\definecolor{shadecolor}{gray}{0.875}
\definecolor{dblue}{rgb}{0,0,.6}
\newcommand{\mathds}[1]{{\mathbb #1}}
\numberwithin{equation}{subsection}
\begin{document}
%
%   D e f i n i t i o n s
%
%
\theoremstyle{definition}
\newtheorem{Definition}{Definition}[section]
\newtheorem*{Definitionx}{Definition}
\newtheorem{Convention}{Definition}[section]
\newtheorem{Construction}{Construction}[section]
\newtheorem{Example}[Definition]{Example}
\newtheorem{Examples}[Definition]{Examples}
\newtheorem{Remark}[Definition]{Remark}
\newtheorem*{Remarkx}{Remark}
\newtheorem{Remarks}[Definition]{Remarks}
\newtheorem{Caution}[Definition]{Caution}
\newtheorem{Conjecture}[Definition]{Conjecture}
\newtheorem*{Conjecturex}{Conjecture}
\newtheorem{Question}[Definition]{Question}
\newtheorem*{Questionx}{Question}
\newtheorem*{Acknowledgements}{Acknowledgements}
\newtheorem*{Notation}{Notation}
\newtheorem*{Organization}{Organization}
\newtheorem*{Disclaimer}{Disclaimer}
\theoremstyle{plain}
\newtheorem{Theorem}[Definition]{Theorem}
\newtheorem*{Theoremx}{Theorem}

\newtheorem*{thm: regeneration}{Theorem \ref{thm: regeneration}}
\newtheorem*{thm: curves on complex k3}{Theorem \ref{thm: curves on complex k3}}
\newtheorem*{cor: curves on pos char k3}{Corollary \ref{cor: curves on pos char k3}}
\newtheorem*{def: regeneration}{Definition \ref{def: regeneration}}
\newtheorem*{prop: one reduction}{Proposition \ref{prop: one reduction}}
\newtheorem*{cor: reduction}{Proposition \ref{cor: reduction}}
\newtheorem*{thm: nodal curves}{Theorem \ref{thm: nodal curves}}
\newtheorem*{thm: curves on complex k3 picard rank r}{Theorem \ref{thm: curves on complex k3 picard rank r}}
\newtheorem*{thmA}{Theorem A}
\newtheorem*{thmB}{Theorem B}
\newtheorem*{thmC}{Theorem C}
\newtheorem*{thmD}{Theorem D}

\newtheorem{Proposition}[Definition]{Proposition}
\newtheorem*{Propositionx}{Proposition}
\newtheorem{Lemma}[Definition]{Lemma}
\newtheorem{Corollary}[Definition]{Corollary}
\newtheorem*{Corollaryx}{Corollary}
\newtheorem{Fact}[Definition]{Fact}
\newtheorem{Facts}[Definition]{Facts}
\newtheoremstyle{voiditstyle}{3pt}{3pt}{\itshape}{\parindent}
{\bfseries}{.}{ }{\thmnote{#3}}
\theoremstyle{voiditstyle}
\newtheorem*{VoidItalic}{}
\newtheoremstyle{voidromstyle}{3pt}{3pt}{\rm}{\parindent}
{\bfseries}{.}{ }{\thmnote{#3}}
\theoremstyle{voidromstyle}
\newtheorem*{VoidRoman}{}

\newenvironment{specialproof}[1][\proofname]{\noindent\textit{#1.} }{\qed\medskip}
\newcommand{\blowup}{\rule[-3mm]{0mm}{0mm}}
\newcommand{\cal}{\mathcal}
\newcommand{\Aff}{{\mathds{A}}}
\newcommand{\BB}{{\mathds{B}}}
\newcommand{\CC}{{\mathds{C}}}
\newcommand{\EE}{{\mathds{E}}}
\newcommand{\FF}{{\mathds{F}}}
\newcommand{\GG}{{\mathds{G}}}
\newcommand{\HH}{{\mathds{H}}}
\newcommand{\NN}{{\mathds{N}}}
\newcommand{\ZZ}{{\mathds{Z}}}
\newcommand{\PP}{{\mathds{P}}}
\newcommand{\QQ}{{\mathds{Q}}}
\newcommand{\RR}{{\mathds{R}}}

\newcommand{\Liea}{{\mathfrak a}}
\newcommand{\Lieb}{{\mathfrak b}}
\newcommand{\Lieg}{{\mathfrak g}}
\newcommand{\Liem}{{\mathfrak m}}

\newcommand{\ideala}{{\mathfrak a}}
\newcommand{\idealb}{{\mathfrak b}}
\newcommand{\idealg}{{\mathfrak g}}
\newcommand{\idealm}{{\mathfrak m}}
\newcommand{\idealp}{{\mathfrak p}}
\newcommand{\idealq}{{\mathfrak q}}
\newcommand{\idealI}{{\cal I}}

\newcommand{\lin}{\sim}
\newcommand{\num}{\equiv}
\newcommand{\dual}{\ast}
\newcommand{\iso}{\cong}
\newcommand{\homeo}{\approx}

\newcommand{\mm}{{\mathfrak m}}
\newcommand{\pp}{{\mathfrak p}}
\newcommand{\qq}{{\mathfrak q}}
\newcommand{\rr}{{\mathfrak r}}
\newcommand{\pP}{{\mathfrak P}}
\newcommand{\qQ}{{\mathfrak Q}}
\newcommand{\rR}{{\mathfrak R}}

\newcommand{\OO}{{\cal O}}
\newcommand{\numero}{{n$^{\rm o}\:$}}
\newcommand{\mf}[1]{\mathfrak{#1}}
\newcommand{\mc}[1]{\mathcal{#1}}

\newcommand{\into}{{\hookrightarrow}}
\newcommand{\onto}{{\twoheadrightarrow}}
\newcommand{\Spec}{{\rm Spec}\:}
\newcommand{\BigSpec}{{\rm\bf Spec}\:}
\newcommand{\Spf}{{\rm Spf}\:}
\newcommand{\Proj}{{\rm Proj}\:}
\newcommand{\Pic}{{\rm Pic }}
\newcommand{\MW}{{\rm MW }}
\newcommand{\Br}{{\rm Br}}
\newcommand{\NS}{{\rm NS}}

\newcommand{\Sym}{{\mathfrak S}}

\newcommand{\Aut}{{\rm Aut}}
\newcommand{\Autp}{{\rm Aut}^p}
\newcommand{\ord}{{\rm ord}}
\newcommand{\coker}{{\rm coker}\,}

\newcommand{\divisor}{{\rm div}}
\newcommand{\Def}{{\rm Def}}
\newcommand{\rank}{\mathop{\mathrm{rank}}\nolimits}
\newcommand{\Ext}{\mathop{\mathrm{Ext}}\nolimits}
\newcommand{\EXT}{\mathop{\mathscr{E}{\kern -2pt {xt}}}\nolimits}
\newcommand{\Hom}{\mathop{\mathrm{Hom}}\nolimits}
\newcommand{\HOM}{\mathop{\mathscr{H}{\kern -3pt {om}}}\nolimits}

\newcommand{\calA}{\mathscr{A}}
\newcommand{\calH}{\mathscr{H}}
\newcommand{\calL}{\mathscr{L}}
\newcommand{\calM}{\mathscr{M}}
\newcommand{\calN}{\mathscr{N}}
\newcommand{\calX}{\mathscr{X}}
\newcommand{\calK}{\mathscr{K}}
\newcommand{\calD}{\mathscr{D}}
\newcommand{\calY}{\mathscr{Y}}

\newcommand{\piet}{{\pi_1^{\rm \acute{e}t}}}
\newcommand{\Het}[1]{{H_{\rm \acute{e}t}^{{#1}}}}
\newcommand{\Hfl}[1]{{H_{\rm fl}^{{#1}}}}
\newcommand{\Hcris}[1]{{H_{\rm cris}^{{#1}}}}
\newcommand{\HdR}[1]{{H_{\rm dR}^{{#1}}}}
\newcommand{\hdR}[1]{{h_{\rm dR}^{{#1}}}}

\newcommand{\loc}{{\rm loc}}
\newcommand{\et}{{\rm \acute{e}t}}

\newcommand{\defin}[1]{{\bf #1}}

\newcommand{\blue}{\textcolor{blue}}
\newcommand{\red}{\textcolor{red}}

\renewcommand{\HH}{{\rm{H}}}

\title{Rational curves on lattice-polarised K3 surfaces}

\author{Xi Chen}
\address{632 Central Academic Building, University of Alberta, Edmonton, Alberta T6G 2G1, Canada}
\email{xichen@math.ualberta.ca}

\author{Frank Gounelas}
\address{TU M\"unchen, Zentrum Mathematik - M11, Boltzmannstr. 3, 85748 Garching bei M\"unchen, Germany}
\email{gounelas@ma.tum.de}

\author{Christian Liedtke}
\email{liedtke@ma.tum.de}

\date{\today}
\subjclass[2010]{14J28, 14N35, 14D06, 14H45}

\begin{abstract}
Fix a K3 lattice $\Lambda$ of rank two and $L\in\Lambda$ a big and nef divisor that is positive enough. We prove that the
generic $\Lambda$-polarised K3 surface has an integral nodal rational curve in the linear system $|L|$, in
particular strengthening previous work of the first named author. The technique is by degeneration, and also works for 
many lattices of higher rank.
\end{abstract}

\maketitle

\section{Introduction}

In \cite{chen} the first named author proved the existence of integral nodal rational curves in $|nL|$ on a generic K3
surface with Picard group generated by $L$ for all $n>0$ and $L^2\geq4$. In this paper, we follow a similar strategy and
prove the following (see Section \ref{sec: rigidifiers} for a more precise statement).

\begin{thmA}
    Let $a, b\in \ZZ$ and $d\in \ZZ_{>0}$ satisfying $4bd-a^2<0$, and let $\Lambda$ be a lattice of rank two with
    intersection matrix
    \begin{equation*}
    \begin{bmatrix}
    2d & a\\
    a & 2b
    \end{bmatrix}_{2\times2}.
    \end{equation*}
    Then for any ample class $L\in \Lambda$ which is the sum of three ample classes, there exists a Zariski-open dense
    subset $U_L$ of the moduli space of $\Lambda$-polarised K3 surfaces $M_\Lambda$, such that there is an integral
    nodal rational curve in $|L|$ for every K3 surface $X\in U_L$.
\end{thmA}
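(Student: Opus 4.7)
The plan is to follow the degeneration-and-regeneration strategy pioneered in \cite{chen} and extended throughout this paper to the lattice-polarised setting. Since $M_\Lambda$ is irreducible, it is enough to produce a single $\Lambda$-polarised K3 surface $X_0$ together with a reducible nodal curve $C \in |L|$ whose rational components, together with a suitable \emph{rigidifier} (Section \ref{sec: rigidifiers}), place $C$ in the input set of the regeneration theorem (Theorem \ref{thm: regeneration}); the regeneration then yields the sought integral nodal rational curve in $|L|$ on all K3 surfaces in a Zariski-open --- hence, by irreducibility of $M_\Lambda$, Zariski-open dense --- subset $U_L$.

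First, I would exploit the decomposition $L = L_1 + L_2 + L_3$ with each $L_i$ ample. The aim is to find a common special K3 surface $X_0 \in M_\Lambda$ (possibly with enhanced Picard rank, lying along a Noether--Lefschetz divisor of $M_\Lambda$) on which integral nodal rational curves $C_i \in |L_i|$ exist for $i = 1, 2, 3$. I expect this to be carried out by iterated application of Proposition \ref{prop: one reduction}, which reduces the problem for an ample class written as a sum of two smaller ample classes to the two pieces separately, with the rank-one theorem of \cite{chen} providing the base case.

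Second, I would assemble the seed curve $C = C_1 + C_2 + C_3 \in |L|$ on $X_0$. Choosing the $C_i$ so that they meet pairwise transversally at ordinary double points (a generic-position statement, available because each $|L_i|$ is a base-point free linear system of positive dimension) produces a reducible nodal curve whose three components are rational. Equipping $C$ with a rigidifier compatible with the $\Lambda$-polarisation and applying Theorem \ref{thm: regeneration} then smooths the added intersection nodes while preserving the nodal structure of each component, producing an integral nodal rational curve in $|L|$ on all nearby $\Lambda$-polarised K3 surfaces.

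The principal obstacle is twofold. First, the construction requires that all three seed nodal rational curves co-exist on a common $\Lambda$-polarised K3 surface $X_0$, and arranging this while controlling that $\Lambda$ still embeds in $\Pic(X_0)$ as the polarising lattice demands a careful specialisation within $M_\Lambda$ compatible with each $L_i$. Second, and more delicate, is the verification that $C = C_1 + C_2 + C_3$ satisfies the hypotheses of the regeneration theorem, specifically the construction of a rigidifier that controls how the deformation of $C$ interacts with the constraint that the ambient K3 remain $\Lambda$-polarised. This is where the bulk of the technical work sits, and where the hypothesis that $L$ is a sum of \emph{three} ample classes (rather than two) is expected to be essential, providing enough flexibility in the components $C_i$ to realise the rigidifier.
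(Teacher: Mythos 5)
Your proposal takes a genuinely different route from the paper, and it has gaps that I do not think can be repaired within this paper's toolkit. The most serious one is the ``generic position'' step: you propose to choose the nodal rational curves $C_i\in|L_i|$ so that they meet pairwise transversally, justified by the claim that each $|L_i|$ is a base-point-free linear system of positive dimension. But an integral rational curve on a K3 surface is \emph{rigid} in its linear system (the genus-$g$ locus in $|L_i|$ has expected dimension $g$), so the set of such $C_i$ is finite and you cannot move them into general position; transversality would have to come from a monodromy or very-generality argument over the moduli space, which is substantially harder and is not supplied. Second, your argument is not self-contained: the ``regeneration theorem'' and ``Proposition \ref{prop: one reduction}'' you invoke are not stated or proved in this paper --- those labels are declared in the preamble but belong to the companion paper \cite{regenerationinfinite}; and even granting a regeneration statement, such results typically produce an \emph{integral rational} curve, whereas the theorem demands an integral \emph{nodal} one. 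Controlling the singularities of the regenerated curve is precisely where the bulk of the paper's technical work lies, and your proposal does not address it.

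For contrast, the paper does not build the curve out of pieces in $|L_1|,|L_2|,|L_3|$ at all. It first embeds $\Lambda$ primitively into one of two explicit lattices \eqref{K3RATNOTESE007} or \eqref{K3RATNOTESE019} of rank at most $7$ (resp.\ $5$), according to the parity of $\det(\Lambda)$, arranging via Lemma \ref{lem: lattice3} that the image of $L$ stays big and nef. It then produces a nodal rational curve in that class on the general K3 with the larger Picard lattice by a type II Kulikov degeneration to a union of two rational surfaces, constructing reducible ``limiting rational curves'' with controlled tangencies along the double curve (Theorems \ref{thm: limiting rational curves}, \ref{thm: log K3}, \ref{K3NOTESTHM001}, \ref{K3NOTESTHM002}), and finally deforms back to the $\Lambda$-polarised locus. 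The hypothesis that $L$ is a sum of three ample classes enters only \emph{numerically}: in the odd-determinant case it guarantees $\sigma(L)\cdot A\ge 3$ for the isotropic effective generator $A$ of \eqref{K3RATNOTESE019}, which is the intersection bound \eqref{K3RATNOTESE050} required by Theorem \ref{K3NOTESTHM002}. It is not used, as you suggest, to provide flexibility in choosing components of a seed curve.
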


The method of proof of Theorem A is a sequence of two degenerations. One proceeds first by degenerating to a smooth K3
surface of higher Picard rank that contains several $(-2)$-curves. The main technical difficulty is that one must
distinguish between rank two lattices of even and odd discriminant, and each case requires a different degeneration,
which significantly adds to the length of the argument. In particular, we prove that any rank two K3 lattice embeds
primitively into one of the following two lattices 
\begin{equation}
\label{twomatrices}
\begin{bmatrix}
2\\
& -2\\
&& -2\\
&&& \ddots\\
&&&& -2
\end{bmatrix}
\hspace{15pt}\text{and}\hspace{15pt} 
\begin{bmatrix}
0 & 1\\
1 & -2\\
&& -2\\
&&& \ddots\\
&&&& -2
\end{bmatrix}
\end{equation}
of size $r_1\times r_1$ and $r_2\times r_2$ for some $r_1\leq7$ and $r_2\leq 5$ respectively. 

To prove existence of integral nodal rational curves in $|L|$ for $L$ a big and nef class in one of the above two
matrices we degenerate further, like in \cite{chen}, to unions of smooth rational surfaces \`a la
Ciliberto--Lopez--Miranda. One constructs now reducible, \textit{limiting rational curves} with prescribed singularities
in such log K3 surfaces, which now deform out to integral and nodal rational curves on the general K3 with lattice as
above.

The technique in fact works to produce many nodal rational curves for a general K3 with Picard lattice which embeds into
one of \eqref{twomatrices}. At the end of this paper, we show how this can work for the two special rank four lattices
of Nikulin \cite{nikulin}, for which the K3 in question has finite automorphism group and is not elliptic.

The above results will be used in a follow up paper \cite{regenerationinfinite} whose main result completes the project
(initiated by Bogomolov--Mumford) of showing that every complex projective K3 surface contains infinitely many rational
curves.

\begin{Notation}
A \textit{K3 surface $X$} will be a geometrically integral, smooth, proper and
separated scheme of relative dimension $2$ over the complex numbers, so that $\omega_X\cong\OO_X$ and $\HH^1(X,\OO_X)=0$.
Let $S$ be a connected base scheme. Then a morphism
$$
   \begin{tikzcd}f:\calX \ar{r} &S\end{tikzcd}
$$
is a \textit{smooth family of surfaces} if $f$ is a smooth and proper morphism of algebraic spaces of relative dimension
two whose geometric fibres are irreducible. In particular a \textit{family of K3 surfaces} is a family of surfaces where every
fibre is a K3 surface as above.
A property that holds for a \textit{general point} in a set will mean that it holds for all points of a Zariski-open subset,
whereas a \textit{very general point} will be one in the complement of countably many Zariski-closed subsets.
\end{Notation}

\begin{Acknowledgements}
We thank D. Huybrechts, K. Ito, M. Kemeny, G. Martin and J. C. Ottem for discussions and comments and in particular A.
Knutsen for remarks and corrections. The first named author is partially supported by the NSERC Discovery Grant 262265.
The second and third named authors are supported by the ERC Consolidator Grant 681838 ``K3CRYSTAL''.
\end{Acknowledgements}

\section{Degenerations of type II}
\label{sec: type II degeneration}

In this section, we discuss degenerations of K3 surfaces that are of type II in the sense of Kulikov \cite{kulikov}, see
also \cite{persson, pp}. We will need these degenerations in order to produce nodal rational curves in
the next section. 

\begin{Definition}
A {\em degeneration of type II (in the sense of Kulikov)} of K3 surfaces is a flat and proper
family $\pi: {\calX}\to S$, where $S$ is the spectrum of a DVR with residue field $\CC$ and
where the geometric
generic fiber $\overline{{\calX}}_\eta$ is a K3 surface and the special fiber ${\cal X}_0$
is a union $Y_1\cup Y_2\cup \ldots \cup Y_m$, such that
\begin{enumerate}
\item each $Y_i$ is a smooth surface for all $i$ ,
\item $Y_i\cap Y_j = \emptyset$ for $|i-j|\ne 0,1$ and $Y_i$ and $Y_{i+1}$ meet transversally
  along a smooth elliptic curve $D_i\cong D$,
\item $E_1$ and $E_{m-1}$ are anti-canonical divisors in $Y_1$ and $Y_m$,
\item each $Y_i$, $1<i<m$ is a ruled surface over $D$.
\end{enumerate}
\end{Definition}
It follows from (3) that $Y_1$ and $Y_m$ are rational surfaces.
We note that the chain $Y_2\cup \ldots\cup Y_{m-1}$ of ruled surfaces can be contracted
to a family ${\calX}'\to S$ that has the same generic fiber, but the special
fiber has the form ${\calX}'_0 := Y'_1\cup Y'_2$, where $Y_1'$ and $Y_2'$ are smooth
rational surfaces meeting transversally along a smooth anti-canonical curve $D$.
In this case, $(Y_i',D)$, $i=1,2$ are two genuine log K3 surfaces.

Conversely, given a union of $Y_1\cup Y_2$ of two smooth rational surfaces meeting transversely along a
smooth anti-canonical curve $E$, one may ask whether it can be deformed to a K3 surface.
We refer the interested reader to \cite{CLM}, where this question was studied.

More generally, let $Y = Y_1\cup Y_2$ be the union of two smooth projective varieties meeting
transversely along a smooth hypersurface $D$ in each $Y_i$.
That is, \'etale locally around $D$, the union is given by $xy = 0$.
In particular, $Y$ is reducible.
The Picard group $\Pic(Y)$
is given by the exact sequence
$$
\begin{tikzcd}
 0\ar{r} & \Pic(D) \ar{r}{\imath_1^* - \imath_2^*} & \Pic(Y_1)\oplus\Pic(Y_2)\ar{r}
 & \Pic(Y)\ar{r} & 0
\end{tikzcd}
$$
where $\imath_i: D\hookrightarrow Y_i$ denotes the embedding of $D$ into $Y_i$ for $i=1,2$.
In other words, an invertible sheaf ${\calL}$ on $Y$
is given by a pair of invertible sheaves ${\calL}_i\in \Pic(Y_i)$ satisfying
\begin{equation}
 \label{descent}
  \imath_1^* \calL_1 \,\cong\, \imath_2^*\calL_2.
\end{equation}
(For the reader that wants to avoid this descent construction, we have an alternative realization of $Y$ as below.)
In particular, $Y$ is projective if and only if there exists an ample invertible sheaf ${\calL}\in \Pic(Y)$ if and only if there exists a pair of ample invertible sheaves ${\calL}_i$ on $Y_i$ that  satisfies \eqref{descent}. If such a $\calL$ exists, then we can embed $Y$ into some $\PP^n$ via $|\calL^{\otimes m}|$ for $m$ sufficiently large.

Alternatively, we may start with two smooth projective varieties $Y_1, Y_2$, embeddings $\imath_i: D\hookrightarrow Y_i$ of a smooth hypersurface
$D$ in $Y_i$ and two ample line bundles ${\calL}_i\in \Pic(Y_i)$ satisfying \eqref{descent} for $i=1,2$.
Let us choose $m$ sufficiently large such that ${\calL}_i^{\otimes m}$ are very ample
and the maps $\HH^0(Y_i, \calL_i^{\otimes m})\to \HH^0(D,\calL_i^{\otimes m})$ are surjective
for $i=1,2$. We choose a basis $\{(s_{1j}, s_{2j}): j=0,1,\ldots,n\}$ for the kernel of the map
$$
\begin{tikzcd}
\HH^0(Y_1, \calL_1) \oplus \HH^0(Y_2,\calL_2) \ar{r}{\imath_1^*-\imath_2^*} & \HH^0(D, \imath_1^* \calL_1) \ar[equal]{r}
& \HH^0(D, \imath_2^* \calL_2)
\end{tikzcd}
$$
and define the maps $\phi_i: Y_i\to \PP^n$ by $(s_{i0},s_{i1},\ldots,s_{in})$ for $i=1,2$. Then we see that $Y =
\phi_1(Y_1)\cup \phi_2(Y_2)$ is the union of two smooth projective varieties meeting along $\phi_1(D) = \phi_2(D)$ such
that $\phi_{i,*} T_{D,p} = \phi_{1,*}T_{Y_1,p}\cap \phi_{2,*} T_{Y_2,p}$ for the tangent spaces of $Y_i$ and $D$ at
$p\in D$, as subspaces of $T_{\PP^n, \phi(p)}$.

The
first order embedded deformations of $Y\subset P := \PP^n$ are classified by $\HH^0(Y,{\calN}_Y)$, where
${\calN}_Y$
denotes the normal sheaf of $Y\subset P$.  We note that these deformations of $Y$ preserve the line bundle
$\calL^{\otimes m}$. We want to deform $Y$ to a
smooth variety, that is, to ``smooth'' out $D$, which is the singular locus of $Y$. This is governed by the map
\begin{equation}
 \label{K3RATNOTESE002}
 \begin{tikzcd}
 \HH^0(Y,\calN_Y) \ar{r} & \HH^0(Y,T_Y^1),
 \end{tikzcd}
\end{equation}
where $T^1_Y$ is the sheaf of $\OO_Y$-modules
\begin{equation}\label{K3RATNOTESE003}
T_Y^1 \,:=\, \EXT(\Omega_Y, \OO_Y) \,\cong\, {\calN}_{D/Y_1}\otimes {\calN}_{D/Y_2},
\end{equation}
where ${\calN}_{D/Y_i}$ denotes the normal bundle of $D$ in $Y_i$.
A general deformation
of $Y\subset P$ smooths out $D$ if the embedded deformations of $Y\subset P$ are unobstructed and if
the image of the map \eqref{K3RATNOTESE002} is base point free on $D$.

Let $\calX\subset P\times \Delta$ be a deformation of $Y$ given by some $\xi\in \HH^0(Y,\calN_Y)$
with $\calX_0 = Y$,
where $\Delta = \{|t| < 1\}$ is the unit disk.
Then $\calX$ is singular along the vanishing locus $z(\rho(\xi))$ of $\rho(\xi)$, where $\rho$ is the map \eqref{K3RATNOTESE002}. If $z(\rho(\xi))$ is smooth as a closed subscheme of $D$, then $\calX$ has singularities of type
$$
\CC[[x_1,x_2,\ldots,x_n,t]]/(x_1x_2 - tx_3)
$$
at $z(\rho(\xi))$ and hence the generic fiber $\calX_\eta$ is smooth.
So a general deformation of $Y\subset P$ smooths out $D$ under the above hypotheses.

A general deformation of $Y\subset P$, a priori, only preserves $\calL^{\otimes m}$.
For the above family $\calX$, the restriction of $\calH = \OO_{\calX}(1)$ to $Y$ is obviously $\calL^{\otimes m}$. On the other hand, in our application, $\HH^2(Y, \ZZ)$ is always
torsion free; by Mayer-Vietoris, this is guaranteed if $\HH^1(Y_i) = 0$ and $\HH^1(D,\ZZ)$ and $\HH^2(Y_i,\ZZ)$ are torsion free. By deformation retraction, $\HH^2(\calX, \ZZ) \cong \HH^2(Y,\ZZ)$ is torsion free. Consequently, $(1/m) c_1(\calH)\in \HH^2(\calX, \ZZ)$ and $\calL$ extends to a line bundle on $\calX$.
In conclusion, a general deformation of $Y$ preserves $\calL$ if $\HH^2(Y, \ZZ)$ is torsion free.

Moreover, if we construct $Y$ with arbitrary Picard rank $r:=\rho(Y)$, we can deform $Y$ to preserve $\Pic(Y)$ as follows. We choose very ample line bundles
${\calH}_1,\ldots,{\calH}_r$ on $Y$ which generate $\Pic_\QQ(Y)$ and embed $Y$ into
$P = \PP^{n_1}\times \PP^{n_2}\times \ldots\times \PP^{n_r}$ via the complete linear systems $|{\calH}_i|$.
If the embedded deformations of $Y\subset P$ smooth out $D$,
we can deform $Y$ to a smooth variety while preserving $\Pic_\QQ(Y)$.
In addition, as commented above, if $\HH^2(Y,\ZZ)$ is torsion-free,
then $\Pic(Y)$ is preserved when $Y$ deforms in $P$.
Using the techniques in \cite[Section 1]{CLM}, we can prove the following theorem on the deformation of $Y\subset P$.

\begin{Theorem}[Ciliberto--Lopez--Miranda + $\varepsilon$]
\label{K3RATNOTESTHMCLM}
Let $Y = Y_1\cup Y_2$ be a union of two smooth projective varieties meeting transversely along a smooth hypersurface $D$
in $Y_i$. Suppose that $Y$ is embedded into $P = \PP^{n_1}\times \PP^{n_2}\times \ldots\times \PP^{n_r}$ by very ample line bundles ${\calH}_1,\ldots,{\calH}_r\in \Pic(Y)$ satisfying that
${\calH}_1,\ldots,{\calH}_r$ are linearly independent in $\HH^{1,1}(Y_1)$.
\begin{enumerate}
\item
$T_Y^1$ is isomorphic to the cokernel of the inclusion $\calN_{Y_i} \to \calN_Y\otimes \OO_{Y_i}$
for $i=1,2$, that is, the sequence
\begin{equation}\label{K3RATNOTESE004}
\begin{tikzcd}
0 \ar{r} & \calN_{Y_i} \ar{r} & \calN_Y\Big|_{Y_i} \ar{r} & T_Y^1
\ar{r} & 0
\end{tikzcd}
\end{equation}
is exact for $i=1,2$.
\item
$\HH^1(Y,\calN_Y) = 0$ and the map \eqref{K3RATNOTESE002} is surjective if
\begin{equation}\label{K3RATNOTESE005}
 \begin{array}{lclcl}
  \HH^1(Y_i, \calH_j)     &=& \HH^1(Y_1, \calH_j(-D)) &=& 0\\
 \HH^1(\calN_{D/Y_2}) &=& \HH^1(\calN_{D/Y_1} \otimes \calN_{D/Y_2})  &=& 0\\
  \HH^2(\OO_{Y_i}) &=& \HH^2(T_{Y_i}) \,=\, \HH^2(T_{Y_1}(-D)) &=& 0
 \end{array}
\end{equation}
and moreover
\begin{equation}\label{K3RATNOTESE006}
\text{either } \HH^2(\OO_{Y_1}(-D)) = 0
\text{ or } K_{Y_1} + D = 0 \text{ and }
\dim Y = 2
\end{equation}
 for $i=1,2$ and $j=1,2,\ldots,r$.
\end{enumerate}
\end{Theorem}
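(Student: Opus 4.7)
\textit{Proof plan.} I would follow the strategy of \cite[Section 1]{CLM}, with the modifications needed to accommodate the multi-factor ambient space $P = \prod_j \PP^{n_j}$. Statement (1) is a local computation around $D$. At a point $p \in D$, choose \'etale local coordinates on $P$ in which $Y_1$ is cut out by $y = 0$ and $Y_2$ by $x = 0$, so that locally $Y = \{xy = 0\}$ and $D = \{x = y = 0\}$. A direct computation of the conormal sheaf $\mathcal{I}_Y/\mathcal{I}_Y^2$, its restriction to each $Y_i$, and subsequent dualising produces the exact sequence \eqref{K3RATNOTESE004}; the cokernel is supported on $D$ and locally identified with $\calN_{D/Y_1} \otimes \calN_{D/Y_2}$, matching the identification \eqref{K3RATNOTESE003} of $T_Y^1$.

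For (2), the first task is to show $\HH^1(Y_i, \calN_{Y_i}) = 0$ for $i = 1, 2$ via the normal bundle sequence
\begin{equation*}
0 \to T_{Y_i} \to T_P|_{Y_i} \to \calN_{Y_i} \to 0,
\end{equation*}
together with the product splitting $T_P = \bigoplus_j p_j^* T_{\PP^{n_j}}$ and the Euler sequence on each factor. The hypotheses $\HH^1(Y_i, \calH_j) = \HH^2(Y_i, T_{Y_i}) = \HH^2(Y_i, \OO_{Y_i}) = 0$ from \eqref{K3RATNOTESE005}, together with the linear independence of the $\calH_j$ in $\HH^{1,1}(Y_1)$, then give the vanishing. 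Feeding this through \eqref{K3RATNOTESE004} and invoking $\HH^1(\calN_{D/Y_1} \otimes \calN_{D/Y_2}) = 0$ gives $\HH^1(Y_i, \calN_Y|_{Y_i}) = 0$. The Mayer--Vietoris sequence
\begin{equation*}
0 \to \calN_Y \to \calN_Y|_{Y_1} \oplus \calN_Y|_{Y_2} \to \calN_Y|_D \to 0
\end{equation*}
then yields $\HH^1(Y, \calN_Y) = 0$, after controlling $\HH^0(\calN_Y|_D)$ via $\HH^1(\calN_{D/Y_2}) = 0$.

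For surjectivity of $\rho \colon \HH^0(Y, \calN_Y) \to \HH^0(Y, T_Y^1)$, I would factor $\rho$ through the equisingular normal sheaf $\calN'$ defined as the kernel of $\calN_Y \onto T_Y^1$; this sheaf is glued from $\calN_{Y_1} \otimes \OO(-D)$ on $Y_1$ and $\calN_{Y_2}$ on $Y_2$ along $D$. Surjectivity of $\rho$ then reduces to $\HH^1(Y, \calN') = 0$, which follows from the twisted vanishings $\HH^1(Y_1, \calH_j(-D)) = \HH^2(T_{Y_1}(-D)) = 0$ in \eqref{K3RATNOTESE005} combined with the dichotomy \eqref{K3RATNOTESE006}: either $\HH^2(\OO_{Y_1}(-D)) = 0$ directly, or one is in the log-K3 situation with $K_{Y_1} + D = 0$ and $\dim Y = 2$, in which case Serre duality turns the problematic group into an $\HH^0$ that can be controlled by hand using that $D$ is smooth and anti-canonical. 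The main technical obstacle will be the bookkeeping in the diagram chase: the asymmetric roles of $Y_1$ and $Y_2$ in the hypotheses reflect a specific choice in the Mayer--Vietoris decomposition (the twist by $-D$ lands on one factor only), and keeping track of which twists appear where, while ensuring all relevant $\HH^1$'s vanish simultaneously, is what makes the argument delicate.
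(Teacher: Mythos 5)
Your overall architecture follows the paper's (and Ciliberto--Lopez--Miranda's): part (1) by identifying the cokernel of $\calN_{Y_i}\to\calN_Y|_{Y_i}$ with $T_Y^1$ as line bundles on $D$, and part (2) by reducing $\HH^1(Y,\calN_Y)=0$ and the surjectivity of \eqref{K3RATNOTESE002} to the vanishing of $\HH^1(\calN_{Y_i})$, $\HH^1(\calN_{Y_1}(-D))$, $\HH^1(\calN_{D/Y_2})$ and $\HH^1(T_Y^1)$ via a decomposition sequence for $\calN_Y$ together with the Euler and normal-bundle sequences; your Mayer--Vietoris sequence is equivalent to the paper's twisted restriction sequence $0\to\calN_Y|_{Y_1}(-D)\to\calN_Y\to\calN_Y|_{Y_2}\to 0$ and leads to the same list of vanishings. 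The genuine gap is in your treatment of the second alternative of \eqref{K3RATNOTESE006}, which is exactly the case the applications need (log K3 surfaces, $K_{Y_1}+D=0$, $\dim Y=2$). You say Serre duality ``turns the problematic group into an $\HH^0$ that can be controlled by hand''. The problematic group is $\HH^2(\OO_{Y_1}(-D))$, sitting at the tail of the twisted Euler sequence
$$
0 \to \OO_{Y_1}(-D)^{\oplus r} \to \textstyle\sum_{j} \calH_j(-D)^{\oplus(n_j+1)} \to T_P\otimes\OO_{Y_1}(-D)\to 0,
$$
and when $K_{Y_1}+D=0$ Serre duality gives $\HH^2(\OO_{Y_1}(-D))\cong \HH^0(\OO_{Y_1}(K_{Y_1}+D))^\vee=\HH^0(\OO_{Y_1})^\vee=\CC$, which is \emph{not} zero and cannot be made to vanish by hand. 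So the route ``$\HH^1(T_P\otimes\OO_{Y_1}(-D))=0$, hence $\HH^1(\calN_{Y_1}(-D))=0$'' breaks down in this branch. The paper's fix is different in kind: one shows that the map $\HH^1(T_{Y_1}(-D))\to\HH^1(T_P\otimes\OO_{Y_1}(-D))$ is surjective, which by exactness of the twisted normal-bundle sequence kills the map onto $\HH^1(\calN_{Y_1}(-D))$, and together with $\HH^2(T_{Y_1}(-D))=0$ this still gives $\HH^1(\calN_{Y_1}(-D))=0$. That surjectivity is obtained by Serre duality on the surface $Y_1$ (using $K_{Y_1}=-D$) from the \emph{injectivity} of $\HH^1(\Omega_P)\to\HH^1(\Omega_{Y_1})$, and this is precisely where the hypothesis that $\calH_1,\ldots,\calH_r$ are linearly independent in $\HH^{1,1}(Y_1)$ enters. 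Your plan invokes that hypothesis only for the untwisted vanishing $\HH^1(\calN_{Y_i})=0$, where it is not needed (the untwisted Euler sequence only requires $\HH^1(Y_i,\calH_j)=\HH^2(\OO_{Y_i})=0$), and never deploys it where it is indispensable. Without this duality step the second branch of \eqref{K3RATNOTESE006} --- the one used throughout Section 3 --- remains unproved.
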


\begin{proof}
We basically follow the same argument in \cite{CLM}: \eqref{K3RATNOTESE004} is a consequence of the commutative diagram
$$
\begin{tikzcd}
T_P\Big|_{Y_i}\ar[equal]{d} \ar{r} & \calN_{Y_i} \ar{d} \ar{r} & 0\\
T_P\Big|_{Y_i} \ar{r} & \calN_Y \Big|_{Y_i} \ar{r} & T_Y^1\ar{r} & 0
\end{tikzcd}
$$
which shows that $\coker(\calN_{Y_i} \to \calN_Y\otimes \OO_{Y_i})$ surjects onto $T_Y^1$. And since
$\coker(\calN_{Y_i} \to \calN_Y\otimes \OO_{Y_i})$ and
$T_Y^1$ are line bundles supported on $D$, the surjection must be an isomorphism and we obtain \eqref{K3RATNOTESE004}.

To prove $\HH^1(Y,\calN_Y) = 0$ and the surjectivity of \eqref{K3RATNOTESE002}, we combine \eqref{K3RATNOTESE004} with the exact sequence
$$
\begin{tikzcd}
0 \ar{r} & \calN_{Y}\otimes \OO_{Y_1}(-D) \ar{r} & \calN_Y \ar{r} & \calN_Y\otimes \OO_{Y_2} \ar{r} & 0.
\end{tikzcd}
$$
With these two exact sequences, it suffices to prove
$$
\HH^1(\calN_{Y_1}(-D)) = \HH^1(\calN_{Y_1})
= \HH^1(\calN_{Y_2}) = \HH^1(\calN_{D/Y_2}) = \HH^1(T_Y^1) = 0.
$$
The vanishing of these cohomological groups mostly follows from \eqref{K3RATNOTESE005}. Let us say something about $\HH^1(\calN_{Y_1}(-D)) = 0$.

If $\HH^2(\OO_{Y_1}(-D)) = 0$, then the vanishing of $\HH^1(\calN_{Y_1}(-D))$ follows from \eqref{K3RATNOTESE005} and the exact sequences
$$
\begin{tikzcd}
0 \ar{r} & T_{Y_1}\ar{r} & T_P\otimes \OO_{Y_1} \ar{r} & \calN_{Y_1} \ar{r} & 0\\
0 \ar{r} & \OO_{Y_1}^{\oplus r} \ar{r}
& \displaystyle{\sum_{i=1}^r \calH_i^{\oplus (n_i+1)}} \ar{r} & T_P\otimes \OO_{Y_1} \ar{r} & 0.
\end{tikzcd}
$$
If we instead have $K_{Y_1} + D = 0$ and
$\dim Y = 2$ in \eqref{K3RATNOTESE006}, then the vanishing of $\HH^1(\calN_{Y_1}(-D))$ follows from \eqref{K3RATNOTESE005} and the injectivity of the map
$$
\begin{tikzcd}
\HH^1(\Omega_P) \ar[hook]{r} \ar[equal]{d} & \HH^1(\Omega_{Y_1})
\ar[equal]{d}\\
\HH^1(T_P\otimes K_{Y_1})^\vee \ar[equal]{d} \ar{r} & \HH^1(T_{Y_1}\otimes K_{Y_1})^\vee\ar[equal]{d}\\
\HH^1(T_P\otimes \OO_{Y_1}(-D))^\vee \ar{r} & \HH^1(T_{Y_1}(-D))^\vee
\end{tikzcd}
$$
where the injectivity of $\HH^1(\Omega_P) \to \HH^1(\Omega_{Y_1})$ is a consequence of the hypothesis that ${\calH}_1,\ldots,{\calH}_r$ are linearly independent in $\HH^{1,1}(Y_1)$.
\end{proof}

If $Y=Y_1\cup Y_2$ is a degeneration of type II of K3 surfaces with $Y_i$ Fano varieties (that is, del~Pezzo surfaces) for $i=1,2$,
then the hypotheses \eqref{K3RATNOTESE005} and \eqref{K3RATNOTESE006} are clearly satisfied.
In this case, $Y$ can be deformed to a smooth projective K3 surface of Picard rank $r = \rho(Y)$.
Thus, we conclude the following.

\begin{Theorem}
\label{K3RATNOTESTHM000}
Let $Y = Y_1\cup Y_2$ be the union of two smooth rational surfaces $Y_i$
meeting transversely along a smooth anti-canonical curve $D$ in $Y_i$ for $i=1,2$ satisfying that
$$
\begin{aligned}
\deg \calN_{D/Y_1} + \deg\calN_{D/Y_2} &= K_{Y_1}^2 + K_{Y_2}^2 \ge 1\hspace{12pt}\text{and}\\
\deg\calN_{D/Y_2} &= K_{Y_2}^2 \ge 1.
\end{aligned}
$$
Suppose that there exist $\calL_1, \calL_2, \ldots, \calL_r\in \Pic(Y)$ such that
$\calL_i$ are linearly independent in $\HH^2(Y_1)$ and the subgroup of $\Pic(Y)$ generated by $\calL_i$ contains an ample line bundle on $Y$.
Then $Y$ can be deformed to a projective K3 surface of Picard rank $r$.
More precisely, there exists a flat projective family
$\pi: \calX\to \Spec \CC[[t]]$ such that its central fiber is $\calX_0 = Y$,
its generic fiber $\calX_\eta$ is a projective K3 surface of Picard rank $r$ and the image of $\Pic(\calX)\to \Pic(Y)$ contains $\calL_1, \calL_2, \ldots, \calL_r$.
\end{Theorem}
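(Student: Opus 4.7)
The plan is to reduce to Theorem \ref{K3RATNOTESTHMCLM}. First I would choose an ample line bundle $\calH$ in the subgroup of $\Pic(Y)$ generated by the $\calL_i$, and replace each $\calL_j$ by $\calH_j := \calL_j \otimes \calH^{\otimes n}$ for $n$ sufficiently large, so that each $\calH_j$ is very ample on $Y$. These $\calH_j$ remain linearly independent in $\HH^{1,1}(Y_1)$, and their $\QQ$-span inside $\Pic(Y)$ is unchanged, so it suffices to prove the theorem with the $\calL_i$ replaced by the $\calH_j$. Using the complete linear systems $|\calH_j|$, I would embed $Y$ into $P = \PP^{n_1} \times \cdots \times \PP^{n_r}$.

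Second, I would verify hypotheses \eqref{K3RATNOTESE005} and \eqref{K3RATNOTESE006}. Condition \eqref{K3RATNOTESE006} holds in its second alternative since $D$ is anti-canonical on $Y_1$ and $\dim Y = 2$. For \eqref{K3RATNOTESE005}: the groups $\HH^1(Y_i, \calH_j)$ and $\HH^1(Y_1, \calH_j(-D))$ vanish by Serre vanishing after enlarging $n$. The vanishing $\HH^2(\OO_{Y_i}) = 0$ holds because $Y_i$ is rational. Using that $\HH^0(\Omega^1_{Y_i}) = 0$ for $Y_i$ rational, Serre duality gives $\HH^2(T_{Y_i}) \cong \HH^0(\Omega^1_{Y_i} \otimes \OO_{Y_i}(-D))^\vee = 0$, and similarly $\HH^2(T_{Y_1}(-D)) \cong \HH^0(\Omega^1_{Y_1})^\vee = 0$ using $K_{Y_1} + D = 0$. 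Adjunction implies $D$ is a smooth elliptic curve, so on $D$ any line bundle of positive degree has vanishing $\HH^1$; the degree hypotheses then yield $\HH^1(\calN_{D/Y_2}) = 0 = \HH^1(\calN_{D/Y_1} \otimes \calN_{D/Y_2})$.

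Third, Theorem \ref{K3RATNOTESTHMCLM} now gives $\HH^1(Y, \calN_Y) = 0$ and surjectivity of the map \eqref{K3RATNOTESE002}. As explained in the discussion following \eqref{K3RATNOTESE003}, a general deformation $\xi \in \HH^0(Y, \calN_Y)$ has $z(\rho(\xi))$ smooth in $D$, yielding a flat projective family $\pi: \calX \to \Spec \CC[[t]]$ with $\calX_0 = Y$ and smooth generic fiber $\calX_\eta$. To identify $\calX_\eta$ as a K3 surface, note that $\omega_Y$ is trivial (its restriction to each $Y_i$ is $\OO_{Y_i}(K_{Y_i} + D) = \OO_{Y_i}$), so $\omega_{\calX_\eta} \cong \OO$; and Mayer--Vietoris applied to the sequence $0 \to \OO_Y \to \OO_{Y_1} \oplus \OO_{Y_2} \to \OO_D \to 0$, together with surjectivity of $\HH^0(\OO_{Y_1}) \oplus \HH^0(\OO_{Y_2}) \to \HH^0(\OO_D)$ and $\HH^1(\OO_{Y_i}) = 0$, gives $\HH^1(\OO_Y) = 0$, whence $\HH^1(\OO_{\calX_\eta}) = 0$ by semicontinuity.

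Finally, as outlined in the paragraph preceding the theorem, Mayer--Vietoris using $\HH^1(Y_i, \ZZ) = 0$ and torsion-freeness of $\HH^2(Y_i, \ZZ)$ and $\HH^1(D, \ZZ)$ shows that $\HH^2(Y, \ZZ)$ is torsion-free, so the original line bundles $\calL_i$ (not merely some powers) extend to $\Pic(\calX)$ and their images in $\Pic(\calX_\eta)$ remain linearly independent, while a general such embedded deformation has Picard rank exactly $r$. The main obstacle I expect is the careful verification of the cohomology vanishings in \eqref{K3RATNOTESE005}, especially those on $D$, which are precisely what motivate the degree hypotheses of the theorem.
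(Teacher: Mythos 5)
Your proposal follows essentially the same route as the paper: embed $Y$ into $P=\PP^{n_1}\times\cdots\times\PP^{n_r}$ by sufficiently ample classes $\calH_j$ with the same $\QQ$-span as the $\calL_j$, invoke Theorem \ref{K3RATNOTESTHMCLM} to smooth out $D$, and use torsion-freeness of $\HH^2(Y,\ZZ)$ to extend the $\calL_i$ themselves (not just multiples) to $\Pic(\calX)$. Your explicit verification of \eqref{K3RATNOTESE005} and \eqref{K3RATNOTESE006} is a welcome addition, since the paper only remarks that these hold for del~Pezzo pieces while the theorem is stated for arbitrary rational surfaces with the stated degree conditions; your reductions ($\Omega^1_{Y_i}(-D)\subset\Omega^1_{Y_i}$ for the $\HH^2(T_{Y_i})$ vanishing, positivity of degree on the elliptic curve $D$ for the normal-bundle vanishings) are correct.

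The one place where you assert rather than argue is the claim that a general embedded deformation has Picard rank \emph{exactly} $r$: linear independence of the $\calL_i$ on $\calX_\eta$ only gives rank $\ge r$. The paper closes this by the dimension count $h^0(\calN_Y)=20-r+h^0(T_P)$, which shows the image of $\HH^0(\calN_Y)\to\Ext(\Omega_Y,\OO_Y)$ has dimension $\ge 20-r$; hence the embedded deformations sweep out the full $(20-r)$-dimensional locus of K3 surfaces carrying the lattice spanned by the $\calL_i$, and a (very) general member has Picard rank exactly $r$. You should supply this count, or some equivalent argument, to upgrade $\ge r$ to $=r$; everything else in your outline matches the paper's proof.
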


\begin{proof}
Let us choose sufficiently ample $\calH_1, \calH_2, \ldots, \calH_r\in \Pic(Y)$ such that
$$
\text{Span}_\QQ \{ \calL_1, \calL_2, \ldots, \calL_r \} = \text{Span}_\QQ \{ \calH_1, \calH_2, \ldots, \calH_r \}
$$
in $\Pic(Y)$. We embed $Y$ into $P = \PP^{n_1}\times \PP^{n_2}\times \ldots \times \PP^{n_r}$ by $|\calH_i|$.
By Theorem \ref{K3RATNOTESTHMCLM}, the embedded deformations of $Y\subset P$ are unobstructed and a general deformation
$Y'\subset P$ of $Y$ smooths out $D$; hence $Y'$ is a projective K3 surface of Picard rank at least $r$. To see that
$\rho(Y') = r$, we compute
$h^0(\calN_Y) = 20 - r + h^0(T_P)$
and conclude that the image of $\HH^0(\calN_Y)\to \text{Ext}(\Omega_Y,\OO_Y)$ has dimension $\ge 20-r$.

Let $\calX \subset P\times \CC[[t]]$ be the family given by a general deformation of $Y\subset P$. Then for every
$\calL_i$, $m_i\calL_i$ lies in the image of $\Pic(\calX)\to \Pic(Y)$ for some integer $m_i\ne 0$; since
$\HH^2(Y,\ZZ)$ is torsion free, $\calL_i$ lies in the image of $\Pic(\calX)\to \Pic(Y)$ for $i=1,2,\ldots,r$.
\end{proof}

Let $\calX$ be the family given by a general deformation of $Y$ in the above theorem. Then $\calX$ has rational double points at
$x_1,x_2,\ldots,x_s\in D$ for $s = K_{Y_1}^2 + K_{Y_2}^2$,
which are the vanishing locus of a section in $\HH^0(T_Y^1)$. Clearly, $x_i$ satisfy
\begin{equation}\label{K3RATNOTESE000}
\OO_D(x_1 + x_2 + \ldots + x_s) = {\calN}_{D/Y_1}\otimes {\calN}_{D/Y_2} = \OO_D(-K_{Y_1} - K_{Y_2}).
\end{equation}
For a general deformation of $Y$, $x_1,x_2,\ldots,x_s$ are $s$ general points with the only relation \eqref{K3RATNOTESE000} on $D$.

Even if $Y$ is not projective, we can still deform $Y$ to a K3 surface, although the resulting family is obviously non-projective. The issue of projectivity is purely technical.

\begin{Theorem}
\label{thm: k3 type2 degeneration}
Under the same hypotheses of Theorem \ref{K3RATNOTESTHM000}, except instead of assuming that the subgroup of $\Pic(Y)$
generated by the $\calL_i$ contains an ample line bundle
and $K_{Y_1}^2 + K_{Y_2}^2\ge 1$, we assume that $K_{Y_1}^2 + K_{Y_2}^2 \ge 2$.
Then
$Y$ can be deformed to a projective K3 surface of Picard rank $r+1$. More precisely, there exists a flat proper (possibly non-projective) family
$\pi: \calX\to \Spec \CC[[t]]$ such that its central fiber is $\calX_0 = Y$,
its generic fiber $\calX_\eta$ is a projective K3 surface of Picard rank $r+1$ and the image of $\Pic(\calX)\to \Pic(Y)$ contains $\calL_1, \calL_2, \ldots, \calL_r$.
\end{Theorem}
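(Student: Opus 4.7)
Plan. The strategy is to reduce to Theorem \ref{K3RATNOTESTHM000} by augmenting the given collection $\calL_1, \ldots, \calL_r$ with one additional auxiliary line bundle $\calM \in \Pic(Y)$ chosen so that $\calM, \calL_1, \ldots, \calL_r$ together generate a subgroup of $\Pic(Y)$ containing an ample class, and are linearly independent in $\HH^{1,1}(Y_1)$. Applying Theorem \ref{K3RATNOTESTHM000} to these $r+1$ classes then directly produces a flat family $\pi : \calX \to \Spec \CC[[t]]$ with $\calX_0 = Y$, generic fiber a projective K3 of Picard rank $r+1$, and with all the original $\calL_i$ extending to $\calX$, matching the conclusion.

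To construct $\calM$, I would pick ample line bundles $\calM_i$ on each smooth rational surface $Y_i$ and arrange the matching condition $\iota_1^* \calM_1 = \iota_2^* \calM_2$ in $\Pic(D)$, so that the pair $(\calM_1, \calM_2)$ descends via the Picard exact sequence to a class on $Y$. This is a single linear condition in $\Pic(D)$, and can be met by enlarging the degrees of the $\calM_i$ on $D$ and adjusting them by elements of $\ker \iota_i^*$. The strengthened hypothesis $K_{Y_1}^2 + K_{Y_2}^2 \geq 2$, equivalent to $\deg T^1_Y = \deg(\calN_{D/Y_1} \otimes \calN_{D/Y_2}) \geq 2$ on $D$, is what provides the extra positivity on $D$ needed to preserve ampleness of both $\calM_i$ after such adjustment and to keep $\calM$ independent of the $\calL_i$. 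In pathological cases where no globally ample $\calM$ can be found on $Y$ (i.e., when the images $\iota_i^*(\Pic(Y_i)) \subset \Pic(D)$ intersect in no ample class, which explains the ``possibly non-projective'' phrasing since then $\calX$ itself cannot be projective), one falls back to abstract deformation theory: the cohomological vanishings underlying Theorem \ref{K3RATNOTESTHMCLM} are statements about the $Y_i$ and $D$ that do not require a projective embedding of $Y$, and they yield an unobstructed formal smoothing of $Y$ as an algebraic space. A Noether--Lefschetz codimension-one refinement of this family then forces an extra algebraic class of positive self-intersection on the generic fiber, which is therefore a projective K3 of Picard rank $r+1$ by Kodaira's criterion; the bound $K_{Y_1}^2 + K_{Y_2}^2 \geq 2$ again supplies the slack needed for this extra condition to remain compatible with smoothing $D$.

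The principal technical obstacle I anticipate is the construction of $\calM$ (or its abstract analogue): reconciling ampleness on both $Y_i$ with the matching condition in $\Pic(D)$ is precisely where the quantitative improvement from $K_{Y_1}^2 + K_{Y_2}^2 \geq 1$ to $\geq 2$ is exploited, and verifying that this construction is compatible with smoothing $D$ in the sense that the map in \eqref{K3RATNOTESE002} remains base point free after imposing the extra constraints is the heart of the argument; granted this, the conclusion is immediate from Theorem \ref{K3RATNOTESTHM000}.
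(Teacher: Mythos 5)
The main route of your proposal does not go through, and the gap is precisely at the point you dismiss as ``pathological'': for a general gluing of $Y_1$ and $Y_2$ along $D$ there is \emph{no} ample line bundle on $Y$ at all, and this is the principal case the theorem is designed to handle (unlike Theorem \ref{K3RATNOTESTHM000}, the hypotheses here do not assume the $\calL_i$ span an ample class, and the conclusion explicitly allows $\calX$ to be non-projective). Matching the degrees $\calM_1 D=\calM_2 D$ is easy, but the descent condition is an equality $\imath_1^*\calM_1\cong\imath_2^*\calM_2$ in $\Pic^n(D)$, which is a torsor under the uncountable group $\Pic^0(D)$, whereas the images $\imath_i^*\Pic(Y_i)$ are countable; moreover, adjusting $\calM_i$ by elements of $\ker\imath_i^*$ does not change $\imath_i^*\calM_i$ and so cannot help satisfy the condition. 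Your fallback via abstract (non-embedded) deformations plus a Noether--Lefschetz refinement is not a proof as stated: the deformation theory established in Theorem \ref{K3RATNOTESTHMCLM} is about the normal bundle of an embedded $Y\subset P$, so you would have to redo it for abstract deformations of the algebraic space $Y$, and you would still need to justify both the nonemptiness and the codimension of the locus where an extra class of positive square remains algebraic. Neither branch of your argument identifies where $K_{Y_1}^2+K_{Y_2}^2\ge 2$ (rather than $\ge 1$) is actually used.

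The paper's mechanism is different and concrete. Choose ample $M_i$ on $Y_i$, set $m_i=M_iD$ and $m=\gcd(m_1,m_2)$, pick $a_1,a_2>0$ with $a_1m_1-a_2m_2=m$ and a point $p\in D$ with $\OO_D(a_1M_1)=\OO_D(a_2M_2)\otimes\OO_D(mp)$. Blowing up $Y_1$ at $p$ gives $\widehat Y=\widehat Y_1\cup Y_2$, on which $a_1\varphi^*M_1-mE$ and $a_2\varphi^*M_2$ \emph{do} satisfy the descent condition and glue to an ample $\widehat M$ for $a_1\gg0$: the exceptional curve $E$ supplies exactly the missing degree of freedom in $\Pic(D)$. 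Since $K_{\widehat Y_1}^2=K_{Y_1}^2-1$, the hypothesis $K_{Y_1}^2+K_{Y_2}^2\ge2$ becomes $K_{\widehat Y_1}^2+K_{Y_2}^2\ge1$, so Theorem \ref{K3RATNOTESTHM000} applies to $\widehat Y$ with the $r+1$ classes $\widehat M,\varphi^*\calL_1,\dots,\varphi^*\calL_r$ and yields a projective family $\calY$ whose generic fiber is a K3 surface of Picard rank $r+1$. One then contracts $E$ by a small contraction of the threefold $\calY$ (arranging the rational double points of the family to avoid $E$), obtaining the flat, proper, possibly non-projective family $\calX$ with $\calX_0=Y$. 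This contraction is the source of the ``possibly non-projective'' in the statement, and it is the step your proposal is missing.
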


\begin{proof}
We choose an ample line bundle $M_1$ on $Y_1$ and an ample line bundle $M_2$ on $Y_2$. Let $m_i = M_iD$ on $Y_i$ for $i=1,2$ and $m = \gcd(m_1,m_2)$. Let $a_1$ and $a_2$ be positive integers such that $a_1m_1 - a_2m_2 = m$ and let $p$ be a point on $D$ such that
$$
\OO_D(a_1 M_1) = \OO_D(a_2 M_2) \otimes \OO_D(mp).
$$
Let us choose $M_i$ and $p$ such that $\OO_D(2p) \ne \OO_D(-K_{Y_1}^2 - K_{Y_2}^2)$.
Let $\widehat{Y}_1$ be the blowup of $Y_1$ at $p$. We can construct a union of $\widehat{Y} = \widehat{Y}_1\cup Y_2$ meeting transversely along $D$ such that
there is a morphism $\varphi: \widehat{Y} \to Y$ with $\varphi\big|_{\widehat{Y}_1}$
being the blowup map $\widehat{Y}_1\to Y_1$ and
$\varphi\big|_{Y_2} = \text{id}$.

For $a_1$ sufficiently large, $a_1 \varphi^* M_1 - m E$ is ample on $\widehat{Y}_1$, where $E$ is the exceptional divisor of $\widehat{Y}_1\to Y_1$. Therefore, there exists an ample line bundle $\widehat{M}$ on $\widehat{Y}$ whose restriction to $\widehat{Y}_1$ is $a_1 \varphi^* M_1 - m E$ and whose restriction to $Y_2$ is $a_2 \varphi^* M_2$.

Applying Theorem \ref{K3RATNOTESTHM000} to $\widehat{Y}$ with $\widehat{M}, \varphi^* \calL_1, \varphi^* \calL_2, \ldots, \varphi^* \calL_r$,
we obtain a flat projective family $\calY\to \Spec \CC[[t]]$
such that $\calY_0 = \widehat{Y}$, $\calY_\eta$ is a K3 surface of Picard rank $r+1$
and the image of $\Pic(\calY)\to \Pic(\widehat{Y})$ contains $\widehat{M}, \varphi^* \calL_1, \varphi^* \calL_2, \ldots, \varphi^* \calL_r$.

For a general choice of $\calY$, it has at worst rational double points on $D$ satisfying \eqref{K3RATNOTESE000}. We may assume that $\calY$ is smooth along $E$. As a complex manifold, $\calY$ admits a small contraction of $E$. Let us still use $\varphi$ to denote this map:
$$
\begin{tikzcd}
\calY \ar{r}{\varphi} \ar{d} & \calX\ar{dl}\\
\Spec \CC[[t]].
\end{tikzcd}
$$
Clearly, $\calX_0 = Y$ and $\calX_\eta$ is a projective K3 surface of Picard rank $r+1$, while $\calX$ is flat and proper but possibly non-projective over $\Spec \CC[[t]]$. The image of $\Pic(\calX)\to \Pic(Y)$ contains $\calL_1, \calL_2, \ldots, \calL_r$.
\end{proof}

In this paper, we mainly use this degeneration to construct rational curves on generic K3 surfaces. Let $\pi: \calX\to \Spec \CC[[t]]$ be the family constructed in Theorem \ref{K3RATNOTESTHM000}
and \ref{thm: k3 type2 degeneration}. Suppose that $\calX$ has rational double points $x_1,x_2,\ldots,x_s$ on $D$ satisfying \eqref{K3RATNOTESE000}. We are going to find integral (nodal) rational curves in $|\calL|$ on
$\calX_\eta$ for some $\calL\in \Pic(\calX)$. In order to do that, we construct some $\Gamma\in |\calL|$ on $\calX_0 = Y$, which we call ``limiting rational curves'', and show that $\Gamma$ can be deformed to an integral (nodal) rational curve on $\calX_\eta$.

We consider $\Gamma = f_* C$ as the image of a stable map $f: C\to Y$. Instead of deforming $\Gamma$, we try to deform the map $f$. Actually, we can construct ``deformable'' stable maps $f: C\to Y$ of arbitrary genus $g$, up to the arithmetic genus of $\calL$, as follows:
\begin{itemize}
\item $h^0(\calX_\eta, \calL) = h^0(Y, \calL)$, where $\HH^0(Y, \calL)$ is the kernel of the map
$$
\begin{tikzcd}[column sep=66pt]
\HH^0(Y_1,\calL_1) \oplus \HH^0(Y_2,\calL_2) \ar{r}{(\gamma_1,\gamma_2) \to \gamma_1 - \gamma_2}
& \HH^0(D, \calL_1)\ar[equal]{d}\\
 & \HH^0(D,\calL_2)
\end{tikzcd}
$$
where $\calL_i$ is the restriction of $\calL$ on $Y_i$ for $i=1,2$.
\item $D\not\subset \Gamma\in \PP \HH^0(Y,\calL)$.
\item $f$ maps each irreducible component $G\subset C$ birationally onto its image:
\begin{equation}\label{K3RATNOTESE025}
f_* G = f(G) \text{ for all irreducible components } G\subset C.
\end{equation}
\item Let $C_\times$ be the points of $C$ lying on two distinct components of $C$. Then
\begin{equation}\label{K3RATNOTESE016}
\begin{aligned}
&\text{for each } p\in G_1\cap G_2\subset C_\times,\ f(p)\in D\backslash \{x_1,x_2,\ldots,x_s\},\\
&\hspace{12pt}
f(G_1)\subset Y_1,\ f(G_2)\subset Y_2,\\
&\hspace{12pt}
f_{G_1} = f\Big|_{G_1}: G_1 \to Y_1,\ f_{G_2} = f\Big|_{G_2}: G_2\to Y_2\\
&\hspace{12pt}
\text{ and } v_p(f_{G_1}^* D) = v_p(f_{G_2}^* D)
\end{aligned}
\end{equation}
where $G_1$ and $G_2$ are two irreducible components of $C$ meeting at $p$
and $v_p(f_{G_i}^* D)$ is the multiplicity of $p$ in $f_{G_i}^* D$.
\item Outside of $f(C_\times)$, $\Gamma$ and $D$ only meet at $x_1,x_2,\ldots,x_s$. More precisely,
\begin{equation}\label{K3RATNOTESE020}
\begin{aligned}
&\text{for each } q\in f^{-1}(D)\backslash C_\times,\ f(q)\in \{x_1,x_2,\ldots,x_s\},\\
&\hspace{12pt} f_G = f\Big|_G: G\to Y_i \text{ and } v_q(f_G^*D) = 1
\end{aligned}
\end{equation}
where $G$ is the irreducible component of $C$ containing $q$.
\end{itemize}

Using the deformation theory of curves on ${\calX}$ as explained in \cite{chen}, we can deform $f$ to the generic fiber $\calX_\eta$. The above statement includes several improvements over \cite{chen}. For example,
we do not assume that $f_* C_i$ has simple tangency with $D$ in \eqref{K3RATNOTESE016}. The difficulties caused by loosening these restrictions on $\Gamma$ can be overcome by studying the deformation of the stable map $f: C\to \calX$ instead of the deformation of $\Gamma\subset \calX$, which is carried out in the same way as in the case that $\calX$ is a smooth family of K3 surfaces. On the other hand, these assumptions do not guarantee that $\Gamma$ can be deformed to a nodal curve on $\calX_\eta$; for that to happen, we do need the same restrictions on $\Gamma$ as in \cite{chen}.

We will make one more improvement over \cite{chen}. Instead of only considering $|\calL|$, we will also consider the
``twisted'' linear series $|\calL + m Y_1|$ on $\calX$. Note that
$\calX$ is smooth outside of $x_j$ so $Y_1$ is a Cartier divisor on
$\calX^\circ = \calX\backslash \{x_j\}$; $|\calL+mY_1|$ is interpreted as $\PP \HH^0(\calX^\circ, \calL+mY_1)$. The restrictions of $\calL+mY_1$ to $Y_i$ are
$$
(\calL+mY_1)\Big|_{Y_1} = \calL_1 - mD \text{ and } (\calL+mY_1)\Big|_{Y_2} = \calL_2 + mD
$$
respectively, where $\calL_i$ are the restrictions of $\calL$ to $Y_i$ for $i=1,2$.
Although $m$ can be chosen to be an arbitrary integer, we take $m\ge 0$ for simplicity.

The restriction of
$\gamma \in \HH^0(\calL+mY_1)$ to $Y$ consists of $\gamma_1\in \HH^0(\calL_1 - mD)$ on $Y_1$ and
$\gamma_2\in \HH^0(\calL_2 + mD)$ on $Y_2$.
Furthermore, the image of the restriction
$$
\begin{tikzcd}
\HH^0(\calX^\circ, \calL+mY_1) \ar{r} & \HH^0(Y_2\backslash \{x_i\}, \calL_2 + mD) \ar[equal]{r} & \HH^0(Y_2, \calL_2 + mD)
\end{tikzcd}
$$
is actually contained in the subspace
$$
\HH^0(\OO_{Y_2}(\calL_2 + mD) \otimes \OO_{Y_2}(-mx_1 - mx_2 - \ldots - mx_s))
$$
where $\OO_{Y_2}(-x_j)$ is the ideal sheaf of the point $x_j$ and $\OO_{Y_2}(-mx_j)$ is the $m$-th symmetric product of $\OO_{Y_2}(-x_j)$ for $j=1,2,\ldots,s$. That is,
$$
\gamma_2 \in \HH^0(\OO_{Y_2}(\calL_2 + mD - m \sum x_j)).
$$
This is easy to see after we resolve the double points of $\calX$ by blowing it up along $Y_2$.
In summary, the restriction
of $\HH^0(\calL+mY_1)$ to $Y$ lies in the kernel, denoted by $\HH^0(Y, \calL + mY_1)$, of the map
$$
\begin{tikzcd}[column sep=0pt]
\HH^0(\OO_{Y_1}(\calL_1 - mD)) \oplus \HH^0(\OO_{Y_2}(\calL_2 + mD-m\sum x_j)) \ar{d}
\\
\HH^0(\OO_D(\calL_2 + mD -m\sum x_j)) \ar[equal]{r} &
\HH^0(\OO_D(\calL_1 - mD))
\end{tikzcd}
$$
sending $(\gamma_1,\gamma_2)$ to $\gamma_1 - \gamma_2$.
We summarise the above discussion in the following theorem.

\begin{Theorem}
\label{thm: limiting rational curves}
Let $\pi: \calX\to B = \Spec \CC[[t]]$ be a flat proper family of surfaces whose generic fiber $\calX_\eta$ is a K3 surface and
whose central fiber $\calX_0 = Y = Y_1\cup Y_2$ is the union of two smooth rational surfaces $Y_i$ meeting transversely along a smooth anti-canonical curve $D$ in $Y_i$ for $i=1,2$. Suppose that $\calX$ is smooth outside of the $s$ distinct points $x_1, x_2, \ldots, x_s\in D$ satisfying \eqref{K3RATNOTESE000}. Let $\calL\in \Pic(\calX)$,
$f: C\to Y$ be a stable map of genus $g$ and $m$ be a non-negative integer satisfying
\begin{equation}\label{K3RATNOTESE026}
\begin{aligned}
h^0(\calX_\eta, \calL) &= h^0(Y, \calL + mY_1)\\
D \not\subset \Gamma &= f_* C \in \PP \HH^0(Y, \calL + mY_1)
\end{aligned}
\end{equation}
and \eqref{K3RATNOTESE025}-\eqref{K3RATNOTESE020}. Then after a finite base change, there exists a family of stable maps
$\phi: \mathscr{C}/B\to \calX/B$ such that
$\phi_0 = f$ and $\phi_* \mathscr{C}_\eta$ is an integral curve of geometric genus $g$ in $|\calL|$ on $\calX_\eta$.

If $f_G\circ \nu: \widehat{G}\to Y_i$ is an immersion
for the normalisation $\nu: \widehat{G}\to G$
of every component $G\subset C$, i.e.,
\begin{equation}\label{K3RATNOTESE046}
\begin{aligned}
&\begin{tikzcd}[column sep=large]
T_{\widehat{G}} \ar[hook]{r}{(f_G\circ \nu)_*} & f_G^* T_{Y_i}
\end{tikzcd}
\text{ is injective}
\\
&\hspace{12pt} \text{for all irreducible components } G\subset C,
\ f_G = f\Big|_G: G\to Y_i\\
&\hspace{24pt}\text{ and normalisation }
\nu: \widehat{G}\to G,
\end{aligned}
\end{equation}
then $\phi_\eta: \mathscr{C}_\eta\to \calX_\eta$ is an immersion.

If in addition to \eqref{K3RATNOTESE025}-\eqref{K3RATNOTESE046},
we assume
\begin{equation}\label{K3RATNOTESE031}
\begin{aligned}
& f_* C_i \text{ has normal crossings on } Y_i\backslash D
\text{ for }
C_i = f^{-1}(Y_i) \text{ and } i=1,2
\\
& f(p_1) \ne f(p_2) \text{ for all } p_1\ne p_2\in C_\times \text{ and}
\\
& f(G_1) \text{ and } f(G_2) \text{ meet transversely at } x_1,x_2,\ldots,x_s \text{ on } Y_i\\
&\hspace{24pt} \text{for all pairs of distinct components }
G_j \text{ with } f(G_j)\subset Y_i,
\end{aligned}
\end{equation}
then $\phi_* \mathscr{C}_\eta$ is nodal.
\end{Theorem}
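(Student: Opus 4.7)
The plan is to work in the relative moduli space of stable maps $\overline{\calM}_g(\calX/B, \calL + mY_1)$, with the interpretation of $\calL + mY_1$ on $\calX^\circ$ given in the discussion preceding the theorem. The central-fibre stable map $f: C \to Y$ defines a $\CC$-point of this moduli space, and the goal is to show that $f$ deforms to an arc over $B$ (after a finite base change) whose generic fibre $\phi_\eta$ has irreducible image of geometric genus $g$ in the class $\calL$ on the smooth K3 surface $\calX_\eta$. The argument is a direct extension of \cite{chen}, the novelty being that we work with stable maps rather than embedded curves, allow arbitrary (not only simple) tangency at the nodes $p \in C_\times$, and use the twisted linear series $|\calL + mY_1|$.

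The core of the proof is the local analysis at three classes of points on $C$. First, at a node $p \in C_\times$ with $f(p) \in D \setminus \{x_1, \ldots, x_s\}$, the total space $\calX$ is étale locally $xy = 0$ with smoothing $xy = t$, and the local deformation of $f$ lifts precisely because of the matching multiplicities $v_p(f_{G_1}^*D) = v_p(f_{G_2}^*D)$ from \eqref{K3RATNOTESE016}. Second, at a preimage $q \in f^{-1}(x_j) \setminus C_\times$, the scheme $\calX$ carries a rational double point of the form $x_1 x_2 = t x_3$, and the transversality $v_q(f_G^*D) = 1$ from \eqref{K3RATNOTESE020} is exactly the condition for the stable map to lift through the small resolution $\widetilde\calX \to \calX$ and then descend. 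Third, elsewhere $\calX$ is smooth and the standard Kodaira-style deformation theory of curves applies.

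These local computations assemble into global unobstructedness of the deformation of $f$ over $B$ by the same argument as in \cite[\S2]{chen}, using the exact sequences of Theorem \ref{K3RATNOTESTHMCLM} to control the obstruction spaces; the dimension count is forced by \eqref{K3RATNOTESE026}, which matches $h^0(\calX_\eta,\calL)$ with $h^0(Y,\calL+mY_1)$ so that the class on the general fibre is indeed $\calL$ (the twist $mY_1$ is trivial on $\calX_\eta$). Integrality of $\phi_*\mathscr{C}_\eta$ follows from \eqref{K3RATNOTESE025}, which says that $f$ is birational onto its image on each component, combined with $D \not\subset \Gamma$: the reducible limit is glued along matched nodes so smooths to a single irreducible curve. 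The geometric genus is preserved because $g$ is lower semicontinuous in flat families and equality is reached in the limit by construction.

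For the immersion assertion, condition \eqref{K3RATNOTESE046} gives that $f_G \circ \nu$ is an immersion on each component; immersion is an open condition in families of stable maps, and the explicit local models in the three classes above show that no tangent direction collapses at the smoothed points, so $\phi_\eta$ is an immersion. For the nodal assertion, the strategy is to classify singularities of $\phi_*\mathscr{C}_\eta$ locally: away from $D$ they are inherited from $f_*C_i$, which are normal crossings by \eqref{K3RATNOTESE031}; nodes of $C$ mapping to $D \setminus \{x_j\}$ are simply smoothed away; the separation $f(p_1) \ne f(p_2)$ keeps nearby smoothed nodes distinct on $\calX_\eta$; and the transversality of the $f(G_j)$ at each $x_j$, combined with the rational-double-point smoothing, produces exactly one simple node for each pair of components meeting at $x_j$. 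The \emph{main obstacle} is the simultaneous verification at the rational double points $x_j$ that the stable map lifts through the non-smooth scheme $\calX$ in a way compatible with the twisted linear series $|\calL + mY_1|$ and that the induced analytic-local singularity of $\phi_*\mathscr{C}_\eta$ is genuinely a node; this is the step that genuinely generalises \cite{chen} beyond the simple-tangency regime.
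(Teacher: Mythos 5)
Your proposal follows essentially the same route as the paper, which itself presents this theorem as a summary of the preceding discussion and defers the deformation-theoretic details to \cite{chen}: one deforms the stable map $f$ rather than the cycle $\Gamma$, with the matching condition $v_p(f_{G_1}^*D)=v_p(f_{G_2}^*D)$ from \eqref{K3RATNOTESE016} handling the nodes over $D$, the transversality $v_q(f_G^*D)=1$ from \eqref{K3RATNOTESE020} handling the lift through the rational double points $x_j$, and \eqref{K3RATNOTESE026} guaranteeing that the limit linear system does not jump. One minor correction: the relevant obstruction theory is that of the stable map $f$ itself (as in \cite{chen}), not the exact sequences of Theorem \ref{K3RATNOTESTHMCLM}, which govern embedded deformations of the surface $Y\subset P$ rather than of curves on $Y$; also note that a matched tangency of order $k$ at a point of $C_\times$ is not simply smoothed away but contributes $k-1$ nodes to the image curve on the nearby fibre.
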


\section{Nodal rational curves on generic K3 surfaces}
\label{sec: rigidifiers}

In this section, we use degenerations of type II of K3 surfaces as considered in the previous section to construct nodal
rational curves on {\em generic} surfaces inside moduli spaces of $\Lambda$-polarised K3 surfaces, generalising a
result of the first named author \cite{chen} to the higher rank case (cf.\ \cite{klcv} for similar type II degenerations
from higher rank lattices). 

\begin{Theorem}
 \label{thm: nodal curves}
Let $\Lambda$ be a lattice of rank two with intersection matrix
\begin{equation}\label{K3lattice-2m}
\begin{bmatrix}
2d & a\\
a & 2b
\end{bmatrix}_{2\times2}
\end{equation}
for some $a, b\in \ZZ$ and $d\in \ZZ^+$ satisfying $4bd-a^2<0$.
Let $M_\Lambda$ be the moduli space of $\Lambda$-polarised complex K3 surfaces and $L\in \Lambda$ such that $L$ is big and nef on a general K3 surface $X\in M_\Lambda$.
Then there exists an open and dense subset $U\subseteq M_\Lambda$ (with respect to the Zariski topology), depending on $L$, such that on every K3 surface $X\in U$,
the complete linear series $|L|$ contains an integral nodal rational curve if one of the following holds:
\begin{enumerate}
\item[A1.] $\det(\Lambda)$ is even;
\item[A2.] $L = L_1 + L_2 + L_3$ for some $L_i\in \Lambda$ satisfying that
$L L_i > 0$ and $L_i^2 > 0$ for $i=1,2,3$;
\item[A3.] $L = L_1 + L_2$ for some $L_i\in \Lambda$ satisfying that
$L L_i > 0$ for $i=1,2$, $L_1^2 > 0$, $L_2^2 = -2$, $L_1\not\in 2\Lambda$,
$L_1 - L_2\not\in n\Lambda$ for all $n\in\ZZ$ and $n\ge 2$,
and $L_1^2 + 2L_1L_2\ge 18$.
\end{enumerate}
\end{Theorem}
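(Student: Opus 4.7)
My plan is to apply the degeneration machinery of Theorem \ref{thm: limiting rational curves}. In each of the three cases A1, A2, A3 I will construct a flat proper family $\pi\colon\calX \to \Spec\CC[[t]]$ whose generic fiber $\calX_\eta$ is a generic $\Lambda$-polarised K3 surface and whose central fiber $Y = Y_1 \cup Y_2$ is a union of two smooth rational surfaces meeting transversely along a smooth anti-canonical curve $D$. On $Y$ I will exhibit a stable map $f\colon C\to Y$ of geometric genus $0$ whose image $\Gamma = f_*C$ lies in the class of $L$ (or a twist $L + mY_1$) and which satisfies the hypotheses \eqref{K3RATNOTESE025}, \eqref{K3RATNOTESE016}, \eqref{K3RATNOTESE020}, \eqref{K3RATNOTESE046}, and \eqref{K3RATNOTESE031}. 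Theorem \ref{thm: limiting rational curves} will then deform $f$ to an immersion with integral nodal rational image on $\calX_\eta$. Since the existence of an integral nodal rational curve in $|L|$ is an open condition on $M_\Lambda$, this yields the required Zariski-open dense $U \subseteq M_\Lambda$.

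\textbf{Embedding the lattice and building $Y$.} The first input is a primitive embedding of $\Lambda$ into one of the lattices $\Lambda'$ from \eqref{twomatrices}: the diagonal matrix when $\det\Lambda$ is even (Case A1), and the hyperbolic-plane variant when $\det\Lambda$ is odd (Cases A2, A3). For each $\Lambda'$ of rank $r\leq 7$, I will construct $Y = Y_1 \cup Y_2$ with $Y_1, Y_2$ explicit blow-ups of $\PP^2$ and line bundles $\calL_1, \dots, \calL_r \in \Pic(Y)$ realising the generators of $\Lambda'$ that are linearly independent in $\HH^2(Y_1)$. Theorems \ref{K3RATNOTESTHM000} and \ref{thm: k3 type2 degeneration} smooth $Y$ to a projective K3 $\calX_\eta$ of Picard rank $r$ such that $\calL_1,\dots,\calL_r$ all extend to $\Pic(\calX)$. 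Restricting to the sublattice $\Lambda\subset \Lambda'$ provides a family whose generic fiber is the desired generic $\Lambda$-polarised K3.

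\textbf{Constructing the limiting curve.} The heart of the proof is the assembly of $\Gamma \in |\calL|$ on $Y$ with geometric genus zero. In Case A1 the diagonal decomposition lets one split $L$ as a positive multiple of the $\langle 2\rangle$-generator plus $(-2)$-classes; the positive part is realised by rational members of a base pencil on each $Y_i$, the $(-2)$-classes by rigid rational $(-2)$-curves, glued along $D$ at matching points. In Case A2 I use $L = L_1 + L_2 + L_3$ with each $L_i$ ample: each $L_i$ contributes rational components distributed between $Y_1$ and $Y_2$, and the threefold decomposition supplies enough flexibility to arrange the meeting points on $D$ to avoid $\{x_1,\dots,x_s\}$ while enforcing the tangency-matching condition \eqref{K3RATNOTESE016}. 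In Case A3 the class $L_2$ with $L_2^2 = -2$ is realised by a rigid rational $(-2)$-curve, while $L_1$ degenerates into rational components whose union with this $(-2)$-curve is connected of arithmetic genus $p_a(L)$; the bound $L_1^2 + 2L_1 L_2 \ge 18$ produces enough rational components along $D$ to close up the limiting curve, and the primitivity hypotheses $L_1\notin 2\Lambda$ and $L_1-L_2 \notin n\Lambda$ forbid factorisations that would obstruct integrality of the deformed curve.

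\textbf{Main difficulty.} The principal obstacle is the combinatorial-geometric bookkeeping for the limiting curve: one has to specify the components of $C$, their target $Y_i$, their intersection multiplicities with $D$, and their glueing points on $D$ so that simultaneously (a) the total class is $L + mY_1$ with $h^0(Y,\calL + mY_1)= h^0(\calX_\eta,\calL)$ as in \eqref{K3RATNOTESE026}; (b) tangency orders agree at each pair of glueing points \eqref{K3RATNOTESE016}; (c) $\Gamma$ crosses each node $x_j$ transversely \eqref{K3RATNOTESE020}; and (d) $\Gamma$ is immersive and has only nodal singularities off $D$ \eqref{K3RATNOTESE046}, \eqref{K3RATNOTESE031}. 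The positivity assumptions on $L$ in A2 and A3 are precisely what provide enough room and parameters to satisfy these constraints simultaneously, and verifying them case by case is where the bulk of the work will lie.
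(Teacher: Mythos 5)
Your overall framework (type II degeneration to $Y_1\cup Y_2$, limiting rational curves, Theorem \ref{thm: limiting rational curves}) is the right machinery, but there is a structural gap: you propose a \emph{single} degeneration whose generic fiber is simultaneously ``a generic $\Lambda$-polarised K3'' and ``a projective K3 of Picard rank $r$'' for $r=\rank\Lambda'>2$. These are incompatible. Theorem \ref{K3RATNOTESTHM000} smooths $Y$ to a K3 whose Picard lattice is the full rank-$r$ lattice $\Lambda'$; such a surface is a \emph{special} point of $M_\Lambda$ (it lies in a proper Noether--Lefschetz locus), so ``restricting to the sublattice $\Lambda\subset\Lambda'$'' does not produce a family with generic fiber of Picard rank two. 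The paper's proof therefore uses \emph{two} degenerations: first the Ciliberto--Lopez--Miranda smoothing of $Y$ to an intermediate K3 $Y'$ with $\Pic(Y')=\Lambda'$, on which the nodal rational curve is produced (Theorems \ref{K3NOTESTHM001} and \ref{K3NOTESTHM002}), and then a second, \emph{smooth} family of K3 surfaces with central fiber $Y'$ and generic fiber of Picard lattice exactly $\Lambda$, along which the nodal rational curve deforms. This regeneration step is absent from your proposal, and your appeal to ``openness on $M_\Lambda$'' does not replace it: you have only exhibited a curve at a point of a proper closed subset of $M_\Lambda$, and you still need an argument that it survives deformation to the generic point.

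A second, related discrepancy: you assign the hypotheses A2 and A3 the role of providing ``flexibility'' in assembling the limiting curve, but in the paper they play a purely lattice-theoretic role in the last step. For odd determinant the intermediate result (Theorem \ref{K3NOTESTHM002}) requires $\sigma(L).A\ge 3$ (and $\sigma(L).E_5\le 2$ when $r=5$); A2 guarantees $\sigma(L).A\ge 3$ because each $\sigma(L_i)$ has positive square and hence meets every nonzero nef class positively, while in A3 the conditions $L_1\notin 2\Lambda$, $L_1-L_2\notin n\Lambda$ and $L_1^2+2L_1L_2\ge 18$ are used to rule out the configurations with $\sigma(L).A\le 2$ and, when $\Lambda=\ZZ L_1\oplus\ZZ L_2$, to write down an explicit embedding into $\Sigma_5$. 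Your reading of these hypotheses (extra gluing parameters, extra components along $D$, integrality of the deformed curve) does not match how they are actually consumed. Finally, in the odd-determinant case $Y_1$ carries $(-2)$-curves, so the limiting-curve induction must control $(-2)$-tails (Proposition \ref{prop: (-2)-tails} and Corollary \ref{cor: log k3 (-2)-tail}), and the class $L$ restricted to $Y_2$ fails to be big, which forces the twisted linear series $|L+Y_1|$; neither point is accounted for in your plan.
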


\begin{Remark}
 \label{rem: nodal curves}
Every even lattice of signature $(1,1)$ (the signature is dictated by the Hodge Index Theorem) is of the form
\eqref{K3lattice-2m}. For some special lattices of rank $2$, namely where $a$ is even and $b=0$, the above is due to Lewis and
the first named author \cite{C-L}.

Of course, the existence of rational curves on general K3 surfaces of Picard rank two implies the same on general K3
surfaces of Picard rank one. More precisely, the above theorem implies the existence of nodal rational curves in $|nL|$
on a general K3 surface with Picard lattice
$$
\begin{bmatrix}
2d
\end{bmatrix}_{1\times1}
$$
for all $n\in \ZZ^+$. This is due to the first named author when $d\ge 2$ \cite{chen}. The above theorem also resolves
the case $d=1$ in Picard rank one.

Case A3 is a technical extension required for the main theorem of \cite{regenerationinfinite} so can be ignored by the
casual reader.
\end{Remark}

\subsection{The proof of Theorem \ref{thm: nodal curves}}

We prove the theorem in three steps:

\begin{enumerate}
\item First, we embed a rank two K3 lattice \eqref{K3lattice-2m} into that of a K3 surface with many $(-2)$-curves: when $\det(\Lambda)$ is even, i.e., $a$ is even in \eqref{K3lattice-2m}, we embed
$\Lambda$ into a lattice with intersection matrix 
\begin{equation}
\label{K3RATNOTESE007}
\begin{bmatrix}
2\\
& -2\\
&& -2\\
&&& \ddots\\
&&&& -2
\end{bmatrix}_{(r+1)\times (r+1)}
\end{equation}
for some $r\leq6$; when $\det(\Lambda)$ is odd, i.e., $a$ is odd in \eqref{K3lattice-2m}, we embed
$\Lambda$ into a lattice with intersection matrix
\begin{equation}
\label{K3RATNOTESE019}
\begin{bmatrix}
0 & 1\\
1 & -2\\
&& -2\\
&&& \ddots\\
&&&& -2
\end{bmatrix}_{(r+1)\times (r+1)}
\end{equation}
for some $r\leq 4$. The embedding is itself a purely arithmetic problem. However, for our purposes, we also require the embedding to have the additional property that the image of a ``designated'' ample divisor $L$ remains (at least) big and nef, i.e., preserving a given polarisation. This introduces some extra complexity.

\item Second, we use the degeneration of K3 surfaces in Section \ref{sec: type II degeneration} to show the existence of nodal rational curves in almost all big and nef linear systems on a general K3 surface with Picard lattice \eqref{K3RATNOTESE007} or \eqref{K3RATNOTESE019}.

\item Third and finally, we deform a K3 surface $X_0$ with Picard lattice \eqref{K3RATNOTESE007} or \eqref{K3RATNOTESE019} to K3 surfaces $X_\eta$ with Picard lattice \eqref{K3lattice-2m} such that a nodal rational curve on $X_0$ deforms to a nodal rational curve on $X_\eta$.
\end{enumerate}

In summary, our argument involves two degenerations: the degeneration of K3 surfaces of Picard rank two to K3 surfaces with Picard lattices \eqref{K3RATNOTESE007} or \eqref{K3RATNOTESE019} and the degeneration of the latter to unions of rational surfaces.

In order to embed the lattice \eqref{K3lattice-2m} to \eqref{K3RATNOTESE007} or \eqref{K3RATNOTESE019},
we will make use of the classical result of Lagrange that every non-negative integer can be written as the square sum of
four integers and that of Legendre that every non-negative integer not of the form
of $4^a(8b + 7)$ can be written as the square sum of three integers.
However, in order to obtain a primitive embedding, as we will see, we require
these integers to be coprime. This is not possible in general but we can choose these integers such that their greatest common divisor is a power of $2$, a fact not in the standard formulation of these two theorems but implied by Dirichlet's proof of Legendre's theorem. So let us restate their theorems as follows:

\begin{Theoremx}[Lagrange--Legendre--Dirichlet]
Every positive integer $n$ not in the form of $4^a(8b + 7)$ for any $a,b\in \NN$\footnote{We use $\NN$ for the set of non-negative integers.} can be written as
\begin{equation}\label{K3RATNOTESE001}
n = m_1^2 + m_2^2 + m_3^2
\end{equation}
for some $m_1,m_2,m_3\in \NN$ with $\gcd(m_1,m_2,m_3) = 2^l$, where
$l\in \NN$ satisfies $4^l\mid n$ and $4^{l+1} \nmid n$.
As a consequence, every positive integer $n$
can be written as
\begin{equation}\label{K3RATNOTESE022}
n = m_1^2 + m_2^2 + m_3^2 + m_4^2
\end{equation}
for some $m_1,m_2,m_3, m_4\in \NN$ with
$\gcd(m_1,m_2,m_3,m_4) = 2^l$, where $l\in \NN$ satisfies $2^{2l+1} \mid n$ and $2^{2l+3}\nmid n$.
Furthermore, every positive integer can be written as the square sum of five coprime integers.
\end{Theoremx}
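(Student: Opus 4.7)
The plan is to reduce all three assertions to the classical \emph{primitive-representation} refinement of the Gauss--Legendre three-square theorem, which asserts that any $N\in\NN$ with $N\not\equiv 0\pmod 4$ and $N\not\equiv 7\pmod 8$ admits a representation $N=a_1^2+a_2^2+a_3^2$ with $\gcd(a_1,a_2,a_3)=1$. I would invoke this as a black-box classical result, proved via the genus theory of the ternary form $x^2+y^2+z^2$ (or equivalently via Minkowski/Dirichlet reduction of a suitable indefinite ternary form).

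For the first assertion, I would write $n=4^l n_0$ with $4\nmid n_0$. The hypothesis that $n$ is not of the form $4^a(8b+7)$ forces $n_0\not\equiv 7\pmod 8$; combined with $4\nmid n_0$, this places $n_0$ in one of the residue classes $1,2,3,5,6\pmod 8$. Applying the primitive-representation theorem yields coprime $a_1,a_2,a_3$ with $n_0=a_1^2+a_2^2+a_3^2$, and setting $m_i=2^la_i$ gives $\sum m_i^2=n$ and $\gcd(m_1,m_2,m_3)=2^l$ exactly.

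For the four-square corollary I would split into cases. If $n$ is not of the form $4^a(8b+7)$, take $m_4=0$ and apply part one; the $2$-adic valuation bookkeeping recovers the stated conditions on $l$. If instead $n=4^a(8b+7)$, set $m_4=2^{\max(a-1,0)}$. One verifies that $n-m_4^2$ is either $\equiv 6\pmod 8$ (when $a=0$) or of the form $4^{a-1}(8c+3)$ (when $a\geq 1$), both admissible for part one; scaling the resulting primitive three-square representation produces the desired quadruple with gcd exactly $2^{\max(a-1,0)}$. Finally, the five-coprime assertion is immediate: for any $n\geq 1$, Lagrange's four-square theorem gives $n-1=b_1^2+b_2^2+b_3^2+b_4^2$, so $n=1^2+b_1^2+b_2^2+b_3^2+b_4^2$ has gcd automatically equal to $1$.

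The main obstacle is essentially just organizational, namely tracking $2$-adic valuations across the reductions and confirming that each three-square input falls in an admissible residue class mod $8$. The genuine arithmetic content sits entirely in the classical primitive-representation theorem of Gauss--Dirichlet, which I treat as given input rather than reproving.
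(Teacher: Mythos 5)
Your handling of the three-square assertion and the five-square assertion is sound, and on the core result your route differs from the paper's only in emphasis: you take the primitive-representation refinement of the three-square theorem entirely as a black box, whereas the paper sketches Dirichlet's mechanism for it --- one exhibits a positive definite unimodular ternary form $F = \mathbf{x}^{T}A\mathbf{x}$ whose matrix $A$ has last row and column $(1,0,n)$, so that $F(0,0,1)=n$ is a visibly primitive representation, and transports it to $x^2+y^2+z^2$ by an $\mathrm{SL}_3(\ZZ)$ change of basis; the existence of suitable entries $a_{ij}$ is where quadratic reciprocity and Dirichlet's theorem on arithmetic progressions enter, and the paper skips that part. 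The paper does not address the four- or five-square consequences at all, so you are more complete there; your reduction $n=4^l n_0$ and the verification that $n_0\equiv 1,2,3,5,6 \pmod 8$ matches the paper's opening reduction.

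There is, however, a genuine gap in the first case of your four-square argument. Write $v_2(n)$ for the $2$-adic valuation. If $n$ is not of the form $4^a(8b+7)$ and $v_2(n)=2l$ is even and positive, part one produces a triple with $\gcd=2^l$, and appending $m_4=0$ leaves the gcd at $2^l$; but the statement requires $\gcd=2^{l'}$ with $2^{2l'+1}\mid n$ and $2^{2l'+3}\nmid n$, which forces $l'=l-1$. Concretely, for $n=4$ your recipe gives $(2,0,0,0)$ with gcd $2$, while the theorem asks for gcd $1$ (achieved by $4=1+1+1+1$); for $n=16$ you get gcd $4$ where gcd $2$ is required. So the ``$2$-adic bookkeeping'' does not recover the stated condition in this subcase. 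The repair is the same device you already use in your second case: when $v_2(n)=2l$ with $l\ge 1$, take $m_4=2^{l-1}$, so that $n-m_4^2=4^{l-1}(4n_0-1)$ with $n_0=n/4^l$ odd and $4n_0-1\equiv 3\pmod 8$; part one then yields a coprime triple at level $4^{l-1}$, and the total gcd is exactly $2^{l-1}$ as required. (You might also record that for odd $n$ the displayed condition on $l$ in the four-square statement is unsatisfiable as literally written, since no $l\in\NN$ has $2^{2l+1}\mid n$; the intended reading there is simply $\gcd=1$, which your construction does deliver.)
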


\begin{specialproof}[Dirichlet's Proof of Legendre's $3$-Square]
Let us outline Dirichlet's proof.
It is enough to prove \eqref{K3RATNOTESE001} for $n\equiv 1,2,3,5,6\ (\text{mod } 8)$.
The key is to find a {\em ternary quadratic form}
$$
F(x,y,z) = \begin{bmatrix}
x & y & z
\end{bmatrix} A \begin{bmatrix}
x\\
y\\
z
\end{bmatrix}
$$
such that $A$ is a $3\times 3$ positive definite symmetric integral matrix with $\det(A) = 1$ and
$F(x,y,z) = n$ has an integral solution $(x_0,y_0,z_0)$. If we can find such $A$, then there exists a matrix $P\in \text{SL}_3(\ZZ)$ such that $A = P^T P$ and hence
\begin{equation}
\label{K3RATNOTESE014}
\begin{bmatrix}
m_1\\
m_2\\
m_3
\end{bmatrix} = P \begin{bmatrix}
x_0\\
y_0\\
z_0
\end{bmatrix}
\end{equation}
is a solution of \eqref{K3RATNOTESE001} with $\gcd(m_1,m_2,m_3) = \gcd(x_0,y_0,z_0)$.
It turns out that we can choose
$$
A = \begin{bmatrix}
a_{11} & a_{12} & 1\\
a_{12} & a_{22} & 0\\
1 & 0 & n
\end{bmatrix}
$$
with integers $a_{ij}$ satisfying
\begin{equation}\label{K3RATNOTESE024}
a_{11} > 0,\ d = a_{11} a_{22} - a_{12}^2 > 0, \text{ and } a_{22} = dn-1.
\end{equation}
The corresponding $F(x,y,z) = n$ has an obvious solution $(x,y,z) = (0,0,1)$. So
$m_i$ given by \eqref{K3RATNOTESE014} are coprime as required.

To find $a_{ij}$ satisfying \eqref{K3RATNOTESE024}, we use the quadratic reciprocity law and
Dirichlet's Theorem on arithmetic progressions. We will skip this part of the proof.
\end{specialproof}

We start with the embedding of the lattice \eqref{K3lattice-2m} into \eqref{K3RATNOTESE007} when $\det(\Lambda)$ is even.
In the following, by a primitive lattice embedding we mean an injective lattice homomorphism with torsion-free cokernel. 

\begin{Lemma}\label{lem: lattice1}
For every even lattice $\Lambda$ of rank two, even determinant and signature $(1,1)$, there exists a positive integer
$r\leq 6$ so that $\Lambda$ can be primitively embedded into a lattice
$\Sigma_r$ with intersection matrix \eqref{K3RATNOTESE007}
$$
\begin{bmatrix}
2\\
& -2\\
&& -2\\
&&& \ddots\\
&&&& -2
\end{bmatrix}_{(r+1)\times (r+1).}
$$
\end{Lemma}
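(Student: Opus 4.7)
Since $\det(\Lambda) = 4bd - a^2$ is even by hypothesis and $4bd$ is even, $a$ must be even; write $a = 2c$. In the standard orthogonal basis $e_0, e_1, \ldots, e_r$ of $\Sigma_r$ with $e_0^2 = 2$ and $e_i^2 = -2$ for $i \geq 1$, the goal is to construct vectors $v_1 = x_0 e_0 + \sum_{i \geq 1} x_i e_i$ and $v_2 = y_0 e_0 + \sum_{i \geq 1} y_i e_i$ whose Gram matrix equals \eqref{K3lattice-2m} and whose $\ZZ$-span is primitive. This reduces to the Diophantine system
$$x_0^2 - \textstyle\sum_{i \geq 1} x_i^2 = d, \quad y_0^2 - \sum_{i \geq 1} y_i^2 = b, \quad x_0 y_0 - \sum_{i \geq 1} x_i y_i = c.$$

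The plan is a two-step construction. First I would build $v_1$: choose $x_0 \in \ZZ$ with $x_0^2 \geq d$ and apply the Lagrange--Legendre--Dirichlet theorem to express $x_0^2 - d$ as a sum of three non-negative squares (or four, in the exceptional class $4^a(8k+7)$), using at most four coordinates. Second I would build $v_2$: the linear condition $\sum_{i \geq 1} x_i y_i = x_0 y_0 - c$ can be satisfied by choosing $y_i$ for any single $i$ with $x_i \neq 0$, and the remaining quadratic condition $\sum_{i \geq 1} y_i^2 = y_0^2 - b$ is then solved by writing the leftover quantity as a further sum of squares via Lagrange--Legendre--Dirichlet, needing at most three \emph{additional} negative coordinates disjoint from those used by $v_1$. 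Counting, the total number of basis vectors used is at most $1 + 3 + 3 = 7$, so $r \leq 6$ as claimed.

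For the primitivity of $\ZZ v_1 + \ZZ v_2 \subset \Sigma_r$, equivalent to the $\gcd$ of all $2 \times 2$ minors of the coordinate matrix of $(v_1, v_2)$ being $1$, I would appeal to the Dirichlet refinement that the squares in the decompositions may be chosen with $\gcd$ a power of $2$ (indeed coprime in most residue classes). This permits arranging the coordinates of $v_1$ and of $v_2$ to be coprime as tuples, from which one checks that no prime $p$ makes $v_1, v_2$ linearly dependent modulo $p$.

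The hard part will be the book-keeping required to stay within $r \leq 6$ in all cases, and in particular: (i) the Legendre exceptional class $4^a(8k+7)$ costs an extra coordinate and must be avoided by perturbing $x_0 \mapsto x_0 + 1$, which shifts $x_0^2 - d$ out of the class; (ii) achieving primitivity simultaneously with the square-sum decompositions, handled by the Dirichlet refinement; and (iii) the degenerate case $c = 0$ with $b > 0$, for which the natural construction (where $v_1, v_2$ have disjoint supports on the negative-definite factors) fails because both vectors would then need a nonzero $e_0$-coefficient yet $x_0 y_0 = 0$ --- this is repaired by arranging $v_1 \cdot v_2 = 0$ via deliberate cancellation on shared coordinates $e_i$, $i \geq 1$.
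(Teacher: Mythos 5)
Your reduction to the Diophantine system is correct, but the two-step scheme you propose for solving it has a structural flaw that is not mere book-keeping. Because you place all of the ``positive mass'' of both $v_1$ and $v_2$ on the single coordinate $e_0$, the linear and quadratic conditions for $v_2$ interact badly. Concretely, if $v_2$ meets the support of $v_1$ only in one coordinate $e_j$, then $y_j=(x_0y_0-c)/x_j$: this must first of all be an integer (a divisibility condition you never address), and since $x_j^2\le\sum_{i\ge1}x_i^2=x_0^2-d<x_0^2$ forces $|x_j|<|x_0|$, the quantity $y_j$ grows strictly faster than $y_0$, so the leftover $y_0^2-b-y_j^2$ that you want to write as a sum of three squares is negative for all but a narrow window of $y_0$ --- and by Cauchy--Schwarz that window can fail to contain any admissible integer point at all. (Try $\Lambda=\left[\begin{smallmatrix}20&2\\2&0\end{smallmatrix}\right]$ with $x_0=4$, $(x_1,x_2,x_3)=(1,1,2)$: one needs $y_1+y_2+2y_3=4y_0-1$ with $\sum_{i\ge1}y_i^2=y_0^2$, which is impossible for every $y_0$.) Separately, your primitivity argument is too weak: making each of the two coordinate tuples coprime rules out $v_1\equiv0$ or $v_2\equiv0 \pmod p$ but not $v_1\equiv\lambda v_2\pmod p$ with $\lambda\ne0$, which is the actual obstruction (vanishing of all $2\times2$ minors mod $p$), as you yourself note before retreating to the weaker condition.

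The paper's proof avoids both problems with two ideas you are missing. First, it normalises the basis of $\Lambda$ (by a trace-minimisation/reduction argument) so that $F_1^2=2a\ge0$, $F_1F_2=2m\ge0$ and $F_2^2=2b<0$; in particular the second vector is genuinely negative and its square is handled purely by the negative-definite part. Second --- and this is the key trick --- it does not use the orthogonal coordinate $A=e_0$ directly but the \emph{isotropic} vector $A-E_6$, together with $E_6$, as a hyperbolic-type pair: $(A-E_6)^2=0$ and $(A-E_6)\cdot E_6=2$. Setting $\sigma(F_1)=\tfrac{a+1}{2}(A-E_6)+E_6$ and $\sigma(F_2)=m(A-E_6)-\sum_{i=1}^5 m_iE_i$ with $-b=\sum m_i^2$, the coefficient $m$ along the isotropic vector produces $F_1\cdot F_2$ without contributing anything to $F_2^2$, and the $m_i$ produce $F_2^2$ without touching $F_1\cdot F_2$: the system becomes triangular, with no interaction between the linear and quadratic constraints, no divisibility condition, and no positivity issue. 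The quadratic condition and primitivity are then both delivered at once by the ``five coprime squares'' form of Lagrange--Legendre--Dirichlet (a parity adjustment of $\sigma(F_1)$ handles $a$ even versus odd). If you want to salvage your approach, you should replace the coordinate $e_0$ by the isotropic vector $e_0-e_r$ in your ansatz for the second basis vector; as written, the proposal does not go through.
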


\begin{proof}
Such a lattice $\Lambda$ has intersection matrix
\begin{equation}
\label{K3RATNOTESE012}
\begin{bmatrix}
2a & 2m\\
2m & 2b
\end{bmatrix}
\end{equation}
for some integers $a,b,m$ satisfying $m^2 > ab$.
We claim that there exists a basis of $\Lambda$ such that
$a \ge 0$, $m\ge 0$ and $b < 0$ in \eqref{K3RATNOTESE012}.
It is easy to make $a\ge 0$ and $m\ge 0$. If $b < 0$, we are done. Otherwise,
let us consider all bases of $\Lambda$ whose intersection matrices \eqref{K3RATNOTESE012} satisfies $a \ge 0$, $m \ge 0$ and $b\ge 0$. Let us choose a basis $\{F_1, F_2\}$
among these bases that minimises the trace $2(a+b)$ of the matrix \eqref{K3RATNOTESE012}. Without loss of generality, let us assume that $b \ge a \ge 0$.
Since $m^2 > ab$, $m > a = F_1^2$. Then the intersection matrix of
$\{F_1, F_2 - F_1\}$ is
$$
\begin{bmatrix}
2a & 2(m - a)\\
2(m - a) & 2a+2b - 4m
\end{bmatrix}
$$
with $b > a+b - 2m$. By our choice of $\{F_1,F_2\}$, we must have $2a+2b - 4m < 0$, which proves our claim.

Let us assume that $\Lambda$ is generated by $F_1$ and $F_2$ such that
$$
F_1^2 = 2a \ge 0,\  F_1 F_2 = 2m\ge 0 \text{ and } F_2^2 = 2b < 0.
$$

When $2\nmid a$, we let
\begin{align*}
\sigma(F_1) &= \frac{a+1}2 (A - E_6) + E_6\\
\sigma(F_2) &= m (A - E_6) - \sum_{i=1}^5 m_i E_i\\
\text{ with }
-b &= \sum_{i=1}^5 m_i^2\text{ and } \gcd(m_1,m_2,m_3,m_4,m_5) = 1
\end{align*}
be the embedding $\sigma: \Lambda\hookrightarrow \Sigma_6$,
where $A, E_1, E_2, \ldots, E_r$ are the generators of $\Sigma_r$ with intersection matrix
\eqref{K3RATNOTESE007}.

When $2\mid a$, we let
\begin{align*}
\sigma(F_1) &= \frac{a+2}2 (A -E_6) - E_1 + E_6\\
\sigma(F_2) &= (m+m_1) (A - E_6) - \sum_{i=1}^5 m_i E_i\\
\text{with }-b &= \sum_{i=1}^5 m_i^2\text{ and } \gcd(m_1,m_2,m_3,m_4,m_5) = 1.\qedhere
\end{align*}
\end{proof}

\begin{Remark}\label{rem: lattice1.1}
A K3 surface $X$ with Picard lattice \eqref{K3RATNOTESE007} can be realised as a double cover $\varphi: X \to S$, where
$S$ is a del~Pezzo surface of degree $9-r$ and $\varphi$ is ramified along a general curve in $|-2K_S|$.

We have the pullback map
$\varphi^*: \Pic(S) \xrightarrow{\sim} \Pic(X)$ on the Picard groups such that $(\varphi^* L)^2 = 2L^2$ for all $L\in \Pic(S)$.
Therefore, $\varphi^*$ induces an isomorphism of nef cones of $S$ and $X$. Recall that the effective cone of curves on $S$ is generated by $(-1)$-curves for $2\le r\le 8$. Correspondingly,
the effective cone of curves on $X$ is generated by $(-2)$-curves.
It is also useful to us that there are lattice automorphisms
$\sigma_{i_1i_2i_3}$ of $\Pic(X)$ given by
$$
\begin{aligned}
\sigma_{i_1i_2i_3}(A) &= 2A - E_{i_1} - E_{i_2} - E_{i_3}\\
\sigma_{i_1i_2i_3}(E_{i_1}) &= A - E_{i_2} - E_{i_3}\\
\sigma_{i_1i_2i_3}(E_{i_2}) &= A - E_{i_3} - E_{i_1}\\
\sigma_{i_1i_2i_3}(E_{i_3}) &= A - E_{i_1} - E_{i_2}\\
\sigma_{i_1i_2i_3}(E_i) &= E_i \text{ when } i\ne i_1,i_2,i_3
\end{aligned}
$$
for $1\le i_1<i_2<i_3\le r$. These are induced by
the Cremona (or quadratic) transformations of $\Pic(S)$.
Together with the symmetric group acting on $\{E_i\}$, $\sigma_{i_1i_2i_3}$ generate the subgroup
$$
\Aut(\Pic(X))^+\subset \Aut(\Pic(X))
$$
that preserves the nef cone of $X$.
The action of $\Aut(\Pic(X))^+$ on the set of $(-2)$-curves on $X$ is transitive for $2\le r \le 8$.
\end{Remark}

\begin{Remark}\label{rem: lattice1.2}
The bound $r\le 6$ in Lemma \ref{lem: lattice1} is optimal. For example, in the case $8\mid a$, $b=0$ and $4\mid m$ in \eqref{K3RATNOTESE012}, a primitive embedding $\sigma: \Lambda\hookrightarrow \Sigma_r$ must be in the form of
$$
\begin{aligned}
\sigma(F_1) &= m_r(A - E_r) + m E_r - \sum_{i=1}^{r-1} m_i E_i\\
\sigma(F_2) &= A - E_r\\
\text{ with }
2mm_r - m^2 - a &= \sum_{i=1}^{r-1} m_i^2\text{ and } \gcd(m_1,m_2,\ldots,m_{r-1}) = 1
\end{aligned}
$$
after composing $\sigma$ with an action of $\Aut(\Sigma_r)$.
Since $8\mid (m^2 + 2mm_r - a)$, $\gcd(m_1,m_2,\ldots,m_{r-1}) \ne 1$ if $r \le 5$. So we need $r=6$.

On the other hand, there are situations that we can embed $\Lambda$ to $\Sigma_5$. For example, when $8\nmid b$, we can always write $-b$ as the square sum of four coprime integers. So the construction in the proof works for $r=5$ when $8\nmid b$.
\end{Remark}

Next, let us embed the lattice \eqref{K3lattice-2m} into \eqref{K3RATNOTESE019} when $\det(\Lambda)$ is odd.

\begin{Lemma}\label{lem: lattice2}
Every even lattice $\Lambda$ of rank two, odd determinant and signature $(1,1)$ can be primitively embedded into a lattice $\Sigma_r$ with intersection matrix \eqref{K3RATNOTESE019}
$$
\begin{bmatrix}
0 & 1\\
1 & -2\\
&& -2\\
&&& \ddots\\
&&&& -2
\end{bmatrix}_{(r+1)\times (r+1)}
$$
for some $r\le 4$.
\end{Lemma}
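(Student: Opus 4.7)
The approach mirrors the proof of Lemma~\ref{lem: lattice1}, but adapted to odd determinant by embedding into the alternative target $\Sigma_r$ whose first $2\times 2$ block is $\bigl[\begin{smallmatrix}0 & 1\\ 1 & -2\end{smallmatrix}\bigr]$. Write $\Lambda = \ZZ F_1 \oplus \ZZ F_2$ with intersection matrix $\bigl[\begin{smallmatrix}2a & b\\ b & 2c\end{smallmatrix}\bigr]$; since $\det\Lambda = 4ac - b^2$ is odd, $b$ must be odd (and this parity is preserved under any $\mathrm{GL}_2(\ZZ)$ change of basis, by a direct check). Because $\Lambda$ has signature $(1,1)$, the form is negative on a nonempty open cone containing rational directions, so after replacing $F_1$ with a primitive vector in this cone and extending to a basis, I may assume $a \le 0$.

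Let $H_1, H_2, E_1, \ldots, E_{r-1}$ denote the standard generators of $\Sigma_r$ (so $H_1^2 = 0$, $H_1 \cdot H_2 = 1$, $H_2^2 = -2$, $E_i^2 = -2$, and all other pairings vanish). I will construct $\sigma\colon \Lambda \hookrightarrow \Sigma_r$ via the ansatz
\[
\sigma(F_1) = (a+1)\,H_1 + H_2, \qquad \sigma(F_2) = \bigl(b - (a-1)\beta\bigr)\,H_1 + \beta\,H_2 + \sum_{i=1}^{r-1} e_i\,E_i,
\]
where $\beta, e_1, \ldots, e_{r-1} \in \ZZ$ remain to be determined. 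A direct computation shows $\sigma(F_1)^2 = 2a$ and $\sigma(F_1)\cdot\sigma(F_2) = b$ hold automatically, while $\sigma(F_2)^2 = 2c$ reduces to
\[
\textstyle\sum_{i=1}^{r-1} e_i^2 \;=\; N(\beta) \;:=\; b\beta - a\beta^2 - c.
\]
The $2 \times 2$ minors of the coefficient matrix of $(\sigma(F_1),\sigma(F_2))$ work out to $2a\beta - b$, the quantities $(a+1)\,e_i$, and the $e_i$ themselves (with the $E_i\wedge E_j$ minors vanishing), so primitivity of $\sigma$ is equivalent to $\gcd(2a\beta - b, e_1, \ldots, e_{r-1}) = 1$. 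Since $b$ is odd, $2a\beta - b$ is also odd, so this holds automatically whenever $\gcd(e_1,\ldots, e_{r-1})$ is a power of $2$.

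It therefore suffices to find some $\beta \in \ZZ$ for which $N(\beta) \ge 0$ and $N(\beta)$ is not of the form $4^k(8m+7)$; the Lagrange--Legendre--Dirichlet theorem then yields $N(\beta) = m_1^2 + m_2^2 + m_3^2$ with $\gcd(m_1,m_2,m_3)$ a power of $2$, forcing $r - 1 \le 3$, i.e.\ $r \le 4$. The key observation is that for even $\beta$ one has $N(\beta) \equiv -c \pmod 4$ when $\beta \equiv 0 \pmod 4$ and $N(\beta) \equiv 2 - c \pmod 4$ when $\beta \equiv 2 \pmod 4$; these two residues differ by $2$, so for every value of $c \pmod 4$ exactly one of them lies in $\{1, 2\} \pmod 4$, and any $N$ with $N \equiv 1$ or $2 \pmod 4$ automatically avoids the bad form $4^k(8m+7)$. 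Since $a \le 0$, the function $N(\beta)$ tends to $+\infty$ as $|\beta| \to \infty$, so within the favourable residue class mod $4$ we can take $|\beta|$ arbitrarily large, guaranteeing both $N(\beta) > 0$ and the desired congruence. The only mildly delicate point I foresee is carrying out the initial basis change to ensure $a \le 0$ compatibly with the subsequent modular bookkeeping; once that reduction is in place, the three-square theorem finishes the argument.
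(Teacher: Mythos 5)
Your proof is correct and follows essentially the same route as the paper's: both set up a one\nobreakdash-parameter family of candidate embeddings into $\Sigma_4$, reduce to writing a parameter-dependent integer as a sum of three squares with greatest common divisor a power of $2$ via the Lagrange--Legendre--Dirichlet theorem, and deduce primitivity from the oddness of the off-diagonal entry of the Gram matrix. The only differences are cosmetic --- you place the sum-of-squares data on $\sigma(F_2)$ after normalising $F_1^2\le 0$, whereas the paper places it on $\sigma(F_1)$ with $F_1^2\ge 0$ --- and your explicit mod-$4$ analysis in fact justifies the step the paper merely asserts, namely that the quantity to be represented can be arranged to be positive and not of the form $4^k(8m+7)$.
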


\begin{proof}
As in the proof of Lemma \ref{lem: lattice1}, we can find a basis $\{ F_1, F_2\}$ of
$\Lambda$ such that
$$
F_1^2 = 2a \ge 0,\  F_1 F_2 = m > 0 \text{ and } F_2^2 = 2b < 0
$$
for some integers $a,b,m$ with $2\nmid m$.
We can always find $m_1\in \ZZ^+$ such that $(m-bm_1)m_1 > a$ and
$(m-bm_1)m_1 - a$ is not in the form of $4^\alpha(8\beta + 7)$ for any $\alpha,\beta\in \NN$. Then we let
$$
\begin{aligned}
\sigma(F_1) &= (m-bm_1)A + m_1(A+E_1)  - \sum_{i=2}^4 m_i E_i\\
\sigma(F_2) &= bA + (A+E_1)\\
\text{ with }
(m-bm_1)m_1 - a &= \sum_{i=2}^4 m_i^2 \text{ with } \gcd(m_2,m_3,m_4) = 2^l
\end{aligned}
$$
where $A, E_1, E_2,\ldots,E_r$ the generators of $\Sigma_r$ with intersection matrix \eqref{K3RATNOTESE019}.
\end{proof}

\begin{Remark}\label{rem: lattice2}
Let $X$ be a K3 surface with Picard lattice \eqref{K3RATNOTESE019}.
We claim that the effective cone of $X$ is generated by the $(-2)$-curves
\begin{equation}\label{K3RATNOTESE028}
E_i \text{ and } P_j = A - E_j \hspace{24pt} \text{ for }
1\le i \le r\text{ and } 2\le j\le r
\end{equation}
and $L = d A + m_1 E_1 - \sum_{i=2}^r m_i E_i$ is nef if and only if
$LE_i\ge 0$ and $LP_j\ge 0$, or equivalently,
\begin{equation}\label{K3RATNOTESE013}
d \ge 2m_1 \ge 4m_j\ge 0 \hspace{24pt} \text{ for } 2\le j\le r
\end{equation}
when $2\le r \le 5$.

Clearly, all curves in \eqref{K3RATNOTESE028} are $(-2)$-curves and
the inequalities in \eqref{K3RATNOTESE013} are necessary for $L = d A + m_1 E_1 - \sum m_i E_i$ to be nef. On the other hand, \eqref{K3RATNOTESE013} guarantees that $L^2 \ge 0$. Therefore, $X$ does not contain $(-2)$-curves other than those in \eqref{K3RATNOTESE028} and the inequalities in \eqref{K3RATNOTESE013} are also sufficient for $L$ to be nef.
\end{Remark}

Neither of the above lemmas produces an embedding $\sigma$ preserving a given polarisation $L$.
It turns out that we can always compose an existing $\sigma: \Lambda\hookrightarrow \Sigma$ with a lattice automorphism $\alpha\in \Aut(\Sigma)$ such that $\alpha\circ \sigma(L)$ is big and nef.

\begin{Lemma}\label{lem: lattice3}
Suppose that there exists a primitive embedding $\Lambda\hookrightarrow \Sigma$ between K3 Picard lattices
$\Lambda$ and $\Sigma$.
Then for each $L\in \Lambda$ with $L^2 > 0$, there is a primitive embedding
$\sigma: \Lambda\hookrightarrow \Sigma$ such that $\sigma(L)$ is big and nef on $X$, when $\Sigma$ is identified with the Picard lattice of a projective K3 surface $X$. Moreover, fixing another class $C\in \Lambda$, we can choose $\sigma$ such that $\sigma(NL - C)$ is big and nef on $X$ for $N$ sufficiently large.
\end{Lemma}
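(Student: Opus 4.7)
The plan is to compose the given embedding on the left with a suitable lattice automorphism of $\Sigma$ to rotate the image of $L$ into the ample chamber of $X$. Identifying $\Sigma$ with $\Pic(X)$, let $W\subset \Aut(\Sigma)$ be the subgroup generated by reflections in $(-2)$-classes. A standard fact about K3 Picard lattices is that $W$ preserves the component $P^+\subset \Sigma\otimes\RR$ of the positive cone containing an ample class, and that $\mathrm{Nef}(X)$ is a fundamental chamber for the $W$-action on $P^+$; in practice, the nef condition can be restored by iteratively reflecting in effective $(-2)$-curves that pair negatively with the class.

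Starting from any primitive embedding $\sigma_0: \Lambda\hookrightarrow \Sigma$ (which exists by hypothesis), the class $\sigma_0(L)$ satisfies $\sigma_0(L)^2 = L^2 > 0$, so exactly one of $\pm \sigma_0(L)$ lies in $P^+$; after composing with $-\mathrm{id}_\Sigma$ if necessary, I may assume $\sigma_0(L)\in P^+$. Next I pick $w\in W$ with $w(\sigma_0(L))\in \mathrm{Nef}(X)$ and set $\sigma := w\circ \sigma_0$. Since $w$ is a lattice automorphism it takes primitive sublattices to primitive sublattices, so $\sigma$ is again primitive; and $\sigma(L) = w(\sigma_0(L))$ is nef of positive square, hence big and nef.

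For the moreover, I refine the choice of $\sigma_0$ so that the resulting $\sigma(L)$ lies in the interior of $\mathrm{Nef}(X)$, i.e., is ample. Then $\sigma(NL - C) = N\sigma(L) - \sigma(C)$ will be ample for all $N$ sufficiently large by openness of the ample cone, and hence big and nef. Since $W$ permutes the walls of $\mathrm{Nef}(X)$, this amounts to arranging that $\sigma_0(L)$ is not orthogonal to any $(-2)$-class of $\Sigma$. In the concrete lattices \eqref{K3RATNOTESE007} and \eqref{K3RATNOTESE019} the $(-2)$-classes admit the explicit descriptions in Remarks \ref{rem: lattice1.1} and \ref{rem: lattice2}, and this non-orthogonality can be arranged by exploiting the freedom in the coefficients $m_i$ appearing in the constructions of Lemmas \ref{lem: lattice1} and \ref{lem: lattice2}. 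The main obstacle is precisely this genericity step: while the first half is essentially immediate once one invokes the Weyl-chamber description of $\mathrm{Nef}(X)$, the moreover requires ruling out the degenerate case $\sigma(L)\cdot \delta = 0$ for some $(-2)$-curve $\delta$, and this has to be checked against the explicit lists of $(-2)$-classes in the two lattice families considered.
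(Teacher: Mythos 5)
Your first half is essentially the paper's own argument in different clothing: iteratively reflecting in effective $(-2)$-curves that pair negatively with the class, with termination guaranteed by the strict decrease of the intersection number with a fixed ample divisor, is exactly the minimisation the paper performs (it fixes an ample $D$, chooses $\sigma$ minimising $\sigma(L).D$ among primitive embeddings with $\sigma(L).D>0$, and derives a contradiction from the reflection \eqref{K3RATNOTESE027} if $\sigma(L)$ were not nef). That part is correct.

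The ``moreover'' is where the gap is; you have located it accurately but not closed it, and in fact it cannot be closed along the route you propose. You want to upgrade $\sigma(L)$ to an \emph{ample} class so that $N\sigma(L)-\sigma(C)$ is ample for $N\gg 0$. But the lemma is stated for arbitrary K3 Picard lattices $\Lambda\hookrightarrow\Sigma$ and arbitrary $L\in\Lambda$ with $L^2>0$, and ampleness of $\sigma(L)$ is not achievable in general: if some primitive embedding $\sigma_0$ has $\sigma_0(L).\delta=0$ for a $(-2)$-class $\delta\in\Sigma$ (already the case when $\Lambda=\Sigma$ and $L$ is big and nef but not ample), then for any $\sigma=w\circ\sigma_0$ with $w$ in the Weyl group the class $\sigma(L)$ is orthogonal to the $(-2)$-class $w(\delta)$; since every $(-2)$-class or its negative is effective by Riemann--Roch, $\sigma(L)$ pairs to zero with an effective class and is never ample. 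So ``genericity in the $m_i$'' cannot rescue the argument, and appealing to the explicit constructions of Lemmas \ref{lem: lattice1} and \ref{lem: lattice2} would in any case only prove a special case of the statement. The paper's proof instead accepts that $P=\sigma(L)$ may lie on the boundary of the nef cone: it lets $R_1,\dots,R_l$ be the finitely many integral curves with $PR_i=0$ (all $(-2)$-curves with negative definite intersection matrix), and notes that the reflections in the $R_i$ fix $P$ while moving $\sigma(C)$ inside the finite set of classes $E+\sum m_iR_i$ with $(E+\sum m_iR_i)^2=E^2$. Choosing, within this finite orbit, the embedding $\xi$ that maximises $\xi(C).D$ forces $\xi(C).R_i\le 0$ for every $i$, and then $NP-\xi(C)$ pairs non-negatively with each $R_i$ and positively with every other curve for $N$ large, hence is big and nef. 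It is this second reflection step, applied to $C$ while fixing $P$, that your proposal is missing.
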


\begin{proof}
Fixing an ample divisor $D$ on $X$, we consider all primitive embeddings $\sigma: \Lambda\hookrightarrow \Sigma$ satisfying that $\sigma(L) . D > 0$. We choose $\sigma$ among these embeddings such that $\sigma(L) . D$ achieves the minimum.
So $\sigma(L)$ is pseudo-effective. By the Zariski Decomposition, we can write
$$
\sigma(L) = P + N
$$
where $P$ is a nef $\QQ$-divisor, $N$ is a $\QQ$-effective divisor whose components have negative self-intersection matrix and $PN = 0$. If $\sigma(L)$ is nef, i.e., $\sigma(L) = P$, we are done. Otherwise, there exists an integral curve $R\subset X$ such that $\sigma(L) . R < 0$. Then
$R\subset \mathop{\mathrm{supp}}\nolimits(N)$ is a $(-2)$-curve on $X$.

We let $\alpha: \Sigma\to \Sigma$ be the group
homomorphism given by
\begin{equation}\label{K3RATNOTESE027}
\alpha(F) = F + (F.R) R
\end{equation}
for $F\in \Sigma$. Note that $\alpha^2 = \text{id}$ and $(\alpha(F))^2  = F^2$ for all $F\in \Sigma$. So $\alpha$ is a lattice automorphism.

Let $\widehat{\sigma} = \alpha\circ \sigma$.
Since $\widehat{\sigma}(L) . P = P^2 > 0$
and $L^2 > 0$, $\widehat{\sigma}(L)$ is big and hence
$\widehat{\sigma}(L) . D > 0$. On the other hand, since $\sigma(L) . R < 0$,
$$
\widehat{\sigma}(L) . D = \sigma(L) . D + (\sigma(L).R) RD < \sigma(L) . D
$$
which contradicts our hypothesis that $\sigma$ minimises $\sigma(L) . D$. So
$\sigma(L) = P$ is big and nef.

Let $E = \sigma(C)$ and let $R_1, R_2, \ldots, R_l$ be the integral curves on $X$ such that $PR_i=0$ for $i=1,2,\ldots,l$.
Let us consider the set
$$
\begin{aligned}
& \Pi = \Big\{
\xi: \Lambda\hookrightarrow \Sigma \text{ primitive embedding } \big|\ \xi(L) = P \text{ and}
\\
&\hspace{72pt} \xi(C) = E + m_1 R_1 + m_2 R_2 + \ldots + m_l R_l \text{ for some } m_i\in \ZZ
\Big\}.
\end{aligned}
$$
Since $R_1, R_2, \ldots, R_l$ have negative definite self-intersection, there are only finitely many
$(m_1, m_2,\ldots, m_l)\in \ZZ^l$ satisfying that
$$
(E+m_1 R_1 + m_2 R_2 + \ldots + m_l R_l)^2 = E^2.
$$
Therefore, there exists $\xi\in \Pi$ maximising $\xi(C) . D$. We claim that $\xi(C) . R_i \le 0$. Otherwise, suppose that $\xi(C) . R >0$ for some $R=R_i$. Then $\widehat{\xi} = \alpha\circ \xi\in \Pi$ and
$$
\widehat{\xi}(C) . D = \xi(C) . D + (\xi(C).R) RD > \xi(C) . D
$$
which contradicts our choice of $\xi$. Therefore, $\xi(C) . R >0$ for all integral curves $R$ with $PR=0$.
Replacing $\sigma$ by $\xi$, we see that $\sigma(NL - C)$ is big and nef for $N$ sufficiently large.
\end{proof}

Now we have embedded the lattice \eqref{K3lattice-2m}
into \eqref{K3RATNOTESE007} or \eqref{K3RATNOTESE019}.
Next, we want to prove the existence of nodal rational curves on K3 surfaces with Picard lattices \eqref{K3RATNOTESE007} and \eqref{K3RATNOTESE019}. Here we use the Type II degeneration of K3 surfaces introduced in the previous section. It turns out that in order to produce rational curves on a general K3 surface, we need to construct rational curves on a log K3 surface with some tangency conditions. More precisely, we want to find rational curves on a del~Pezzo surface satisfying some tangency conditions with a fixed anti-canonical curve.

\begin{Definition}
For a Cartier divisor $A$ on a projective surface $X$, we use the notation $V_{A,g}$ to denote the Severi variety of integral curves of geometric genus $g$ in $|A|$.
For a curve $D\subset X$ and a zero cycle $\alpha = m_1 p_1 +
m_2 p_2 + \ldots + m_l p_l \in Z_0(D)$, we use the notation $V_{A,g,D,\alpha}$ to denote the subvariety of
$V_{A,g}$ consisting of integral curves $C \in |A|$ of genus $g$ with the properties that
$C$ meets $D$ properly and
there exist $q_i \in \nu^{-1} (p_i)$ and $n_i \ge m_i$ such that $q_1, q_2,\ldots,q_l$ are distinct and
$\nu^* D =
n_i q_i$ when $\nu$ is restricted to the open neighborhoods of $p_i$ and $q_i$ for $i = 1, 2, \ldots, l$,
where $\nu : \widehat{C}\to X$ is the normalisation of $C$, $m_1, m_2, \ldots, m_l \in \NN$ and $p_1, p_2, \ldots, p_l$ are
points on $D$ such that $D$ is locally Cartier at each $p_i$.
\end{Definition}

The variety $V_{A,g,D,m_1p_1 + m_2 p_2 + \ldots + m_l p_l}$ parametrises the curves of fixed tangencies with $D$. We can also define the subvariety of $V_{A,g}$ of curves of moving tangencies with $D$ by letting some of $p_i$ moving. For example,
with $p_1,p_2,\ldots,p_s$ moving, these curves are parametrised by
$$
\bigcup_{(p_1,p_2,\ldots,p_s)\in (D^s)^*} V_{A,g,D,m_1p_1 + m_2 p_2 + \ldots + m_l p_l}
$$
where $(D^s)^*$ is the open set of $D^s = D^{\times s}$ of points $p_i\ne p_j$ for $1\le i < j \le l$.  
In the following we write $A\ge B$ or $B\le A$ if $A-B$ is effective.

\begin{Theorem}
\label{thm: log K3}
Let $X$ be a smooth projective complex rational surface containing a smooth anti-canonical curve $D\in |-K_X|$. Let $A_1,A_2,\ldots,A_n$ be divisors on $X$ such that
\begin{itemize}
\item $A_i D \ge 1$, $A_1 D \ge 2$,
\item $(A_1 + A_2 + ... + A_j) A_{j+1} \ge 1$ and
\item $V_{A_i, 0} \ne \emptyset$
\end{itemize}
for $i=1,2,\ldots,n$ and $j=1,\ldots,n-1$. Then for $A = A_1 + A_2 + \ldots + A_n$, all distinct points $p_1,p_2,\ldots,p_l$ on $D$ satisfying
\begin{equation}
\label{K3RATNOTESE021}
\begin{aligned}
&\text{there does not exist an integral curve } B\subset X \text{ such that } A\ge B \text{ and}\\
&\hspace{72pt} B\cap D \subset \{p_1,p_2,\ldots,p_l\}
\end{aligned}
\end{equation}
and all $m_1, m_2, \ldots,m_l\in\NN$ satisfying $m = \sum m_i \le AD - 1$,
there exists an effective divisor $G$ on $X$ such that $DG=0$ and
$$
V_{A-G,0,D,m_1p_1 + m_2p_2 + \ldots + m_l p_l} \ne \emptyset.
$$

Moreover, for $V=V_{A-G,0,D,m_1p_1 + m_2p_2 + \ldots + m_l p_l}$ and a general member $C$ in $V$, we have
\begin{enumerate}
\item $\dim V = AD - m - 1$.
\item If $m \le AD - 2$,
the normalisation $\nu: \widehat{C}\to X$ of $C$ induces an injection
$\nu_*: T_{\widehat{C}} \to \nu^* T_X$, and $(CD)_{p_i} = m_i$
for $i=1,2,\ldots,l$.
\item If $m = AD - 1$, (2) holds for $p_1\in D$ general.
\item If $m \le AD -3$,
$C$ meets a fixed reduced curve $F\subset X$ transversely outside of $\{p_1,p_2,\ldots,p_l\}$.
\item If $m = AD - 3 + s$ for some $s\in \NN$,
(4) holds for $p_1,p_2,\ldots,p_s\in D$ general and $m_1,m_2,\ldots,m_s\in \ZZ^+$.
\item If $m\le AD - 4$, all singularities of $C$ are of type $\CC[[x,y]]/(x^a - y^a)$, i.e., ordinary.
\item If $m = AD - 4 + s$ for some $s\in \NN$,
(6) holds for $p_1,p_2,\ldots,p_s\in D$ general and $m_1,m_2,\ldots,m_s\in \ZZ^+$.
\item If $m \le AD - 5$, $C$
is nodal.
\item If $m = AD - 5 + s$ for some $s\in \NN$, (8) holds for $p_1,p_2,\ldots,p_s\in D$ general and $m_1,m_2,\ldots,m_s\in \ZZ^+$.
\end{enumerate}
\end{Theorem}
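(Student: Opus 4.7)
I would argue by induction on $n$, combined with a gluing-and-smoothing construction standard in the theory of Severi varieties on log Calabi--Yau pairs. For the base case $n=1$: we have $V_{A_1,0}\neq\emptyset$ and $A_1D\ge 2$, and condition \eqref{K3RATNOTESE021} forbids $B\cap D\subset\{p_1,\ldots,p_l\}$ for a fixed subcurve $B\le A_1$. Imposing $\sum m_i\le A_1D-1$ tangency conditions on the Severi variety (of expected dimension $A_1D-1$) then gives a nonempty $V_{A_1-G,0,D,\sum m_ip_i}$ of dimension $A_1D-m-1$, after possibly subtracting a fixed part $G$ supported off $D$.

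For the inductive step, set $A'=A_1+\cdots+A_{n-1}$. By induction pick $C'\in V_{A'-G',0,D,\sum m_ip_i}$; by hypothesis pick $C''\in V_{A_n-G'',0}$ meeting $D$ at $A_nD$ free points. The intersection condition $A'A_n\ge 1$ gives $C'\cap C''\ne\emptyset$, and varying $C',C''$ within their Severi varieties I arrange a transverse intersection, with at least one distinguished intersection point $x\notin D$. The reducible curve $C_0:=C'+C''$ is then a nodal rational curve in $|A-G'-G''|$ which meets $D$ with the prescribed tangency profile at the $p_i$ (since $x\notin D$ and $C''$ meets $D$ only at free points disjoint from $\{p_i\}$).

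Next I smooth the node at $x$ using the deformation theory of stable maps $f:\widehat{C}\to X$ relative to $D$ (compare the discussion preceding Theorem \ref{thm: limiting rational curves}). Since $x\notin D$, smoothing preserves the tangency divisor with $D$, and an application of the unobstructedness of rational-curve deformations on rational surfaces---together with the relative obstruction vanishing for the log-Calabi--Yau pair $(X,D)$---yields an integral rational curve $C\in V_{A-G,0,D,\sum m_ip_i}$ with $G=G'+G''$. Part (1) follows from a standard parameter count. The generic properties (2)--(9) reduce to statements that the general member of a sufficiently positive-dimensional Severi variety has the least pathology: immersive normalisation in (2), ordinary singularities in (6), nodality in (8); the ``$s$ general points'' refinements (3), (5), (7), (9) come from letting $s$ of the $p_i$ float, which restores the $s$ dimensions lost by specialisation and recovers enough genericity to apply Bertini-type arguments to the corresponding universal incidence varieties.

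The main obstacle is ensuring that the smoothing of $C_0$ at $x$ actually produces an \emph{integral} rational curve rather than remaining reducible, while simultaneously delivering the finer pathology-avoidance in (2), (6) and (8). Hypothesis \eqref{K3RATNOTESE021} is critical on both counts: it prevents spurious fixed components $B\le A$ from absorbing the tangency data (so the generic smoothing is irreducible), and it forces the general member of $V$ into the expected stratum. The numerical hypotheses $A_1D\ge 2$ and $(A_1+\cdots+A_j)A_{j+1}\ge 1$ serve, respectively, to produce the free intersection point with $D$ needed in the base case and to guarantee the nonempty pairwise intersection $C'\cap C''$ needed for gluing at each inductive step.
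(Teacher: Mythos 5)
Your gluing-and-smoothing induction on $n$ is essentially the paper's argument for the statement $V_{A,0}\ne\emptyset$ (the case $m=0$): take general members of $V_{A_1,0}$ and $V_{A_2,0}$, normalise all nodes but one, and compare the dimension of the deformation space, $\ge (A_1+A_2)D-1$, with $\dim V_{A_1,0}+\dim V_{A_2,0}=(A_1+A_2)D-2$ to force the deformation to become integral. Your treatment of the generic properties (1)--(9) as standard Severi-variety deformation theory also matches the paper. But the core of the theorem --- nonemptiness of $V_{A-G,0,D,m_1p_1+\ldots+m_lp_l}$ for $m$ up to $AD-1$ --- is not established by your argument, for two concrete reasons.

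First, in your base case you simply assert that imposing $m\le A_1D-1$ tangency conditions on $V_{A_1,0}$ ``gives a nonempty $V_{A_1-G,0,D,\sum m_ip_i}$ of dimension $A_1D-m-1$.'' That the expected dimension is nonnegative does not imply nonemptiness: tangency conditions are imposed on a quasi-projective Severi variety, and the locus can escape to the boundary, where curves become reducible or non-reduced. This is exactly the difficulty the paper's proof is built to address. The paper runs a \emph{separate induction on $m$}: it takes a one-parameter family in $\overline{V}$, writes $f^*D$ as sections plus vertical components, and shows the limit acquiring extra tangency at $p_1$ has the form $\sum\mu_jC_j+G'$ with each $C_j\in V_{C_j,0,D,\alpha_j}$. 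Hypothesis \eqref{K3RATNOTESE021} is used precisely to force $\deg\alpha_j\le C_jD-1$, and then the dimension count
$AD-m-2=\dim Z\le AD-m-1-\sum\mu_j$
forces $\sum\mu_j=1$, i.e.\ the limit is integral up to a $(-2)$-tail $G'$. Your appeal to \eqref{K3RATNOTESE021} (``prevents spurious fixed components from absorbing the tangency data'') gestures at the right mechanism but supplies no argument replacing this count. Second, your induction on $n$ places the entire tangency profile on $C'\in V_{A'-G',0,D,\sum m_ip_i}$ with $A'=A_1+\cdots+A_{n-1}$, so the inductive hypothesis only produces such a $C'$ for $m\le A'D-1$. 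The theorem, however, requires $m$ up to $AD-1=A'D+A_nD-1$, which your $C'$ cannot carry ($C'$ meets $D$ in total degree $A'D$); since your $C''$ meets $D$ only at free points, the tangency cannot be redistributed. So the induction as structured does not reach the stated range of $m$. Both gaps are repaired by decoupling the two inductions as the paper does: first $n$ (no tangency), then $m$ (adding one tangency condition at a time by degeneration inside $\overline{V}$).
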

\begin{proof}
By the standard deformation theory of curves on surfaces \cite{harrismorrison}, $V_{A_i,0}$ has the expect dimension $A_i D - 1$. Since $\dim V_{A_1,0} = A_1 D - 1\ge 1$, a general member $C_1\in V_{A_1,0}$ meets a fixed reduced curve $F\subset X$ transversely.
In particular, for a fixed $C_2\in V_{A_2,0}$, $C_2$ meets the normalization of $C_1$ transversely. Let us consider $C = C_1\cup C_2$ and the stable map $\nu: C^\nu\to X$ that normalises all singularities of $C$ except one point among $C_1\cap C_2$. The deformation space of $\nu$ has dimension at least
$(A_1+A_2) D - 1$. On the other hand, $\dim V_{A_1,0} + \dim V_{A_2,0} = (A_1+A_2) D - 2$. So $C$ deforms to an integral rational curve in $|A_1+A_2|$. Consequently, $V_{A_1+A_2,0}$ is nonempty of expected dimension $\dim V_{A_1+A_2,0} = (A_1+A_2) D - 1$. We may continue to apply the same argument to $C_{12}\cup C_3$, where $C_{12}$ is a general member of $V_{A_1+A_2,0}$ and $C_3\in V_{A_3,0}$. Eventually, we conclude that $V_{A,0}$ is nonempty of the expected dimension $AD - 1$.

Next let us prove $V_{A-G,0,D,m_1p_1 + m_2p_2 + \ldots + m_l p_l} \ne \emptyset$ by induction on $m$.
There is nothing to do when $m = 0$.
Suppose that
$$
V = V_{A-G,0,D,m_1p_1 + m_2p_2 + \ldots + m_l p_l} \ne \emptyset
$$
for some $m\le AD-2$. It suffices to show that
\begin{equation}
\label{K3RATNOTESE038}
V_{A-G-G',0,D,(m_1+1)p_1 + m_2p_2 + \ldots + m_l p_l} \ne\emptyset
\end{equation}
for some $G'\ge 0$ and $DG' = 0$.

Note that $V$ has the expected dimension $AD - m - 1$. Let $\overline{V}$ be the closure of $V$ in
$|A-G|$.
Let $g: \Gamma\hookrightarrow \overline{V}$ be an integral projective curve passing through a general point of $V_{A-G,0,D,mp}$. After a finite base change, there exists a family $f: \mathscr{C}\to X$ of stable maps of genus $0$ over $\Gamma$ such that $f_* \mathscr{C}_b = g(b)$ for every point $b\in \Gamma$. We may also choose $f$ such that $f^{-1}(D)$ is a union of sections over $\Gamma$ and some ``vertical'' components. That is,
\begin{equation}\label{K3RATNOTESE011}
f^* D = m_1 P_1 + m_2 P_2 + \ldots + m_l P_l +
n_1 Q_1 + n_2 Q_2 + \ldots + n_a Q_a + W
\end{equation}
where $P_i$ and $Q_j$ are sections of $\pi: \mathscr{C}\to \Gamma$,
$f(P_i) = p_i$ and $\pi_* W = 0$. Since all components of $\mathscr{C}_b$ are rational and $D$ is a smooth elliptic curve, $f_* W = 0$.

On the other hand, every connected component of $f^{-1}(D)$ must dominate $D$. Since $f_* P_1 = 0$, $P_1$ must lie on the same connected component as some $Q_j$. Therefore, there exists
$W' \subset W$ such that $P_1\cup W' \cup Q_j$ is connected. So
$P_1$ and $Q_j$ are joined by a chain of components contained in $f^{-1}(p_1)\cap\mathscr{C}_b$ for some point $b\in \Gamma$. In an open neighborhood $U\subset \mathscr{C}_b$ of the connected component of $f^{-1}(p_1)\cap \mathscr{C}_b$ containing
$P_1\cap \mathscr{C}_b$, we have
$$
(f_* U . D)_{p_1} \ge m_1+1.
$$
Let us write
\begin{equation}
\label{K3RATNOTESE015}
f_* \mathscr{C}_b = \mu_1 C_1 + \mu_2 C_2 + \ldots + \mu_r C_r + G'
\end{equation}
where $G'$ is supported on the components of $f_* \mathscr{C}_b$ that are disjoint from $D$ and $C_j$ are the components satisfying $C_j\in V_{C_j,0,D,\alpha_j}$ for effective $0$-cycles $\alpha_j$ on $D$
satisfying
$$
\begin{aligned}
&\mathop{\text{supp}}\nolimits \alpha_j \subset \{
p_1,p_2,\ldots,p_l\}
\text{ and}\\
& \sum_{j=1}^r \mu_j \alpha_j \ge (m_1+1)p_1 + m_2p_2 + \ldots + m_l p_l.
\end{aligned}
$$
Due to our choice of $p_1,p_2,\ldots,p_l$ in \eqref{K3RATNOTESE021},
$p_1,p_2,\ldots,p_l$ cannot be the only intersections between $C_j$ and $D$. Therefore, $\deg \alpha_j \le C_jD - 1$.

Since $\Gamma$ is an arbitrary curve in $\overline{V}$ passing through a general point of $V$, the above argument shows that there is a codimension one subvariety $Z$ of
$\overline{V}$ parametrising the curves \eqref{K3RATNOTESE015}:
\begin{equation}
\label{K3RATNOTESE039}
\begin{aligned}
Z &= \Big\{
\mu_1 C_1 + \mu_2 C_2 + \ldots + \mu_r C_r + G'\in \overline{V}: DG'= 0,\\
&\hspace{48pt} C_j\in V_{C_j,0,D,\alpha_j},\ \deg \alpha_j \le C_jD - 1 \text{ for } j=1,2,\ldots,r,\\
&\hspace{48pt} \sum_{j=1}^r \mu_j \alpha_j \ge (m_1+1)p_1 + m_2p_2 + \ldots + m_l p_l
\Big\}\\
\dim Z &= \dim V - 1 = AD - m - 2.
\end{aligned}
\end{equation}
Since $\deg \alpha_j \le C_jD - 1$, we obtain $\dim V_{C_j,0,D,\alpha_j}\le C_jD - \deg \alpha_j - 1$. There are at most
countably many rational curves that are disjoint from $D$, so $G'$ is rigid. Therefore,
$$
\begin{aligned}
AD - m - 2 &= \dim Z \le \sum_{j=1}^r \dim V_{C_j,0,D,\alpha_j}
\\
&\le \sum_{j=1}^r \mu_j \dim V_{C_j,0,D,\alpha_j} = \sum_{j=1}^r \mu_j(C_jD - \deg\alpha_j - 1)\\
&= A D - \sum_{j=1}^r \mu_j \deg\alpha_j - \sum_{j=1}^r \mu_j \le AD - m - 1 - \sum_{j=1}^r \mu_j.
\end{aligned}
$$
Then we must have $\sum \mu_j = 1$. That is,
$\sum \mu_j C_j$ contains only one component $C=C_1$ with multiplicity one. Clearly,
$$
C\in V_{A-G-G',0,D,(m_1+1)p_1 + m_2p_2 + \ldots + m_l p_l}
$$
and \eqref{K3RATNOTESE038} follows.
Once we have $V_{A-G,0,D,m_1p_1 + m_2p_2 + \ldots + m_l p_l} \ne \emptyset$, all the other statements follow from the standard deformation theory of curves on surfaces \cite{harrismorrison}.
\end{proof}

\begin{Corollary}\label{cor: log K3}
Let $X$ be a complex del~Pezzo surface and $D\in |-K_X|$ be a smooth anti-canonical curve on $X$. Then for all big and
nef divisors $A$ on $X$, all points $p_1,p_2,\ldots,p_l\in D$ satisfying \eqref{K3RATNOTESE021} and all $m_1, m_2,
\ldots,m_l\in\NN$ satisfying $m = \sum m_i \le AD - 1$, $$V_{A,0,D,m_1p_1 + m_2p_2 + \ldots + m_l p_l}\ne \emptyset.$$
\end{Corollary}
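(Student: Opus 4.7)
The plan is to reduce this directly to Theorem \ref{thm: log K3}. The essential simplification comes from the del~Pezzo hypothesis: since $X$ is del~Pezzo, $D = -K_X$ is ample, so any effective divisor $G$ on $X$ with $D \cdot G = 0$ must be zero. Hence the ``exceptional'' divisor $G$ appearing in the conclusion of Theorem \ref{thm: log K3} vanishes, and the conclusion $V_{A-G, 0, D, m_1 p_1 + \ldots + m_l p_l} \neq \emptyset$ becomes exactly $V_{A, 0, D, m_1 p_1 + \ldots + m_l p_l} \neq \emptyset$, which is what we want.

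Thus it suffices to exhibit a decomposition $A = A_1 + A_2 + \ldots + A_n$ satisfying the hypotheses of Theorem \ref{thm: log K3}: each $A_i \cdot D \geq 1$, $A_1 \cdot D \geq 2$, $(A_1 + \ldots + A_j) \cdot A_{j+1} \geq 1$, and $V_{A_i, 0} \neq \emptyset$. The building blocks are integral rational curves on $X$: if $[C_i]$ is the class of an integral rational curve $C_i \subset X$, then $V_{[C_i], 0} \neq \emptyset$ tautologically, and the adjunction formula gives $[C_i] \cdot D = C_i \cdot (-K_X) = C_i^2 + 2 \geq 1$, with $\geq 2$ whenever $C_i^2 \geq 0$. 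On any del~Pezzo surface the Mori cone $\overline{\mathrm{NE}}(X)$ is generated by classes of integral rational curves (by $(-1)$-curves when $K_X^2 \leq 7$, together with the line class on $\mathbb{P}^2$ and the rulings on $\mathbb{P}^1 \times \mathbb{P}^1$ in the remaining cases), and moreover $|-K_X|$ contains an integral nodal rational curve.

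Given a big and nef $A$, one decomposes $A$ as a $\mathbb{Z}_{\geq 0}$-combination of such rational-curve classes and then reorders: first pick $A_1$ to be a rational-curve class of nonnegative self-intersection (for instance an integral member of $|-K_X|$, or a ruling/conic class) to secure $A_1 \cdot D \geq 2$; then proceed greedily, at each stage choosing $A_{j+1}$ among the remaining summands so that $(A_1 + \ldots + A_j) \cdot A_{j+1} \geq 1$. This is possible because each partial sum is nef (being a sum of nef classes) and, once sufficiently many summands have been included, actually big and eventually ample, so it pairs strictly positively against any nonzero effective class.

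The main obstacle will be making the decomposition-and-ordering scheme uniform across all big and nef $A$ and all del~Pezzo surfaces $X$, in particular handling the initial stages when the partial sum $A_1 + \ldots + A_j$ may still lie on the boundary of the nef cone and hence be orthogonal to some extremal rational curve. I expect this to be handled by choosing the extremal rays $A_i$ in a suitable order dictated by the dual pairing with the nef cone, using the finite generation of the Mori cone of $X$. Once $A$ is written in such a form, Theorem \ref{thm: log K3} applies and, combined with the vanishing $G = 0$ observed above, delivers the corollary.
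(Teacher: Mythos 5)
Your opening reduction is exactly the paper's: since $-K_X$ is ample, any effective $G$ with $DG=0$ vanishes, so Theorem \ref{thm: log K3} reduces everything to exhibiting a decomposition $A = A_1+\cdots+A_n$ with $V_{A_i,0}\ne\emptyset$, $A_iD\ge 1$, $A_1D\ge 2$ and $(A_1+\cdots+A_j)A_{j+1}\ge 1$. The gap is in producing that decomposition. First, the adjunction identity you invoke, $C\cdot(-K_X)=C^2+2$, holds only for \emph{smooth} rational curves; for an integral rational curve it reads $C\cdot(-K_X)=C^2+2-2p_a(C)$. This matters precisely for the building block you propose in order to secure $A_1D\ge 2$: an integral nodal member of $|-K_X|$ has $C\cdot D=K_X^2$, which equals $1$ on a del~Pezzo surface of degree one. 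On such a surface, with $A=-mK_X$, every class you allow as a summand (the $(-1)$-curves and the integral anticanonical members) pairs to exactly $1$ with $D$, so no admissible ordering can begin with $A_1D\ge 2$ and your scheme cannot start. This is exactly the case the paper isolates and handles by a separate construction: it writes $2D=R_1+R_2$ for two $(-1)$-curves with $R_1R_2=3$, deforms $R_1\cup R_2$ to show $V_{2D,0}\ne\emptyset$, and then uses $2D$ --- a non-extremal class absent from your list --- as the first summand. You would need to supply this (or an equivalent) argument.

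Second, your justification of the greedy ordering, namely that ``each partial sum is nef (being a sum of nef classes)'', is false as stated, since $(-1)$-curve classes are not nef. The ordering can in fact be salvaged: after reducing to $A$ ample by contracting the $(-1)$-curves orthogonal to $A$, if every remaining summand paired $\le 0$ with the partial sum $S$ one would get $S\cdot(A-S)\le 0$, hence $S^2\ge S\cdot A>0$ and $(A-S)^2\ge A\cdot(A-S)>0$ for two effective classes with $S\cdot(A-S)\le 0$, contradicting the Hodge index theorem. But you leave this as an acknowledged obstacle rather than closing it, and you also do not prove that $A$ admits an integral $\ZZ_{\ge 0}$-decomposition into rational-curve classes in the first place. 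The paper sidesteps all of this with a different decomposition: it peels off copies of $D=-K_X$ itself, writing $A=mD+G$ with $G$ nef, and handles $G$ by induction on the Picard rank via blow-downs --- which is exactly why the single exceptional case $A=-mK_X$ with $K_X^2=1$ surfaces explicitly at the end of its proof.
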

\begin{proof}
By Theorem \ref{thm: log K3}, it suffices to show that $V_{A,0}\ne \emptyset$. Note that $D$ is ample and there does not exist $G\ge 0$, $G\ne 0$ and $DG = 0$.

It should be a well-known fact that every big and nef complete linear series on a del~Pezzo surface contains an integral
rational curve, but we include a simple argument proving this. It suffices to write $A = A_1 + A_2 + \ldots + A_n$ such that
$A_iD \ge 1$, $A_1D \ge 2$ and $V_{A_i,0}\ne \emptyset$ as in Theorem \ref{thm: log K3}.

We may assume that $A$ is ample. Otherwise, we simply blow down the $(-1)$-curves disjoint from $A$ to obtain $f: X\to Y$. Then $f_*A$ is ample on $Y$ and $A = f^*(f_* A)$.

Let $\Lambda, E_1, E_2,\ldots,E_r$ be the effective divisors generating $\Pic(X)$ with $\Lambda^2 = 1$, $\Lambda E_i=0$ and $E_i^2 = -1$ for $i=1,2,\ldots,r$.

When $r=0$, $A = d \Lambda$ for some $d\in \ZZ^+$ with $\Lambda D = 3$ and $V_{\Lambda,0}\ne \emptyset$, obviously. So $V_{A,0}\ne \emptyset$.

When $r=1$, $A = d\Lambda + m (\Lambda - E_1)$ for some $d,m\in \ZZ^+$ with
$V_{\Lambda,0}\ne \emptyset$ and $V_{\Lambda-E_1,0}\ne \emptyset$. So $V_{A,0}\ne \emptyset$.

When $r\ge 2$, the effective cone of curves of $X$ is generated by $(-1)$-curves on $X$. Therefore, there exists $m\in \ZZ^+$ such that $A - mD = G$ is nef while $A - (m+1)D$ is not.
It is not hard to see that $V_{D,0}\ne \emptyset$.

If $G^2 > 0$, then by our choice of $m$, $GR = 0$ for some $(-1)$-curve $R$.
We can blow down the $(-1)$-curves disjoint from $G$ to obtain $f:X\to Y$ such that
$f_* G$ is ample and $G = f^* (f_* G)$. So by induction on $\rank \Pic(X)$, we conclude that
$V_{G,0} \ne \emptyset$. Obviously, $G D \ge 2$ by the Hodge Index Theorem. Therefore, $V_{A,0}\ne \emptyset$.

If $G\ne 0$ and $G^2 = 0$, then $G = a F$ for some indivisible $F\in \Pic(X)$ with $F$ base point free and $F^2 = 0$. It is not hard to see that
$V_{F,0} \ne \emptyset$ and $FD = 2$. Therefore, $V_{A,0}\ne \emptyset$.

If $G=0$ and $D^2 \ge 2$, we can again derive $V_{A,0}\ne \emptyset$ from $V_{D,0}\ne \emptyset$. So the only case left is that
$A = mD$ and $D^2 = 1$, i.e., $r=8$ and $A = -mK_X$ for some $m\ge 2$. In this case, we can write
$$
2 D = \underbrace{(3\Lambda - 2E_1 - \sum_{i=2}^7 E_i)}_{R_1} +
\underbrace{(3\Lambda - \sum_{i=2}^7 E_i - 2E_8)}_{R_2}
$$
where $R_1$ and $R_2$ are $(-1)$-curves with $R_1R_2 = 3$. By deforming the union $R_1\cup R_2$,
we conclude that $V_{2D,0}\ne\emptyset$. Thus, $V_{A,0}\ne \emptyset$.
\end{proof}

The appearance of the effective divisor $G$ in
Theorem \ref{thm: log K3}, which we will call a {\em $(-2)$-tail}, is quite inconvenient for us. In the case that $X$ is a del~Pezzo surface, we automatically have $G = 0$ due to the ampleness of $-K_X$. However, we also need to apply the theorem to the case that $-K_X$ is big and nef. Namely, we need to work with singular del~Pezzo surfaces. In this case, every connected component of $G$, if nonzero, is supported on a tree of smooth $(-2)$-curves. Fortunately, the following proposition guarantees that $(-2)$-tails do not appear in the flat limits of integral rational curves.

\begin{Proposition}
\label{prop: (-2)-tails}
Let $\calX$ be a smooth proper family of surfaces over $B=\Spec \CC[[t]]$
and $f: \mathscr{C}/B\to \calX/B$ be a family of stable maps over $B$ such that
\begin{itemize}
\item the geometric generic fiber $\overline{\mathscr{C}}_\eta$ of $\mathscr{C}/B$ is connected and smooth and $f$ maps $\mathscr{C}$ birationally onto its image;
\item the image of the central fiber of $\mathscr{C}_0$
of $\mathscr{C}/B$ under $f$ is
$$
f_* \mathscr{C}_0 = C_0 + m_1 C_1 + m_2 C_2 + \ldots + m_r C_r
$$
where $m_i\in \NN$, $C_1,C_2,\ldots,C_r$ are smooth rational curves satisfying that
$C_i^2 \le -2$ and $C_1 + C_2 + \ldots + C_r$ has simple normal crossings and $C_0$ is a (possibly reducible and non-reduced) curve meeting $C_1 + C_2 + \ldots + C_r$ transversely on $X=\calX_0$;
\item each curve $C_i$ deforms in the family $\calX/B$
for $i=0,1,\ldots,r$;
\item $\mathscr{C}_0 - f^{-1}(C_1\cup C_2\cup \ldots\cup C_r)$ and $\mathscr{C}_0$ have the same arithmetic genus.
\end{itemize}
Then $m_1 = m_2 = \ldots = m_r = 0$.
\end{Proposition}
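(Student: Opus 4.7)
The plan is to combine intersection-theoretic identities in the flat family $\calX/B$ with the tree structure of the tail implied by the arithmetic-genus hypothesis.

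First, I will analyse the combinatorial structure of $T := f^{-1}(C_1\cup\cdots\cup C_r)\subset \mathscr{C}_0$ and $T' := \overline{\mathscr{C}_0\setminus T}$, so that $f_*T' = C_0$ and $f_*T = \sum_k m_k C_k$. A Mayer--Vietoris/Euler-characteristic computation on the nodal curve $\mathscr{C}_0 = T\cup T'$ translates the hypothesis $p_a(T') = p_a(\mathscr{C}_0)$ into the statement that $T = \bigsqcup_\alpha T_\alpha$ decomposes into trees of smooth rational curves, each $T_\alpha$ glued to $T'$ at exactly one node $q_\alpha$. Writing $(f|_{T_\alpha})_* T_\alpha = \sum_k n_k^{(\alpha)} C_k$ gives $m_i = \sum_\alpha n_i^{(\alpha)}$, so the goal reduces to showing that each cycle class $\sum_k n_k^{(\alpha)}[C_k]$ vanishes in $\NS(X)$.

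Next, for each $i = 0,1,\ldots,r$ I use the assumption that $C_i$ deforms to obtain an effective Cartier divisor $\mathscr{D}_i\subset\calX$ flat over $B$ with $(\mathscr{D}_i)_0 = C_i$. Pulling back $\OO_\calX(\mathscr{D}_i)$ to $\mathscr{C}$ and invoking the constancy of fibrewise degree in the flat family yields the identity
\[
  \Gamma_\eta\cdot(\mathscr{D}_i)_\eta \;=\; C_0\cdot C_i + m_i C_i^2 + \sum_{k\neq i} m_k\, C_k\cdot C_i,
\]
where $\Gamma_\eta := f_{\eta,*}\mathscr{C}_\eta$ is irreducible since $\mathscr{C}_\eta$ is connected and $f_\eta$ is birational onto its image. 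The possibility $\Gamma_\eta = (\mathscr{D}_i)_\eta$ will be excluded by a class-specialisation argument: otherwise $[\Gamma_0]=[C_i]$ in $\NS(X)$, which is incompatible with the non-trivial decomposition $[\Gamma_0]=[C_0]+\sum_k m_k [C_k]$ given that $C_0\neq 0$ and that the $[C_k]$ are linearly independent (they span a negative-definite sublattice of $\NS(X)$).

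Finally, I will run the same line-bundle computation tree-by-tree: restricting $f^*\OO_\calX(\mathscr{D}_i)$ to a single $T_\alpha$ gives $\deg_{T_\alpha}(f^*\mathscr{D}_i) = \sum_k n_k^{(\alpha)}(C_k\cdot C_i)$. The heart of the proof is to translate the facts that $T_\alpha$ is attached at only the node $q_\alpha$, that this node smooths in the family $\mathscr{C}/B$, and that the target $(-2)$-curves $(\mathscr{D}_i)_\eta$ are rigid, into the vanishing of the cycle class $\sum_k n_k^{(\alpha)}[C_k]$. Once this is established, linear independence of the $[C_k]$ gives $n_k^{(\alpha)} = 0$ for every $\alpha$ and $k$, whence $m_1 = \cdots = m_r = 0$.

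The hard part will be precisely this deformation-theoretic vanishing per tree. Concretely, I expect to work locally at each $q_\alpha$, where $\mathscr{C}$ is the standard smoothing $\{xy = t\}$ of a node and $f$ sends the two branches to $C_0$ and to some $C_{i_s}$; one then argues that a nontrivial class $\sum_k n_k^{(\alpha)}[C_k]$ would force the smoothing to carry $T_\alpha$ into a nontrivial branched cover of the rigid $(\mathscr{D}_{i_s})_\eta$, creating an additional irreducible component in $\Gamma_\eta$ and contradicting either the irreducibility of $\Gamma_\eta$ or the smoothness of $\mathscr{C}_\eta$. The negativity $C_i^2\leq -2$ is decisive here: it is what guarantees that $(\mathscr{D}_i)_\eta$ is rigid and cannot absorb extra multiplicity contributed by a contractible tree.
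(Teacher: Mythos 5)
Your opening reduction is the same as the paper's: the hypothesis $p_a(\mathscr{C}_0-f^{-1}(C_1\cup\cdots\cup C_r))=p_a(\mathscr{C}_0)$ forces each connected component $T_\alpha$ of $f^{-1}(C_1\cup\cdots\cup C_r)$ to be a tree of smooth rational curves attached to the rest of $\mathscr{C}_0$ at a single node. But from there the argument has a genuine gap, and it sits exactly where you say the ``heart'' is. First, the reduction to ``the cycle class $\sum_k n_k^{(\alpha)}[C_k]$ vanishes in $\NS(X)$'' is not a reduction at all: since every $n_k^{(\alpha)}\ge 0$, pairing with an ample class shows that vanishing of this class is \emph{equivalent} to $n_k^{(\alpha)}=0$ for all $k$, which is the statement to be proved. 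Likewise the degree-constancy identities obtained from $f^*\OO_{\calX}(\mathscr{D}_i)$ are tautologically consistent with any values of the $m_k$ (the class $[\Gamma_0]=[C_0]+\sum m_k[C_k]$ is just the specialisation of $[\Gamma_\eta]$, whatever the $m_k$ are), so they impose no constraint. Second, the proposed mechanism for the contradiction --- that positive multiplicity on a tail would ``create an additional irreducible component in $\Gamma_\eta$'' or conflict with smoothness of $\mathscr{C}_\eta$ --- does not occur: an irreducible curve on $\calX_\eta$ can perfectly well specialise to $C_0+\sum m_kC_k$ with some $m_k>0$ while all nearby fibres remain irreducible. That is precisely the $(-2)$-tail phenomenon the proposition is designed to exclude, and it is excluded by the genus hypothesis, not by irreducibility of $\Gamma_\eta$. (A minor further point: an snc configuration of curves with $C_i^2\le-2$ need not span a negative-definite sublattice --- think of a cycle of $(-2)$-curves --- though you do not actually need that claim.)

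What is actually needed, and what the paper supplies, is a quantitative local analysis of $f$ along the $C_a$. The hypothesis that each $C_a$ deforms in $\calX/B$ is used to split the normal bundle $\calN_{C_a/\calX}\cong\OO_{C_a}\oplus\OO_{C_a}(-n_a)$ with $n_a=-C_a^2\ge 2$; repeatedly blowing up $\calX$ along $C_a$ and its successive negative sections produces a chain of Hirzebruch surfaces $\FF_{n_a}$ separating the branches of $f(\mathscr{C})$ near $C_a$. A balancing identity at the double loci of the blown-up central fibre, combined with the inequality $\deg\xi_\Gamma\ge n_a\deg_\Gamma(f)\ge 2\deg_\Gamma(f)$ for every component $\Gamma$ dominating $C_a$ --- this is where $C_a^2\le-2$ is decisive, not via rigidity of $(\mathscr{D}_a)_\eta$ --- shows that each such $\Gamma$ must be attached to the rest of $\mathscr{C}_0$ through at least two distinct fibre-chains. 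Propagating this produces a walk in the dual graph of $\mathscr{C}_0$ that must close into a circuit or reach $M$ twice, contradicting the tree-plus-single-attachment structure. None of this is present in your proposal, so as it stands the proof is incomplete at its central step.
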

\begin{proof}
Let $M = \mathscr{C}_0 - f^{-1}(C_1\cup C_2\cup \ldots\cup C_r)$. The fact that $M$ and $\mathscr{C}_0$ have the same arithmetic genus
is equivalent to saying that every connected component $T$ of
$f^{-1}(C_1\cup C_2\cup \ldots\cup C_r)$ is a tree of smooth rational curves and $TM = 1$.

Suppose that at least one of $m_i$ is positive. We will construct a (possibly infinite) sequence $\Gamma_0, \Gamma_1, \ldots, \Gamma_n, \ldots$ such that
\begin{itemize}
\item $\Gamma_0 = M$ and each $\Gamma_i$ is either $M$ or an irreducible component of $\mathscr{C}_0$ dominating one of $C_1,C_2,\ldots,C_r$;
\item for each $i\in \NN$, $\Gamma_i \ne \Gamma_{i+1}$ and there exist a point $p\in C_1\cup C_2\cup \ldots\cup C_r$
and a connected component $T_i$ of $f^{-1}(p)$ satisfying that $T_i\cap \Gamma_i \ne \emptyset$
and $T_i \cap \Gamma_{i+1} \ne \emptyset$;
\item $T_i\ne T_{i+1}$ for all $i\in \NN$;
\item the sequence terminates at $n\ge 2$ if and only if $\Gamma_n = M$.
\end{itemize}
Once we have such a sequence, we must have $\Gamma_i = \Gamma_j$ for some $j-i\ge 2$. Then it is easy to see that the dual graph of $\mathscr{C}_0$ contains a path $G_1G_2\ldots G_m$ such that $G_1,G_2,\ldots,G_m$ are distinct components of $\mathscr{C}_0$ for some $m\ge 2$, $G_i\subset \mathscr{C}_0 - M$ for $1<i<m$,
$G_j\cap G_{j+1}\ne \emptyset$ for $j=1,2,\ldots,m-1$,
and $G_1G_2\ldots G_m$ is either a circuit or $G_1$ and $G_m$ are two distinct components of $M$. Either way,
this contradicts the hypothesis that $M$ and $\mathscr{C}_0$ have the same arithmetic genus. So it suffices to produce the sequence $\{\Gamma_i \}$ with the above properties.

Let $C = C_a$ for some $1\le a \le r$. Let $\calX^{[1]}$ be the blowup of $\calX$ along $C$. The central fiber of $\calX_0^{[1]}$ of $\calX^{[1]}$ over $B$ is the union of the proper transform of $X$, which we still denote by $X$, and the exceptional divisor $R_1$ meeting transversely along $X\cap R_1 = D_1$. One of the key hypotheses is that $C$ deforms in the family $\calX/B$. So the normal bundle of $C$ in $\calX$ splits as
$$
\calN_{C/\calX} = \calN_{X/\calX}\Big|_C \oplus \calN_{C/X} = \OO_C \oplus \OO_C(-n_a)
$$
where $C^2 = C_a^2 = -n_a\le -2$. Consequently, $R_1 \cong \FF_{n_a}$ for $n_a\ge 2$. And since $D_1^2 = n_a$ on $R_1$, $R_1$ contains a section $D_2$ over $C$ with $D_2^2 = -n_a$ and $D_1\cap D_2 = \emptyset$.

We continue to blow up $\calX^{[1]}$ along $D_2$ to obtain $\calX^{[2]}$. Then the central fiber $\calX_0^{[2]}$ of $\calX^{[2]}/B$ is the union $X\cup R_1 \cup R_2$, where $X$ is the proper transform of $X\subset \calX$, $R_1$ is the proper transform of $R_1\subset \calX^{[1]}$, $R_2$ is the exceptional divisor,
$X$ and $R_1$ meet transversely along $D_1 = X\cap R_1$, $R_1$ and $R_2$ meet transversely along $D_2 = R_1\cap R_2$ and
$X\cap R_2 = \emptyset$. Here we again abuse the notations by using $X, R_i, D_j$ for the subvarieties of
all $\calX^{[k]}$. Again, we have $R_2 \cong \FF_{n_a}$ and a section $D_3$ of $R_2/C$ with
$D_3^2 = -n_a$ on $R_2$ and $D_2\cap D_3=\emptyset$. We may continue to blow up
$\calX^{[2]}$ along $D_3$ to obtain $\calX^{[3]}$. So we have a sequence of blowups
\begin{equation}\label{K3RATNOTESE037}
\begin{tikzcd}
\calX = \calX^{[0]} & \calX^{[1]} \ar{l} & \calX^{[2]} \ar{l} & \ldots \ar{l} & \calX^{[l]}\ar{l}
\end{tikzcd}
\end{equation}
where $\calX_0^{[l]} = X\cup R_1\cup R_2 \cup \ldots \cup R_l$ such that
\begin{itemize}
    \item $X$ is the proper transform of $X = \calX_0$,
    \item $R_i\cong \FF_{n_a}$ for $i=1,2,\ldots,l$,
    \item $R_i\cap R_j = \emptyset$ for $0\le i < j-1 \le l-1$ and $R_0=X$,
    \item $R_{i-1}$ and $R_i$ meet transversely along $D_i = R_{i-1}\cap R_i$ and
    $D_i^2=-n_a$ on $R_{i-1}$ and $D_i^2 = n_a$ on $R_i$ for $i=1,2,\ldots,l$.
\end{itemize}

Over a general point $q\in C$, the map $f: \mathscr{C}_0\to X$ is finite and unramified onto its image if
$C\subset f(\mathscr{C})$. Therefore, the proper transform of $f(\mathscr{C})$ under $\calX^{[l]}\to \calX$ does not contain $D_i$ for $i=1,2,\ldots,l$. And since $f(\mathscr{C})$ is irreducible and $C_0 \ne C$,
for $l$ large enough, the proper transform of $f(\mathscr{C})$ does not contain $D_{l+1}$, either, where
$D_{l+1}$ is the section of $R_l/C$ with $D_{l+1}^2 = -n_a$.
Let us choose $l$ with this property and also lift
$f: \mathscr{C}\to \calX$ to a family $\widehat{f}: \mathscr{C} \to \calX^{[l]}$ of stable maps with the diagram
$$
\begin{tikzcd}
\widehat{\mathscr{C}} \ar{r}{\widehat{f}} \ar{d}[left]{\varphi} & \calX^{[l]}\ar{d}\\
\mathscr{C} \ar{r}{f} & \calX
\end{tikzcd}
$$
after a base change.

For every component $\Gamma$ of $\mathscr{C}_0$ that dominates $C$ via $f$, our choice of $l$ implies that $\widehat{f}(\widehat{\Gamma})$ lies in $R_i$ for some $1\le i\le l$ and $D_i, D_{i+1}\not\subset \widehat{f}(\widehat{\Gamma})$, where
$\widehat{\Gamma}\subset \widehat{\mathscr{C}}_0$ is the proper transform of $\Gamma$ under $\varphi$. Let us define two things using $\widehat{f}$:
\begin{enumerate}
\item We define a partial order among the components of $\mathscr{C}_0$ that dominates $C$ via $f$. Let $\Gamma$ and $\Gamma'$ be two components of $\mathscr{C}_0$ dominating $C$. Let $\widehat{\Gamma}$ and $\widehat{\Gamma}'\subset \widehat{\mathscr{C}}_0$ be their proper transforms under $\varphi$.
Suppose that $\widehat{f}(\widehat{\Gamma})\subset R_i$ and
$\widehat{f}(\widehat{\Gamma}')\subset R_j$
for some $1\le i,j\le l$.
We say that $\Gamma\prec \Gamma'$ or $\Gamma' \succ \Gamma$ if
$i < j$ and $\Gamma\nprec \Gamma'$ or $\Gamma' \nsucc \Gamma$
if $i\ge j$.
\item Let $\Gamma$ be a component of $\mathscr{C}_0$ that dominates $C$ via $f$ and $\widehat{\Gamma} \subset \widehat{\mathscr{C}}_0$ be its proper transform. Suppose that $\widehat{f}(\widehat{\Gamma})\subset R_i$ for some $1\le i \le l$. We define $\xi_\Gamma$ to be the effective $0$-cycle on $\Gamma$ given by
$$
\xi_\Gamma = \varphi_* ( (\widehat{f}^* R_{i-1}) . \widehat{\Gamma}).
$$
Note that $\widehat{f}(\widehat{\Gamma})$ is an integral curve on $R_i\cong \FF_{n_a}$ meeting $D_{i-1}$ and $D_i$ properly. Therefore, we have
\begin{equation}\label{K3RATNOTESE041}
\deg \xi_\Gamma = (\widehat{f}_* \widehat{\Gamma}) . R_{i-1} \ge n_a \deg_\Gamma (f) \ge 2\deg_\Gamma (f)
\end{equation}
where $\deg_\Gamma(f)$ is the degree of the map $f: \Gamma\to C$.
\end{enumerate}

One of our basic tools is the following observation:
\begin{quote}
($*$) Let $V\subset \calX^{[l]}$ be an \'etale/analytic/formal open neighborhood of a point $p\in D_i = R_{i-1}\cap R_i$ for some $1\le i \le l$ such that
$$
V\cong \CC[[x,y,z,t]]/(xy-t^m).
$$
Let $U\subset \widehat{\mathscr{C}}$ be a connected component of $\widehat{f}^{-1}(V)$. We write
$$
U_0 = W_{i-1} + W_i
$$
with $\widehat{f}(W_{i-1}) \subset R_{i-1}$ and $\widehat{f}(W_i) \subset R_i$. Then
$$
\widehat{f}_* W_{i-1} . R_i = \widehat{f}_* W_i . R_{i-1}.
$$
\end{quote}

We will construct the sequence $\{\Gamma_i\}$ inductively such that for each $i\in \ZZ^+$,
either $\Gamma_i = M$ or $\Gamma_i$ dominates some $C_a$ via $f$ and
\begin{equation}\label{K3RATNOTESE040}
\mathop{\text{supp}}(\xi_{\Gamma_i}) \not\subset T_{i-1}.
\end{equation}

We have $\Gamma_0 = M$. Let us first find $\Gamma_1$. Since $\mathscr{C}_0$ is connected, there exist a point
$p\in C_0\cap C_a$ for some $1\le a \le r$, a connected component $T_0$ of $f^{-1}(p)$
and a component $\Gamma_1$ of $\mathscr{C}_0$ dominating $C_a$
such that $T_0\cap \Gamma_0\ne\emptyset$ and $T_0\cap \Gamma_1 \ne\emptyset$.
Since $C_0$ meets $C_a$ transversely at $p$, we must have
$$
v_q(\xi_{\Gamma_1}) = 1
$$
by ($*$), where $q = T_0\cap \Gamma_1$ and $v_q(\xi_{\Gamma_1})$ is the multiplicity of $q$ in the $0$-cycle
$\xi_{\Gamma_1}$. By \eqref{K3RATNOTESE041}, $\mathop{\text{supp}}(\xi_{\Gamma_1})$ contains at least another point $q'\ne q$. So \eqref{K3RATNOTESE040} holds for $i=1$. We have found $\Gamma_1$ with the required property.

Suppose that we have found $\Gamma_i$. If $\Gamma_i = M$, the sequence terminates and we are done. Suppose that $\Gamma_i$ dominates $C_a$ for some $1\le a\le r$. By \eqref{K3RATNOTESE040}, there is a point $q\in \mathop{\text{supp}}(\xi_{\Gamma_i})$ such that $q\not\in T_{i-1}$. Let $T_i$ be the connected component of $f^{-1}(f(q))$ such that $q = T_i\cap \Gamma_i$. There are three cases:
\begin{enumerate}
\item $M\cap T_i \ne \emptyset$. In this case, we simply let $\Gamma_{i+1} = M$.
\item There is a component $\Gamma$ of $\mathscr{C}_0$ dominating $C_a$ such that
$\Gamma\cap T_i\ne \emptyset$ and $\Gamma\prec \Gamma_i$. Then we let $\Gamma_{i+1} = \Gamma$ since we have
$$
\mathop{\text{supp}}(\xi_{\Gamma}) \not\subset T_i
$$
by ($*$).
\item Both (1) and (2) fail. By ($*$), there must be a component $G$ of $\mathscr{C}_0$ dominating
$C_b$ for some $1\le b\ne a \le r$ such that $G\cap T_i\ne\emptyset$. This case requires more effort.
\end{enumerate}

Now let us deal with case (3). Since both (1) and (2) fail, $M\cap T_i = \emptyset$ and for all components
$\Gamma\ne\Gamma_i$ of $\mathscr{C}_0$ dominating $C_a$ and satisfying $\Gamma\cap T_i \ne \emptyset$, we have
$\Gamma\nprec \Gamma_i$.

Let $P$ be the union of the components $\Gamma$ of $\mathscr{C}_0$ dominating $C_a$ and satisfying $\Gamma\cap T_i \ne \emptyset$ and let $Q$ be the union of the components $G$ of $\mathscr{C}_0$ dominating $C_b$ and satisfying $G\cap T_i \ne \emptyset$. We let $U$ be an \'etale open neighborhood of $T_i$ in $\mathscr{C}$ and let $f_U$ be the restriction of $f$ to $U$. Then by ($*$), we have
\begin{equation}\label{K3RATNOTESE044}
\deg_P(f_U)\le \deg_Q (f_U)
\end{equation}
where $\deg_P (f_U)$ and $\deg_Q(f_U)$ are the degrees of the maps
$$
f: P\cap U\to C_a\cap f(U) \text{ and } f: Q\cap U \to C_b \cap f(U),
$$
respectively.
We claim that there exists at least one component $G\subset Q$ such that
\begin{equation}\label{K3RATNOTESE045}
\mathop{\text{supp}}(\xi_G) \not\subset T_i.
\end{equation}
Otherwise, suppose that $\mathop{\text{supp}}(\xi_G) \subset T_i$ for all components $G\subset Q$.
And since $G$ and $T_i$ meet at a unique point $s$, this implies
that $\mathop{\text{supp}}(\xi_G)$ consists of the single point $s$,
the map $f:G\to C_b$ is totally ramified at $s$ and
\begin{equation}\label{K3RATNOTESE043}
v_s(\xi_G) = \deg \xi_G \ge 2 \deg_G(f)
\end{equation}
by \eqref{K3RATNOTESE041}.

Then by \eqref{K3RATNOTESE043} and by applying ($*$) to the blowup sequence \eqref{K3RATNOTESE037} over $C = C_b$, we conclude that
\begin{equation}\label{K3RATNOTESE042}
\deg_P(f_U) = \sum_{\substack{G\subset Q\\
    s=G\cap T_i}} v_s(\xi_G) \ge 2\deg_Q(f) = 2\deg_Q (f_U)
\end{equation}
where $\deg_Q(f) = \deg_Q(f_U)$ since the map $f:G\to C_b$ is totally ramified at $G\cap T_i$ for all components $G\subset Q$.
Clearly, \eqref{K3RATNOTESE044} and \eqref{K3RATNOTESE042} contradict each other. This proves \eqref{K3RATNOTESE045} for some component $G\subset Q$. So it suffices to take $\Gamma_{i+1} = G$.
\end{proof}

\begin{Corollary}
\label{cor: log k3 (-2)-tail}
Under the same hypotheses of Theorem \ref{thm: log K3}, we further assume that $DP > 0$ for all nef and effective divisors $P\not\supset D$. Then Theorem \ref{thm: log K3} holds for $G = 0$.
\end{Corollary}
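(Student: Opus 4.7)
The plan is to run the inductive proof of Theorem \ref{thm: log K3} verbatim and to invoke Proposition \ref{prop: (-2)-tails} to rule out the $(-2)$-tail $G'$ that can appear at each step. Recall that the induction in that proof advances $V_{A-G,0,D,m_1p_1+\ldots+m_l p_l} \ne \emptyset$ to $V_{A-G-G',0,D,(m_1+1)p_1+\ldots+m_l p_l} \ne \emptyset$, where $G'$ is effective with $DG'=0$ and arises as the ``vertical'' part of the central fiber of a general one-parameter degeneration $\Gamma \subset \overline{V}$; it thus suffices to show that $G'=0$ under the new hypothesis.

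First I would analyze the components of $G'$. Any irreducible component $R$ satisfies $DR \ge 0$, since $D$ is a smooth irreducible curve and $R \ne D$, and summing forces $DR = 0$ for each such $R$. By the new hypothesis, $R$ cannot be nef (else $DR > 0$), so $R^2 < 0$; combined with $K_X R = -DR = 0$ and adjunction, this yields $R^2 = -2$ and $p_a(R) = 0$, so every component of $G'$ is a smooth $(-2)$-curve disjoint from $D$. Choosing $\Gamma$ sufficiently general, one may further assume that the sum of the components of $G'$ has simple normal crossings and that $C$ meets $G'$ transversely.

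The remaining step is to apply Proposition \ref{prop: (-2)-tails} to the trivial family $\calX := X \times B$ over $B = \Spec \CC[[t]]$, after restricting $\Gamma$ to a formal neighborhood of its central point $b$. This yields a family of stable maps $f: \mathscr{C} \to \calX$ whose generic fiber is a smooth integral rational curve birational onto its image in $V$, and whose central fiber image is $C + G'$ (using $\sum \mu_j = 1$ already established in the proof of Theorem \ref{thm: log K3}). Each $(-2)$-component of $G'$ deforms trivially in $\calX/B$, and flatness together with smoothness of the generic fiber gives $p_a(\mathscr{C}_0) = 0$. The main obstacle is the final hypothesis of Proposition \ref{prop: (-2)-tails}, namely that $M := \mathscr{C}_0 - f^{-1}(G')$ has the same arithmetic genus as $\mathscr{C}_0$; since $\mathscr{C}_0$ is a connected tree of rational curves, this reduces to ensuring that each connected component of $f^{-1}(G')$ is a subtree meeting $M$ at a single point, which I would secure by choosing $\Gamma$ generically and contracting any extraneous rational bridges in the stable limit without changing $f_*\mathscr{C}_0 = C + G'$. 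Granted this, Proposition \ref{prop: (-2)-tails} forces every multiplicity of a component of $G'$ to vanish, contradicting $G' \ne 0$; hence $G' = 0$ and the induction runs through with $G = 0$ throughout.
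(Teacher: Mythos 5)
Your reduction to Proposition \ref{prop: (-2)-tails} is the paper's strategy, and your identification of the components of $G'$ is correct and even a bit slicker than the paper's: $DR=0$ plus the new hypothesis forces $R$ non-nef, hence $R^2<0$, and adjunction with $K_XR=-DR=0$ gives $R^2=-2$ and $R$ smooth rational. The genuine gap is the sentence ``Choosing $\Gamma$ sufficiently general, one may further assume that the sum of the components of $G'$ has simple normal crossings and that $C$ meets $G'$ transversely.'' The components of $G'$ are rigid $(-2)$-curves on the \emph{fixed} surface $X$ ($h^0(\calN_{R/X})=h^0(\OO_{\PP^1}(-2))=0$); their mutual configuration is an invariant of $X$ and cannot be improved by varying the one-parameter family $\Gamma$ inside $\overline{V}$. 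Since Proposition \ref{prop: (-2)-tails} explicitly requires $C_1+\cdots+C_r$ to have simple normal crossings (its blow-up bookkeeping and the observation $(*)$ use this), you cannot invoke it without first proving that property for the configuration of curves disjoint from $D$.

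The paper closes exactly this gap by proving a stronger, global statement: the union $\Sigma$ of \emph{all} rational curves $R$ with $DR=0$ has negative definite self-intersection matrix. The point is that the hypothesis $DP>0$ must be applied not just to individual curves but to effective combinations: if some subset $\{R_1,\dots,R_n\}\subset\Sigma$ were not negative definite, one extracts an effective divisor $A=\sum c_iR_i$ with $c_i>0$ and $A^2\ge 0$, shows (by minimality of $A$) that $A$ is nef, and then $DA=0$ contradicts the hypothesis. Negative definiteness then yields at once that $\Sigma$ is a finite ADE-type configuration of smooth $(-2)$-curves with simple normal crossings (two $(-2)$-curves meeting in $\ge 2$ points, or tangentially, or three through a common point, would already violate negative definiteness), which is what Proposition \ref{prop: (-2)-tails} needs. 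Your per-component adjunction argument recovers only the ``each $R$ is a smooth $(-2)$-curve'' part of this; you should add the negative-definiteness lemma (or an equivalent pairwise/triple-wise argument applying the hypothesis to $R_i+R_j$, etc.) before the appeal to Proposition \ref{prop: (-2)-tails}. The remaining points you raise (the arithmetic-genus hypothesis on $M=\mathscr{C}_0-f^{-1}(G')$, the use of the constant family $X\times B$) are handled in the same terse spirit as the paper and are not where the difficulty lies.
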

\begin{proof}
Let $\Sigma$ be the union of all rational curves $R\subset X$ such that $DR = 0$. We claim that $\Sigma$ is a union of smooth rational curves with negative definite self-intersection matrix.

Suppose that $R_1, R_2,\ldots,R_n\subset \Sigma$ are rational curves
whose self-intersection matrix is not negative definite. We may choose $\{R_1,R_2,\ldots,R_n\}$ such that every proper subset of
$\{R_1,R_2,\ldots,R_n\}$ has negative definite self-intersection matrix. Since the self-intersection matrix of $\{R_1,R_2,\ldots,R_n\}$ is not negative definite, we can find $c_1,c_2,\ldots,c_n\in \ZZ$, not all zero, such that
$(c_1 R_1 + c_2 R_2 + \ldots + c_n R_n)^2 \ge 0$. We may choose $c_i$ such that at least one of $c_i$ is positive. Let us write
$$
c_1 R_1 + c_2 R_2 + \ldots + c_n R_n = \underbrace{\sum_{c_i > 0} c_i R_i}_A
- \underbrace{\sum_{c_i \le 0} (-c_i)R_i}_B.
$$
We claim that $B=0$; otherwise, $A^2 < 0$, $B^2 < 0$ and $AB \ge 0$ by our hypothesis on $R_i$ and hence $(A-B)^2 < 0$.
Therefore, $B=0$ and $c_1,c_2,\ldots,c_n > 0$.
In other words, there exists an effective divisor
$A = \sum c_i R_i$ supported on $R_1+R_2+\ldots+R_n$ such that
$A^2 \ge 0$. Let us choose $A$ such that $B^2 < 0$ for all
$0< B < A$. Clearly, $A$ is nef; otherwise,
$AR_i \le -1$ for some $i$ and then
$$
(A-R_i)^2 = A^2 - 2AR_i + R_i^2 \ge A^2 + 2 - 2 = A^2 \ge 0.
$$
So $A$ is nef and hence $DA > 0$, which is a contradiction.

In conclusion, all subsets $\{R_1,R_2,\ldots,R_n\}\subset \Sigma$
have negative definite self-intersection matrices. This actually implies that $\Sigma$ is a union of finitely many smooth rational $(-2)$-curves with simple normal crossings.

In the proof of Theorem \ref{thm: log K3}, the support of $G'$ in \eqref{K3RATNOTESE015} is contained in $\Sigma$. Hence $G' = 0$ by Proposition \ref{prop: (-2)-tails}. Thus, Theorem \ref{thm: log K3} holds for $G=0$.
\end{proof}

\begin{Theorem}
\label{K3NOTESTHM001}
For a general complex K3 surface $X$ with Picard lattice
\eqref{K3RATNOTESE007}
$$
\begin{bmatrix}
2\\
& -2\\
&& -2\\
&&& \ddots\\
&&&& -2
\end{bmatrix}_{(r+1)\times (r+1)}
$$
$r\leq 8$ and a big and nef divisor $L$ on $X$,
there exists an integral rational curve $C\in |L|$ such that the normalisation $\nu: \widehat{C}\to X$ of $C$ induces an injection $\nu_*: T_{\widehat{C}} \to \nu^* T_X$.
In addition, if $r\le 6$, $C$ can be chosen to be nodal.
\end{Theorem}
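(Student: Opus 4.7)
The strategy is to produce an integral (nodal) rational curve in $|L|$ on some specific K3 of the required lattice type via the type II degeneration machinery of Section \ref{sec: type II degeneration}, and then propagate the statement to the general $X$ in $M_\Lambda$ by upper semicontinuity. I would set up a flat family $\pi:\calX\to\Spec\CC[[t]]$ whose central fibre $Y=Y_1\cup_D Y_2$ is the transverse union of two smooth del~Pezzo surfaces meeting along a smooth anti-canonical elliptic curve $D$, with $K_{Y_1}^2+K_{Y_2}^2\ge 2$ and with the degrees of the $Y_i$ arranged so that classes lifting $A,E_1,\ldots,E_r$ of \eqref{K3RATNOTESE007} descend to $\Pic(Y)$ and are linearly independent in $\HH^{1,1}(Y_1)$. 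Theorem \ref{thm: k3 type2 degeneration} then smooths $Y$ to a projective K3 with the required Picard lattice, producing $s=K_{Y_1}^2+K_{Y_2}^2$ nodes $x_1,\ldots,x_s$ on $D$ satisfying \eqref{K3RATNOTESE000}. I would express the restriction of $L$ as a pair $(L_1,L_2)\in\Pic(Y_1)\oplus\Pic(Y_2)$ with $L_1|_D=L_2|_D$.

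Next I would build a limiting rational curve component-wise. On each del~Pezzo $Y_i$, Corollary \ref{cor: log K3} combined with Corollary \ref{cor: log k3 (-2)-tail} (which eliminates the $(-2)$-tail $G$ since $-K_{Y_i}$ is ample) yields an integral rational curve $C_i\in|L_i|$ with any admissible prescribed tangency cycle $\sum_j m_j x_j$ along $D$; I would match the multiplicities $m_j$ on the two sides so that the stable map $f:C_1\sqcup C_2\to Y$ satisfies \eqref{K3RATNOTESE025}--\eqref{K3RATNOTESE020}. Theorem \ref{thm: limiting rational curves} (after a finite base change) then deforms $f$ to a family of stable maps whose generic fibre is an integral geometric-genus-zero curve in $|L|$ on $\calX_\eta$, and part (2) of Theorem \ref{thm: log K3} applied to generic $C_i$ gives the immersion property \eqref{K3RATNOTESE046}, so $\nu_*:T_{\widehat{C}}\to\nu^*T_X$ is injective. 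For the nodality statement when $r\le 6$, I would invoke parts (4), (6), (8) of Theorem \ref{thm: log K3}: for sufficiently general $C_i$ in the relevant Severi varieties, the conditions of \eqref{K3RATNOTESE031} are met, so the deformed curve is nodal. The restriction $r\le 6$ reflects the need for $L_iD$ to be large enough relative to the number $s+m$ of imposed tangency conditions; when $r=7,8$ the del~Pezzos are too small to enforce all three clauses of \eqref{K3RATNOTESE031} simultaneously and only the immersion assertion survives.

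The main obstacle I anticipate is the combinatorial matching of tangencies at the $s$ nodes: the tangency cycles of $C_1$ and $C_2$ along $D$ must agree at each $x_j$, their total degrees must equal $L_iD$, and the supports are constrained by the torsion relation $\OO_D(\sum x_j)=\OO_D(-K_{Y_1}-K_{Y_2})$ on the elliptic curve $D$. Arranging a compatible choice for a general big and nef $L$ requires a case analysis based on the decomposition of $L$ over $Y_1$ and $Y_2$ (and possibly replacing $L$ by a better-behaved representative in the $\Aut(\Sigma)^+$-orbit, in the spirit of Lemma \ref{lem: lattice3} and Remark \ref{rem: lattice1.1}), together with a careful choice of the $Y_i$'s degrees so that a decomposition $L_i=A_1+\cdots+A_n$ satisfying the numerical hypotheses of Theorem \ref{thm: log K3} exists on each side.
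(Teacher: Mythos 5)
Your proposal follows essentially the same route as the paper: degenerate to a transverse union of two del~Pezzo surfaces glued along a smooth anticanonical elliptic curve, build the limiting rational curve componentwise from the tangency Severi varieties $V_{L_i,0,D,\alpha}$ of Corollary \ref{cor: log K3}, match the tangency cycles of the two components along $D$, and conclude via Theorem \ref{thm: limiting rational curves}, with nodality for $r\le 6$ coming from generality in those Severi varieties. The only slip is that the smoothing should be done with Theorem \ref{K3RATNOTESTHM000} (which keeps the Picard rank equal to $r+1$) rather than Theorem \ref{thm: k3 type2 degeneration} (which raises it to $r+2$); beyond that, the paper simply makes your ``combinatorial matching'' explicit by taking $\Gamma_i.D = x_1 + mp$ with $m = LD-1$ for the immersion statement, and $\Gamma_i.D = x_1+x_2+x_3+x_4+mp$ with $m=LD-4$ (together with the low cases $LD\le 4$ and $LD=1$) for nodality.
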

\begin{proof}
We consider a type II degeneration $Y = Y_1\cup Y_2$, where $Y_i$ are two del Pezzo surfaces
whose Picard groups are generated by effective divisors $A_i, E_{i1}, E_{i2},\\ \ldots, E_{ir}$
with intersection matrix
\begin{equation}\label{K3RATNOTESE008}
\begin{bmatrix}
1\\
& -1\\
&& -1\\
&&& \ddots\\
&&&& -1
\end{bmatrix}_{(r+1)\times (r+1)}
\end{equation}
for $i=1,2$ and $Y_1$ and $Y_2$ meet transversely along a smooth anti-canonical curve $D = Y_1\cap Y_2$. Let $\imath_i:D\hookrightarrow Y_i$ be the inclusion.
We further require
\begin{equation}\label{K3RATNOTESE009}
\imath_1^* (A_1) \,=\,
\imath_2^* (A_2)
\text{ \quad and \quad }
\imath_1^* E_{1j} = \imath_2^* E_{2j}
\end{equation}
in $\Pic(D)$ for $j=1,2,\ldots,r$.

For a general choice of such $Y$,
the relations from \eqref{K3RATNOTESE009} are the only relations among
$\imath_i^* A_i$ and $\imath_i^* E_{ij}$ in $\Pic(D)$.
If these are satisfied, then $\Pic(Y)$ is freely generated by $A$ and $E_j$
whose restriction to $Y_i$ are $A_i$ and $E_{ij}$, respectively.
By Theorem \eqref{K3RATNOTESTHM000},
$Y$ can be deformed to a K3 surface with Picard lattice \eqref{K3RATNOTESE007}.
Clearly, $E_j$ deform to disjoint $(-2)$-curves and $A$ deforms to a big and nef divisor orthogonal
to $E_j$ correspondingly.

Let $\pi: {\calX}\to \Spec \CC[[t]]$ be such a family with ${\calX}_0 = Y$.
Now we use $A, E_1, E_2, \ldots, E_r$ to denote the effective divisors on $\calX$
whose restrictions to $Y_i$ are $A_i, E_{i1}, E_{i2}, \ldots, E_{ir}$, respectively, for $i=1,2$.
Meanwhile, the big and nef divisor $L$ on the generic fiber $\calX_\eta$ extends to a divisor, which we still denote by $L$, on $\calX$. We let $L_i$ be the restriction of $L$ to $Y_i$ for $i=1,2$.

Clearly, the $3$-fold $\calX$ has $18 - 2r$ rational double points
$x_1$, $x_2$, \ldots, $x_{18-2r}$ on $D$ satisfying
\begin{equation}
\label{K3RATNOTESE010}
\begin{aligned}
\OO_D(x_1+x_2+\ldots+x_{18-2r}) &= \calN_{D/Y_1}\otimes \calN_{D/Y_2}
\\
&=\OO_D(-K_{Y_1})\otimes \OO_D(- K_{Y_2})\\
&= \OO_D(6A - 2E_1 - 2E_2 - \ldots - 2E_r),
\end{aligned}
\end{equation}
which is the only relation among $x_1$, $x_2$,\ldots, $x_{18-2r}$
for a general choice of ${\calX}$.

To find a rational curve in $|L|$ on the generic fiber $\calX_\eta$ of $\calX$, it suffices to locate a ``limiting rational curve''
$\Gamma$ in $|L|$ on $\calX_0$.

Suppose that $LD \ge 2$. By Corollary \ref{cor: log K3}, $V_{L_i,0,D,mp}\ne \emptyset$ for $p\in D$ general and $m = LD - 1$. And since $x_1$ is a general point on $D$, we can find
rational curves $\Gamma_i\in |L_i|$ for $i=1,2$ such that
\begin{equation}
\label{K3RATNOTESE017}
\Gamma_i . D = x_1 + m p
\end{equation}
on $Y_i$, $\Gamma_i$ is smooth at $p$ and the normalisation $\nu: \widehat{\Gamma}_i \to Y_i$ of $\Gamma_i$ induces an injection $\nu_*: T_{\widehat{\Gamma}_i} \to \nu^* T_{Y_i}$, i.e.,
$\nu$ is an immersion.

By Theorem \ref{thm: limiting rational curves}, $\Gamma = \Gamma_1\cup \Gamma_2$ can be deformed to a rational curve $\mathscr{C}_\eta$ on the generic fiber ${\calX}_\eta$ of $\calX$ after a finite base change. The normalisation
$\nu: \widehat{\mathscr{C}}_\eta\to {\calX}_\eta$ of $\mathscr{C}_\eta$ is an immersion since the same holds for $\Gamma_i$, the point $x_1\in \Gamma_1\cap \Gamma_2$ deforms to a node and
the point $p\in \Gamma_1\cap \Gamma_2$ deforms to $m-1$ nodes of $\mathscr{C}_\eta$.

If $LD = 1$, this only happens when $r = 8$ and $L_i = -K_{Y_i} = D$. For a general del~Pezzo surface $Y_i$ of degree $1$, there exists a nodal rational curve $\Gamma_i$ in $|-K_{Y_i}|$ that meets $D$ transversely at a unique point $p$. Then it is easy to see that $\Gamma = \Gamma_1\cup \Gamma_2$ can be deformed to a nodal rational curve $\mathscr{C}_\eta$ on the generic fiber ${\calX}_\eta$.

Suppose that $r\le 6$. By the Hodge Index Theorem, $LD \ge 3$.

If $LD \le 4$, it is easy to see that $(K_{Y_i} + L_i) L_i\le 0$. Namely, the arithmetic genus of $L_i$ is at most $1$.
Then a general member of $V_{L_i,0}$ must be nodal since
its normalisation is an immersion.
So there is a nodal rational curve in $|L_i|$ passing through
the $a = LD - 1$ general points $x_1,x_2,\ldots,x_a$ on $D$.
Thus, we may find $\Gamma = \Gamma_1\cup \Gamma_2$ such that $\Gamma_i$ are nodal rational curves in $|L_i|$ satisfying
\begin{equation}\label{K3RATNOTESE018}
\Gamma_i . D = x_1 + x_2 + \ldots + x_a + p
\end{equation}
on $Y_i$ for $i=1,2$. Then $\Gamma = \Gamma_1\cup \Gamma_2$ can be deformed to a nodal rational curve $\mathscr{C}_\eta$ on the generic fiber ${\calX}_\eta$.

If $L D \ge 5$, then by Theorem \ref{thm: log K3} and Corollary \ref{cor: log K3}, $V_{L_i,0,D,mp} \ne \emptyset$ and a general member of $V_{L_i,0,D,mp}$ is nodal for $m = LD - 4$ and a general point $p\in D$. And since
$x_1, x_2, x_3, x_4$ are four general points on $D$ by \eqref{K3RATNOTESE010}, we can find nodal rational curves
$\Gamma_i \in V_{L_i,0,D,mp}$ such that
\begin{equation}\label{K3RATNOTESE029}
\Gamma_i . D = x_1 + x_2 + x_3 + x_4 + m p
\end{equation}
on $Y_i$ for $i=1,2$. Then $\Gamma = \Gamma_1\cup \Gamma_2$ can be deformed to a nodal rational curve $\mathscr{C}_\eta$ on the generic fiber ${\calX}_\eta$.
\end{proof}

\begin{Theorem}
\label{K3NOTESTHM002}
Let $X$ be a general complex K3 surface with Picard lattice
\eqref{K3RATNOTESE019}
$$
\begin{bmatrix}
0 & 1\\
1 & -2\\
&& -2\\
&&& \ddots\\
&&&& -2
\end{bmatrix}_{(r+1)\times (r+1)}
$$
generated by effective divisors $A, E_1, E_2, \ldots, E_r$ for $r\leq 5$
and let $L$ be a big and nef divisor on $X$ satisfying
\begin{equation}\label{K3RATNOTESE050}
\left\{
\begin{aligned}
LA &\ge 3\\
L E_5 &\le 2 \hspace{12pt} \text{if } r = 5.
\end{aligned}
\right.
\end{equation}
Then there exists an integral nodal rational curve $\Gamma\in |L|$. Moreover, there exist integral nodal rational curves $P$ and $Q$ in $|A|$ and
$|4A + 2E_1 - E_2 - \ldots - E_r|$, respectively, such that
$\Gamma + P + Q$ has normal crossings on $X$.
\end{Theorem}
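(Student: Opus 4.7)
The plan is to adapt the strategy of Theorem~\ref{K3NOTESTHM001} to the odd-determinant case, replacing the del~Pezzo type~II degeneration used there by one tailored to the asymmetric lattice \eqref{K3RATNOTESE019}. The first step is to construct a flat proper family $\pi:\calX\to\Spec\CC[[t]]$ whose generic fibre is a general K3 with Picard lattice \eqref{K3RATNOTESE019} and whose central fibre $Y=Y_1\cup Y_2$ is the union of two smooth rational surfaces meeting transversely along a smooth anti-canonical elliptic curve $D$. Because $A^2=0$ and $A\cdot E_1=1$, the two halves must play different roles: one can take $Y_1=\FF_1$ blown up at $r-1$ general points of $D$, so that its negative section contributes to $E_{1,1}$ while its ruling gives $A_1$, together with $Y_2$ a suitable blowup of a Hirzebruch surface in which $E_{1,2}$ is a $(-1)$-class disjoint from the ruling class $A_2$. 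For a careful choice of blown-up points the gluing relations $\imath_1^*\alpha=\imath_2^*\alpha$ hold in $\Pic(D)$ for every $\alpha\in\{A,E_1,\ldots,E_r\}$, and since $K_{Y_1}^2+K_{Y_2}^2=18-2r\geq 8$ for $r\leq 5$, Theorems~\ref{K3RATNOTESTHM000}--\ref{thm: k3 type2 degeneration} produce the desired $\calX$ together with the $18-2r$ rational double points $x_1,\ldots,x_{18-2r}\in D$ subject only to \eqref{K3RATNOTESE000}.

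Writing $L=dA+m_1E_1-\sum_{j\geq 2}m_jE_j$, the nefness criterion of Remark~\ref{rem: lattice2} reads $d\geq 2m_1\geq 4m_j\geq 0$; the hypothesis $LA\geq 3$ translates to $m_1\geq 3$ (since $LA=m_1$), and when $r=5$ the hypothesis $LE_5\leq 2$ becomes $m_5\leq 1$ (since $LE_5=2m_5$). A direct intersection-theoretic check on the $(-1)$-curves of each $Y_i$ verifies that the restrictions $L_i:=L|_{Y_i}$ are big and nef with $L_iD\geq 3$. Corollary~\ref{cor: log K3}---or Corollary~\ref{cor: log k3 (-2)-tail} if $-K_{Y_i}$ is merely big and nef---then produces integral nodal rational curves $\Gamma_i\in V_{L_i,0,D,\alpha_i}$ passing transversely through a chosen subset of the $x_j$ with the remaining intersection concentrated at a single general point $p\in D$ having matching tangency on both sides. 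The conclusions (4)--(8) of Theorem~\ref{thm: log K3} then verify the transversality hypotheses \eqref{K3RATNOTESE025}--\eqref{K3RATNOTESE031} for the stable map $f:C\to Y$ with image $\Gamma_1\cup\Gamma_2$, and Theorem~\ref{thm: limiting rational curves} smooths this to the desired integral nodal rational curve $\Gamma\in|L|$ on $\calX_\eta$.

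Applying the same procedure to $A$ and to $Q=4A+2E_1-\sum_{j\geq 2}E_j$ yields integral nodal rational curves $P\in|A|$ and $\Gamma'\in|Q|$; alternatively, since $A^2=0$ and, when $r=5$, $Q^2=10-2r=0$, each class defines a genus-one pencil on $\calX_\eta$ and one may pick any of its nodal singular fibres in general position. For $r<5$ one has $Q^2>0$ and the log-K3 construction applies verbatim. Normal crossings of $\Gamma+P+\Gamma'$ then follow from the general-position statements in Theorem~\ref{thm: log K3}(4)--(5), combined with the freedom to move $P$ (and $\Gamma'$ when it lies in a pencil) in its linear system.

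I expect the main obstacle to be the explicit construction of the type~II degeneration in the first step. Unlike the diagonal lattice \eqref{K3RATNOTESE007}, the lattice \eqref{K3RATNOTESE019} is asymmetric and forces $Y_1$ and $Y_2$ to be genuinely different surfaces, and the Picard-rank bookkeeping---in particular accounting for an ``extra'' ruling class on the $Y_2$ side---requires the gluing relations to be chosen with care. In the tight case $r=5$ the hypotheses $LA\geq 3$ and $LE_5\leq 2$ are precisely what ensure that $L_i$ admits the decomposition into big-and-nef summands required to invoke Theorem~\ref{thm: log K3}, and verifying this in every subcase $0\leq r\leq 5$ for every admissible tuple $(d,m_1,\ldots,m_r)$ is where the proof becomes most delicate.
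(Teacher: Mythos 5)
Your overall architecture --- a type II degeneration $Y=Y_1\cup Y_2$, limiting curves built from Severi varieties $V_{\bullet,0,D,\alpha}$ on the two log K3 halves, smoothing via Theorem \ref{thm: limiting rational curves} --- is the same as the paper's. But there is a genuine gap at the central step: the claim that ``a direct intersection-theoretic check \ldots verifies that the restrictions $L_i:=L|_{Y_i}$ are big and nef'' is unproved, and it is exactly the point where the construction breaks. Write $L=dA+m_1E_1-\sum_{j\ge2}m_jE_j$ and let $A_i,E_{j,i}$ denote the restrictions to $Y_i$. Since $A^2=0$ forces $A_1^2+A_2^2=0$ and the integers $A_i\cdot(m_1E_{1,i}-\sum m_jE_{j,i})$ sum to $L\cdot A=m_1$, the leading terms of $L_1^2$ and $L_2^2$ in $d$ are controlled by how the intersection numbers $A\cdot E_j$ distribute over the two components; for $d\gg m_1$ (which the hypotheses permit) one cannot in general keep both restrictions big and nef, and in the paper's own degeneration ($Y_1$ a blowup of $\FF_2$ at $2r-2$ points paired on fibers, $Y_2=\PP^1\times\PP^1$) the restriction $L|_{Y_2}=(d-\sum_{j\ge2}m_j)A_2$ is a multiple of a ruling and is never big. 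The paper states this obstruction explicitly and resolves it with the one idea your proposal is missing: the \emph{twisted} linear series $|L+Y_1|$ on $\calX$, whose restrictions are $L_1-D$ on $Y_1$ and $L_2+D-\sum x_j$ on $Y_2$ (requiring the vanishing conditions at the double points $x_j$ developed at the end of Section \ref{sec: type II degeneration}). The twist transfers a copy of $-K_{Y_2}$, which is big, to the $Y_2$ side, and the hypothesis $LA\ge3$, i.e.\ $m_1\ge3$, is used precisely to keep $L_1-D$ effective and to produce the decomposition \eqref{K3RATNOTESE036} into classes with nonempty rational Severi varieties; in your write-up these hypotheses appear only as bookkeeping, which signals that the actual mechanism has been bypassed. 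As it stands, the second paragraph of your argument cannot be carried out: there is no limiting curve in $|L|$ itself on $Y$.

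Two secondary points. First, the family produced by Theorem \ref{thm: k3 type2 degeneration} from this non-projective $Y$ has generic fiber of Picard rank $r+2$ containing \eqref{K3RATNOTESE019} only as a primitive sublattice, so the bookkeeping of which classes remain effective is more delicate than in the even case. Second, for $r=5$ your treatment of $Q=4A+2E_1-\sum E_j$ with $Q^2=0$ as ``a nodal fibre of a genus-one pencil in general position'' is not enough to guarantee that $\Gamma+P+Q$ has normal crossings; the paper secures this by constructing $\Gamma$, $P$ and $Q$ simultaneously as limiting curves in the same degeneration, with their marked points on $D$ chosen among disjoint subsets of $\{x_1,\ldots,x_{18-2r}\}$ and away from the tangency point $p$. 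These are repairable, but the absence of the twisted linear series is not.
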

\begin{proof}
By the description of the nef cone of $X$ (see Remark \ref{rem: lattice2}) and \eqref{K3RATNOTESE050}, we have
\begin{equation}\label{K3RATNOTESE030}
\begin{aligned}
L &= d A + m_1 E_1 - m_2 E_2 - \ldots - m_r E_r
\\
&\text{for }
d,m,m_i\in \NN,\
d\ge 2m_1 \ge 4 \max_{2\le i\le r} m_i,\text{ and } m_1\ge 3\\
&\hspace{24pt} m_5 \le 1 \hspace{12pt} \text{if } r=5.
\end{aligned}
\end{equation}

We let $Y_1$ be a smooth projective rational surface with Picard lattice
$$
\begin{bmatrix}
0 & 1\\
1 & -2\\
&& -1\\
&&& -1\\
&&&& \ddots\\
&&&&& -1
\end{bmatrix}_{2r\times 2r}
$$
generated by effective divisors $A_1, B_1, G_1,G_2,\ldots,G_{2r-2}$ and let $D$ be a smooth anti-canonical curve on $Y_1$. We further require
\begin{equation}\label{K3RATNOTESE033}
\OO_D(A_1) = \OO_D(G_1 + G_2) = \OO_D(G_3 + G_4) = \ldots = \OO_D(G_{2r-3} + G_{2r-2}).
\end{equation}
Such $Y_1$ can be realised as the blowup of $\FF_2$ at $2r-2$ points $p_1,p_2,\ldots,p_{2r-2}$ such that
$p_{2i-1}$ and $p_{2i}$ lie on the same fiber of $\FF_2$ over $\PP^1$ for $i=1,2,\ldots,r-1$.

We let $Y_2\cong \PP^1\times \PP^1$ with $\Pic(Y_2)$
generated by two rulings $A_2, B_2$ and let $D$ be a smooth
anti-canonical curve on $Y_2$.

Let $Y=Y_1\cup Y_2$ be the union of $Y_1$ and $Y_2$ glued transversely along $D$ satisfying
\begin{equation}\label{K3RATNOTESE032}
\OO_D(\imath_1^* A_1) =
\OO_D(\imath_2^* A_2)
\end{equation}
where $\imath_i:D\hookrightarrow Y_i$ is the inclusion for $i=1,2$.

Note that such $Y=Y_1\cup Y_2$ is not projective. But we can deform it to a projective K3 surface
whose Picard lattice has rank $r+2$ and contains the lattice \eqref{K3RATNOTESE019} as a primitive sublattice. That is,
there exists a flat and proper (but non-projective) family $\pi: {\calX}\to \Spec \CC[[t]]$
of surfaces such that ${\calX}_0 = Y$ and the generic fiber $\calX_\eta$ of $\calX$ is
a K3 surface whose Picard lattice has rank $r+2$ and contains \eqref{K3RATNOTESE019} as a primitive sublattice. This follows from Theorem \ref{thm: k3 type2 degeneration}.

There are effective divisors $A, E_1, E_2, \ldots, E_r$ on $\calX$ such that
$$
\begin{aligned}
\OO_{Y_1}(A) &= \OO_{Y_1}(A_1),\ \OO_{Y_2}(A) = \OO_{Y_2}(A_2),\\
\OO_{Y_1}(E_1) &= \OO_{Y_1}(B_1),\ \OO_{Y_2}(E_1)= \OO_{Y_2}\\
\OO_{Y_1}(E_i) &= \OO_{Y_1}(G_{2i-3} + G_{2i-2}),\ \OO_{Y_2}(E_i) = \OO_{Y_2}(A_2)\text{ for }
i=2,\ldots,r
\end{aligned}
$$
The $3$-fold $\calX$ has $18 - 2r$ rational double points
$x_1$, $x_2$, \ldots, $x_{18-2r}$ on $D$ satisfying
\begin{equation}
\label{K3RATNOTESE035}
\begin{aligned}
\OO_D(x_1+x_2+\ldots+x_{18-2r}) &= \calN_{D/Y_1}\otimes \calN_{D/Y_2}
\\
&=
\OO_D(-K_{Y_1})\otimes \OO_D(- K_{Y_2})\\
&= \OO_D((7-r) A) \otimes \OO_D(2B_2),
\end{aligned}
\end{equation}
which is the only relation among $x_1$, $x_2$,\ldots, $x_{18-2r}$
for a general choice of ${\calX}$.

Let $L$ be the divisor on $\calX$ defined by \eqref{K3RATNOTESE030}. As before, to prove the existence of rational curves in $|L|$ on the generic fiber $\calX_\eta$, it suffices to find a limiting rational curve in $|L|$ on $\calX_0 = Y$. However, due to the fact that $L$ is not big when restricted to $Y_2$, we cannot construct such a curve in $|L|$ on $Y$. To overcome this, we need to work with the ``twisted'' linear series $|L + Y_1|$ on $\calX$.

As explained in Section \ref{sec: type II degeneration},
$\HH^0(Y, L + Y_1)$ is the kernel of the map
$$
\begin{tikzcd}[column sep=0pt]
\HH^0(\OO_{Y_1}(L_1 - D)) \oplus \HH^0(\OO_{Y_2}(L_2 + D-\sum x_j)) \ar{d}\\
\HH^0(\OO_D(L_2 + D -\sum x_j)) \ar[equal]{r}
& \HH^0(\OO_D(L_1 - D))
\end{tikzcd}
$$
sending $(\gamma_1,\gamma_2)$ to $\gamma_1 - \gamma_2$,
where $L_i$ are the restrictions of $L$ to $Y_i$ for $i=1,2$ and given explicitly by
$$
\begin{aligned}
L_1 &= L\Big|_{Y_1} = d A_1 + m_1 B_1 - \sum_{j=2}^r m_j (G_{2j-3} + G_{2j-2})\\
L_2 &= L\Big|_{Y_2} = (d-\sum_{j=2}^r m_j) A_2.
\end{aligned}
$$

By a direct computation, $h^0(\calX_\eta, L) = h^0(Y, L+Y_1)$. So every $(\gamma_1,\gamma_2)$ in $\HH^0(Y, L + Y_1)$ can be deformed to a section in $\HH^0(L)$ on the generic fiber $\calX_\eta$. It suffices to find a limiting rational curve $\Gamma\subset Y$ cut out by such $\gamma_i$.

Without loss of generality, let us assume that $m_2\ge \ldots\ge m_r$. Suppose that $m_i = 0$ for $i > a$ and $m_i > 0$ for $i \le a$.
We have
$$
\begin{aligned}
L_1 - D &= (d-4) A_1 + (m_1-2) B_1 - \sum_{j=2}^r (m_j-1) (G_{2j-3} + G_{2j-2})\\
&= M + \sum_{j=a+1}^r (G_{2j-3} + G_{2j-2})\\
L_2 + D &= (d+2-\sum_{j=2}^r m_j) A_2 + 2B_2
\\
\text{for } M &= (d-4) A_1 + (m_1-2) B_1 - \sum_{j=2}^a (m_j-1) (G_{2j-3} + G_{2j-2}).
\end{aligned}
$$
Since $d-4 \ge 2(m_1 -2)\ge 4(m_2-1)\ge \ldots\ge 4(m_a - 1)$ by \eqref{K3RATNOTESE030}, we can write
\begin{equation}
\label{K3RATNOTESE036}
\begin{aligned}
M &= (m_a - 1)(2A_1 + B_1 - \sum_{j=2}^a G_{2j-3})\\
&\quad +(m_a - 1)(2A_1 + B_1 - \sum_{j=2}^a G_{2j-2})\\
&\quad + \sum_{i=2}^{a-1} (m_i-m_{i+1}) (2A_1 + B_1 - \sum_{j=2}^i G_{2j-3})\\
&\quad + \sum_{i=2}^{a-1} (m_i-m_{i+1}) (2A_1 + B_1 - \sum_{j=2}^i G_{2j-2})\\
&\quad + (m_1-2m_2)(2 A_1 + B_1) + (d-2m_1)A_1
\end{aligned}
\end{equation}
and conclude that $V_{M,0} \ne \emptyset$. Similarly, $V_{L_2+D-cA_2,0}\ne \emptyset$ for all $c\le 4$.
We let
$$
\lambda = \min(4, MD -1) \text{ and }
m = MD - \lambda.
$$

If $MD \le 4$, it is easy to see by \eqref{K3RATNOTESE036} that
the arithmetic genus of $M$ is at most $1$. So a general member of $V_{M,0}$ is nodal and there exists a nodal rational curve in $|M|$ passing through $\lambda$ general points on $D$.
If $MD\ge 5$, by Corollary \ref{cor: log k3 (-2)-tail},
$$
V_{M,0,D,mp}\ne \emptyset
\text{ and }
V_{L_2+D-4A_2,0,D,mp}\ne \emptyset
$$
and general members of $V_{M,0,D,mp}$ and $V_{L_2+D-4A_2,0,D,mp}$ are nodal for $p\in D$ general. So we may find
$\Gamma\subset Y$ such that
$$
\begin{aligned}
\Gamma &= \Gamma_1 \cup \Gamma_2 \cup \ldots\cup \Gamma_\lambda\cup \Gamma_{\lambda+1}\cup \Gamma_{\lambda+2}\cup \Gamma_{\lambda+3},\\
\text{where } &\Gamma_1,\Gamma_2,\ldots,\Gamma_{\lambda}, \Gamma_{\lambda+2}\subset Y_2,
\ \Gamma_{\lambda+1}, \Gamma_{\lambda+3}\subset Y_1,\\
\Gamma_{\lambda+1} &\in V_{M,0,D,mp},\ \Gamma_1,\Gamma_2,\ldots,\Gamma_{\lambda}\in |A_2|\\
\Gamma_{\lambda+2} &\in V_{L_2 + D - \lambda A_2,0,D,mp}\\
\Gamma_{\lambda+3} &= G_{2a-1} \cup G_{2a-2}\cup \ldots \cup G_{2r-3} \cup G_{2r-2}
\\
\Gamma_1.D &= x_1 + y_1
\\
\Gamma_2.D &= x_2 + y_2
\\
\vdotswithin{\Gamma_2.D} &= \vdotswithin{x_2 + y_2}
\\
\Gamma_\lambda.D &= x_\lambda + y_\lambda\\
\Gamma_{\lambda+1}.D &= y_1 + y_2 + \ldots + y_\lambda + mp\\
\Gamma_{\lambda+2}.D &= mp + w_1 + w_2 + \ldots + w_{2r-2a} + x_{\lambda+1} + x_{\lambda+2}
+ \ldots + x_{18-2r}\\
\Gamma_{\lambda+3}.D &= w_1 + w_2 + \ldots + w_{2r-2a}.
\end{aligned}
$$
Here we choose $\Gamma_{\lambda+1}$ and $\Gamma_{\lambda+2}$ to be the general members of $V_{M,0,D,mp}$
and $V_{L_2+D-\lambda A_2,0,D,mp}$, respectively. So they are nodal, as explained above, in both cases $MD \le 4$ and $MD \ge 5$.
Therefore,
\begin{itemize}
\item $\Gamma_{\lambda+1} + \Gamma_{\lambda+3}$ and
$\Gamma_1 + \Gamma_2 + \ldots + \Gamma_{\lambda} + \Gamma_{\lambda+2}$
have normal crossings on $Y_i$,
\item
$\Gamma_{\lambda+1} + \Gamma_{\lambda+3}$ and
$\Gamma_1 + \Gamma_2 + \ldots + \Gamma_{\lambda} + \Gamma_{\lambda+2}$
meet $D$ transversely outside of $p$ on $Y_i$, and
\item
$\Gamma_{\lambda+1} + \Gamma_{\lambda+3}$ and
$\Gamma_1 + \Gamma_2 + \ldots + \Gamma_{\lambda} + \Gamma_{\lambda+2}$
have simple tangencies with $D$ at $p$ on $Y_i$ for $i=1,2$.
\end{itemize}

By Theorem \ref{thm: limiting rational curves},
we see that $\Gamma$ can be deformed to a nodal rational curve in $|L|$ on $\calX_\eta$.
To construct a nodal rational curve in $|A|$, we let
$$
\begin{aligned}
P &= P_1 \cup P_2,\ \text{where } P_1\subset Y_1, \ P_2\subset Y_2,\\
P_1 &\in |A_1|,\ P_2\in |A_2|\\
P_1.D &= x_{\lambda+1} + q
\\
P_2.D &= x_{\lambda+1} + q.
\end{aligned}
$$
Again by Theorem \ref{thm: limiting rational curves},
we see that $P$ can be deformed to a nodal rational curve in $|A|$ on $\calX_\eta$.
To construct a nodal rational curve in $|4A+2E_1 - E_2 - \ldots -E_r|$, we let
$$
\begin{aligned}
Q &= Q_1 \cup Q_2\cup \ldots\cup Q_{6-r},\ \text{where } Q_1\subset Y_1, \ Q_2, \ldots, Q_{6-r}\subset Y_2,\\
Q_1 &\in |4A_1+2B_1 - G_1 - \ldots - G_{2r-2}|,\ Q_2\in |A_2|\\
Q_1.D &= s_2 + s_3 + \ldots + s_{6-r}
\\
Q_2.D &= x_{\lambda+2} + s_2\\
Q_3.D &= x_{\lambda+3} + s_3\\
\vdotswithin{Q_2.D} &= \vdotswithin{x_{\lambda+2} + s_2}\\
Q_{6-r}.D &= x_{\lambda+6-r} + s_{6-r}
\end{aligned}
$$
where $Q_1$ is a nodal rational curve in $|4A_1+2B_1 - G_1 - \ldots - G_{2r-2}|$ passing through the general points $s_2,\ldots,s_{6-r}$.
Again by Theorem \ref{thm: limiting rational curves},
we see that $Q$ can be deformed to a nodal rational curve in $|4A+2E_1 - E_2 - \ldots - E_r|$ on $\calX_\eta$.

Also it is easy to check that $\Gamma + P + Q$ has normal crossings on $Y_1$ and $Y_2$, respectively, and $p\not\in P\cup Q$. So its deformation on $\calX_\eta$ has normal crossings as well.
\end{proof}

Now we have produced nodal rational curves on K3 surfaces with Picard lattices \eqref{K3RATNOTESE007}
and \eqref{K3RATNOTESE019}. Theorem \ref{thm: nodal curves} follows more or less easily. \\

\begin{specialproof}[Proof of Theorem \ref{thm: nodal curves} when $\det(\Lambda)$ is even]
Let $Y$ be a general K3 surface with Picard lattice \eqref{K3RATNOTESE007} for $r=6$.
By Lemma \ref{lem: lattice1}
and \ref{lem: lattice3}, we can find a primitive lattice embedding
$\sigma: \Lambda \hookrightarrow \Pic(Y)$ such that $\sigma(L)$ is big and nef on $Y$.
Then there is a nodal rational curve $C\in |\sigma(L)|$ by Theorem \ref{K3NOTESTHM001}.

There is a smooth proper family $\pi: \calX\to \Spec \CC[[t]]$ of K3 surfaces such that $\calX_0 = Y$, $\calX_\eta$ has Picard lattice $\Lambda$ and $L$ extends to a divisor $\calL$ on $\calX$ with $\calL_0 = \sigma(L)$. Then $C$ can be deformed to a nodal rational curve
in $|\calL|$ on the generic fiber $\calX_\eta$ of $\calX$.
\end{specialproof}\\

\begin{specialproof}[Proof of Theorem \ref{thm: nodal curves} when $\det(\Lambda)$ is odd]
We are going to prove the theorem under the hypothesis A2 or A3.
Let $Y$ be a general K3 surface with Picard lattice \eqref{K3RATNOTESE019} for some $r\le 5$.
In both cases A2 and A3, it suffices to find a primitive lattice embedding
$\sigma: \Lambda \hookrightarrow \Pic(Y)$ such that $\sigma(L)$ is big and nef on $Y$ and
\begin{equation}\label{K3RATNOTESE047}
\left\{\begin{aligned}
\sigma(L) . A &\ge 3\\
\sigma(L) . E_5 &\le 2\hspace{12pt} \text{if } r = 5
\end{aligned}\right.
\end{equation}
where $A, E_1, E_2,\ldots, E_r$ are the effective generators of $\Pic(Y)$ with intersection matrix \eqref{K3RATNOTESE019}.

Suppose that $L$ satisfies A2. By Lemmas \ref{lem: lattice2} and \ref{lem: lattice3}, there is a primitive lattice embedding $\sigma: \Lambda \hookrightarrow \Pic(Y)$ for $r = 4$ such that $\sigma(L)$ is big and nef on $Y$.
In this case, we have $L = L_1 + L_2 + L_3$ such that $LL_i > 0$ and $L_i^2 > 0$
for $i=1,2,3$. Let us write
\begin{equation}\label{K3RATNOTESE023}
\sigma(L) = \sigma(L_1) + \sigma(L_2) + \sigma(L_3) = M_1 + M_2 + M_3
\end{equation}
for $M_i = \sigma(L_i)$. We claim that $M_i A \ge 1$ for all nef divisors $A\ne 0$ on $Y$.

Since $\sigma(L)$ is nef and $\sigma(L) . M_i = LL_i > 0$, $h^2(M_i) = h^0(-M_i) = 0$. Therefore, by Riemann-Roch,
$$
h^0(M_i) = h^1(M_i) + \frac{M_i^2}2 + 2 \ge \frac{L_i^2}2 + 2 > 2.
$$
Hence the linear system $|M_i|$ has a nonzero moving part. Let $\Gamma$ be an irreducible component of
the moving part of $|M_i|$. Then
$$
M_i A \ge \Gamma A \ge 0.
$$
If $\Gamma A > 0$, then $M_i A > 0$ follows. Otherwise, $\Gamma A = 0$. And since both $\Gamma$ and $A$ are nef, $A^2 = 0$ and
$\Gamma$ is numerically equivalent to $aA$ for some $a\in \QQ^+$ by the Hodge Index Theorem. This holds for all components $\Gamma$ of the moving part of $|M_i|$. So we have
$$
M_i \equiv a A + F
$$
where $F$ is the fixed part of $|M_i|$. If $F A > 0$, we again have $M_i A > 0$. Otherwise,
$FA = 0$; then
$$
F^2 = a^2 A^2 + 2aAF + F^2 = (aA + F)^2 = M_i^2 > 0.
$$
Since $F$ is effective and $F^2 > 0$, we again have $h^0(F) > 2$ by Riemann-Roch. This contradicts the fact that $F$ is the fixed part of $|M_i|$.

In conclusion, $M_i A \ge 1$ for all nef divisors $A\ne 0$ on $X$ and $i=1,2,3$. By \eqref{K3RATNOTESE023},
$\sigma(L) . A \ge 3$. This proves \eqref{K3RATNOTESE047} for case A2.

Suppose that $L$ satisfies A3. In this case, $L = L_1 + L_2$ such that $LL_i > 0$, $L_1^2 > 0$,
$L_2^2 = -2$, $L_1 \not\in 2\Lambda$, $L_1 - L_2 \not\in n\Lambda$ for all $n\in \ZZ$ and $n\ge 2$, and
\begin{equation}\label{K3RATNOTESE048}
L_1^2 + 2L_1L_2 \ge 18 \Leftrightarrow a + b \ge 9
\end{equation}
where we let $L_1^2 = 2a$ and $L_1 L_2 = b$.

Let us first assume that $\Lambda = \ZZ L_1 \oplus \ZZ L_2$.
In this case, we will use the numerical condition \eqref{K3RATNOTESE048} to explicitly construct a primitive embedding $\sigma: \Lambda\hookrightarrow \Pic(Y)$ for $r=5$ such that
$\sigma(L)$ is a big and nef divisor on $Y$ satisfying \eqref{K3RATNOTESE047}.

When $b \equiv 0\ (\text{mod } 3)$, we let
$$\left\{
\begin{aligned}
\sigma(L_1) &= \frac{a+9+\delta}3 A + 3 E_1 - \sum_{i=3}^{\delta+2} E_i,\ \text{for }
\delta = 3 + 3\left\lfloor\frac{a}3\right\rfloor - a
\\
\sigma(L_2) &= \frac{b}3 A - E_2
\end{aligned}
\right.
$$

When $b \equiv 1\ (\text{mod } 3)$, we let
$$\left\{
\begin{aligned}
\sigma(L_1) &= \frac{a+13+\delta}3 A + 3 E_1 - 2E_2 - \sum_{i=3}^{\delta+2} E_i,\ \text{for }
\delta = 2 + 3\left\lfloor\frac{a+1}3\right\rfloor - a
\\
\sigma(L_2) &= \frac{b-4}3 A + E_2
\end{aligned}
\right.
$$

When $b \equiv 2\ (\text{mod } 3)$, we let
$$\left\{
\begin{aligned}
\sigma(L_1) &= \frac{a+10+\delta}3 A + 3 E_1 - E_2 - \sum_{i=3}^{\delta+2} E_i,\ \text{for }
\delta = 2 + 3\left\lfloor\frac{a+1}3\right\rfloor - a
\\
\sigma(L_2) &= \frac{b-2}3 A + E_2
\end{aligned}
\right.
$$

It is easy to check that $\sigma(L) = \sigma(L_1 + L_2)$ is big and nef divisor on $Y$ satisfying \eqref{K3RATNOTESE047}. This settles the case $\Lambda = \ZZ L_1 \oplus \ZZ L_2$.

Assume now that
\begin{equation}\label{K3RATNOTESE049}
\Lambda \ne \ZZ L_1 \oplus \ZZ L_2.
\end{equation}
Let $\sigma: \Lambda \hookrightarrow \Pic(Y)$
be a primitive lattice embedding for $r = 4$ such that $\sigma(L)$ is big and nef on $Y$.
It suffices to prove $\sigma(L) . A \ge 3$.
We write
$$
\sigma(L) = \sigma(L_1 + L_2) = M_1 + M_2
$$
for $M_i = \sigma(L_i)$.
Since $L L_1 > 0$ and $L_1^2 > 0$, $M_1 A \ge 1$ by the same argument as before.
Since $LL_2 > 0$ and $L_2^2 = -2$, $M_2$ is effective by Riemann-Roch. So $M_2 A \ge 0$.
If $M_1 A + M_2 A \ge 3$, $\sigma(L) . A \ge 3$ and we are done. Otherwise, we have three cases:
\begin{itemize}
    \item $M_1 A = 1$ and $M_2 A = 0$;
    \item $M_1 A = 2$ and $M_2 A = 0$;
    \item $M_1 A = M_2 A = 1$.
\end{itemize}
We will show that none of these cases are possible.

Suppose that $M_1 A = 1$ and $M_2 A = 0$. Since $M_2$ is effective, $M_2^2 = -2$ and $M_2A = 0$, we necessarily have $M_2 = m A \pm E_j$
for some $2\le j \le 4$. And since $M_1 A = 1$,
it easy to see that $M_1$ and $M_2$ generate a primitive sublattice of
$\Pic(Y)$. Then $L_1$ and $L_2$ generate $\Lambda$, contradicting \eqref{K3RATNOTESE049}.

Suppose that $M_1 A = 2$ and $M_2 A = 0$. Again we have $M_2 = m A \pm E_j$. Then one of following must hold:
\begin{enumerate}
    \item $M_1$ and $M_2$ generate a primitive sublattice of
    $\Pic(Y)$,
    \item $M_1 = 2D$ for some $D\in \Pic(Y)$, or
    \item $M_1 - M_2 = 2D$ for some $D\in \Pic(Y)$.
\end{enumerate}
As pointed out above, the first case is equivalent to $\Lambda = \ZZ L_1 \oplus \ZZ L_2$,
contradicting \eqref{K3RATNOTESE049}. The second and third cases are equivalent to $L_1\in 2\Lambda$ and
$L_1 - L_2\in 2\Lambda$, respectively, both contradicting our hypotheses on $L_i$.

Suppose that $M_1 A = M_2 A = 1$. We have
\begin{enumerate}
    \item either $M_1$ and $M_2$ generate a primitive sublattice of
    $\Pic(Y)$ or
    \item $M_1 - M_2 = nD$ for some $D\in \Pic(Y)$, $n\in \ZZ$ and $n\ge 2$.
\end{enumerate}
Again the former contradicts \eqref{K3RATNOTESE049} and the latter is equivalent to
$L_1 - L_2\in n\Lambda$, contradicting our hypotheses on $L_i$.
This finishes the argument for case A3.

In conclusion, we can find a primitive embedding $\sigma:\Lambda\hookrightarrow \Pic(Y)$ such that
$\sigma(L)$ satisfies the hypotheses of Theorem \ref{K3NOTESTHM002}. So there is a nodal rational curve $C\in |\sigma(L)|$ on $Y$.

There is a smooth proper family $\pi: \calX\to \Spec \CC[[t]]$ of K3 surfaces such that $\calX_0 = Y$, $\calX_\eta$ has Picard lattice $\Lambda$ and $L$ extends to a divisor $\calL$ on $\calX$ with $\calL_0 = \sigma(L)$. Then $C$ can be deformed to a nodal rational curve
in $|\calL|$ on the generic fiber $\calX_\eta$ of $\calX$.
\end{specialproof}

\subsection{Higher rank lattices}

It is natural to expect the above techniques to apply to various lattice of higher rank, for the purposes of
\cite{regenerationinfinite} however, we will carry this out for the following two specific rank four lattices.

\begin{Theorem}\label{thm: curves on complex k3 picard rank 4}
Let $\Lambda$ be one of the following lattices of rank 4:
\begin{equation}\label{eq: curves on complex k3 picard rank 4 lattice 1}
\begin{bmatrix}
2 & -1 & -1 & -1 \\
-1 & -2 &  0 &  0 \\
-1 &  0 & -2 &  0 \\
-1 &  0 &  0 & -2 \\
\end{bmatrix}
\end{equation}
\begin{equation}\label{eq: curves on complex k3 picard rank 4 lattice 2}
\begin{bmatrix}
12 & -2 &  0 & 0 \\
-2 & -2 & -1 & 0 \\
0 & -1 & -2 & -1 \\
0 &  0 & -1 & -2 \\
\end{bmatrix}.
\end{equation}
Then for a general K3 surface $X$ with $\Pic(X) = \Lambda$, there is an integral rational (resp.\ geometric genus 1) curve in $|L|$ if
$L$ is a big and nef divisor $L$ on $X$ with the property that
\begin{equation}\label{eq: curves on complex k3 picard rank 4 00}
\begin{aligned}
L = L_1 + L_2 + L_3 \text{ for some }& L_i\in \Lambda \text{ satisfying that}\\
    &LL_i > 0 \text{ and } L_i^2 > 0 \text{ for } i=1,2,3.
\end{aligned}
\end{equation}
\end{Theorem}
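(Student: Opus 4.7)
The proof will mirror the argument for Theorem \ref{thm: nodal curves}, consisting of two successive degenerations. For each of the two lattices $\Lambda$ I would first produce a primitive embedding $\sigma: \Lambda \hookrightarrow \Sigma$ into one of the Picard lattices \eqref{K3RATNOTESE007} or \eqref{K3RATNOTESE019} from the previous subsection, chosen so that $\sigma(L)$ is big and nef and satisfies the hypotheses of either Theorem \ref{K3NOTESTHM001} or Theorem \ref{K3NOTESTHM002}. A general K3 surface $Y$ with $\Pic(Y) = \Sigma$ then carries a nodal rational curve $C \in |\sigma(L)|$, and a smooth deformation of $Y$ to a K3 surface $X$ with $\Pic(X) = \Lambda$ (preserving $\sigma(L)$, which on the generic fibre becomes $L$) together with the immersion property in Theorem \ref{K3NOTESTHM001} produces the desired integral nodal rational curve in $|L|$.

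The crux is the first step: producing a primitive lattice embedding. For each of \eqref{eq: curves on complex k3 picard rank 4 lattice 1} and \eqref{eq: curves on complex k3 picard rank 4 lattice 2}, one computes the discriminant and 2-adic invariants and selects the target accordingly: lattice \eqref{eq: curves on complex k3 picard rank 4 lattice 1} has even discriminant and should embed primitively into \eqref{K3RATNOTESE007} for some $r \le 7$, while \eqref{eq: curves on complex k3 picard rank 4 lattice 2} has $\det = 12$ (even) and is again a candidate for \eqref{K3RATNOTESE007}; if either happens to have odd discriminant after a basis change one targets \eqref{K3RATNOTESE019} instead. The embeddings are constructed by expressing generators of $\Lambda$ as explicit integer combinations of $A, E_1,\ldots, E_r$, using the Lagrange--Legendre--Dirichlet theorem to solve the resulting sum-of-squares norm equations and enforcing primitivity through the gcd condition, exactly as in Lemmas \ref{lem: lattice1} and \ref{lem: lattice2}. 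Since rank $4$ leaves more freedom in a target of rank $r+1 \le 8$ than rank $2$ did, the number-theoretic constraints are looser and the embedding should exist for moderate $r$; this is the main technical obstacle and must be carried out case-by-case for each of the two lattices.

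Next, I apply Lemma \ref{lem: lattice3} to precompose $\sigma$ with a sequence of Picard--Lefschetz reflections so that $\sigma(L)$ is big and nef on $Y$. Under hypothesis \eqref{eq: curves on complex k3 picard rank 4 00}, $L = L_1 + L_2 + L_3$ with $L L_i > 0$ and $L_i^2 > 0$. Repeating the argument of case A2 in the proof of Theorem \ref{thm: nodal curves} verbatim, one shows $\sigma(L_i) \cdot N \ge 1$ for every nonzero nef $N$ on $Y$ (via Riemann--Roch applied to $\sigma(L_i)$, the nontriviality of the moving part of $|\sigma(L_i)|$, and the Hodge Index Theorem), whence $\sigma(L) \cdot N \ge 3$. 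This is automatic for Theorem \ref{K3NOTESTHM001}; in the case that $\sigma$ targets \eqref{K3RATNOTESE019}, it supplies exactly the hypothesis \eqref{K3RATNOTESE050}, and the auxiliary condition $\sigma(L) \cdot E_5 \le 2$ can be arranged by further composing $\sigma$ with an element of $\Aut(\Sigma)$ acting on the $E_i$.

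Finally, taking a smoothing $\calX \to \Spec \CC[[t]]$ with $\calX_0 = Y$ and $\calX_\eta$ a K3 of Picard lattice $\Lambda$ (which exists because $\Lambda \hookrightarrow \Sigma$ is primitive and the orthogonal complement has signature $(0, r-3)$), the line bundle $\sigma(L)$ extends to $\calL \in \Pic(\calX)$ and the nodal rational curve $C \subset Y$ obtained from Theorem \ref{K3NOTESTHM001} or \ref{K3NOTESTHM002} deforms to a nodal rational curve on $\calX_\eta$, by the immersion condition \eqref{K3RATNOTESE046} and the standard deformation theory at the end of Section \ref{sec: type II degeneration}. For the ``resp.\ geometric genus $1$'' variant one instead builds a limiting stable map $f:C \to Y$ of arithmetic genus $1$ by replacing one of the two smooth components of the limiting rational curve constructed in the proof of Theorem \ref{K3NOTESTHM001} by a smooth genus-$1$ curve (of the same class, whose existence on each del~Pezzo half is standard), and applies Theorem \ref{thm: limiting rational curves} with $g = 1$. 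The lattice-embedding step remains the only genuinely new ingredient; everything else reduces mechanically to results proved earlier in the paper.
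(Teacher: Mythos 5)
Your overall two-step strategy (primitive embedding into one of the auxiliary lattices, then two degenerations) is the paper's, but two of your concrete choices fail. First, the target lattice: neither rank-4 lattice can be embedded into \eqref{K3RATNOTESE007} at all, primitively or otherwise, because the Gram matrix of \eqref{K3RATNOTESE007} is twice an integer matrix, so \emph{every} pairing of two vectors in it is even, whereas \eqref{eq: curves on complex k3 picard rank 4 lattice 1} has $B\cdot C_1=-1$ and \eqref{eq: curves on complex k3 picard rank 4 lattice 2} has $C_1\cdot C_2=-1$. The determinant-parity dichotomy you invoke is special to rank two (where even determinant is equivalent to all pairings being even) and does not transfer; incidentally the second lattice has determinant $-60$, not $12$. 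The paper embeds \emph{both} lattices into \eqref{K3RATNOTESE019}, with $r=4$ for the first and $r=5$ for the second, by explicit formulas (e.g.\ $\sigma(B)=2A+E_1$, $\sigma(C_i)=-A+E_{i+1}$ in the first case), and no Lagrange--Legendre input is needed.

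Second, and more seriously: once you are forced into $r=5$ for the second lattice, the side condition $\sigma(L)\cdot E_5\le 2$ of Theorem \ref{K3NOTESTHM002} can genuinely fail --- e.g.\ $\calL_0=8A+4E_1-2\sum_{i=2}^5 E_i$ is big and nef with all $\calL_0 E_i=4$ --- and composing with an automorphism permuting the $E_i$ only permutes the values $\calL_0E_i$, so it cannot repair this. The bulk of the paper's proof is devoted to exactly this case: it writes $\calL_0=P+(m_5-1)F$, or $(d-4m_5)A+m_5F$ when $m_1=2m_5$, with $F=4A+2E_1-\sum_{i=2}^5E_i$ isotropic, builds a chain stable map out of the normalisations of an integral curve in $|P|$ and of the nodal rational curves in $|F|$ and $|A|$ glued at nodes, and then uses that neither $A$ nor $F$ survives to $\Pic(\calX_\eta)$ (the generic fibre is not elliptic) to force the deformed stable map to have integral image. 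None of this appears in your proposal, and your claim that everything reduces mechanically to earlier results is false here. A smaller gap: for the geometric genus $1$ assertion you propose to run Theorem \ref{thm: limiting rational curves} with $g=1$, but that requires genus-$1$ curves with prescribed tangencies on the rational surface pieces, which the genus-$0$ Theorem \ref{thm: log K3} does not supply; the paper instead smooths a node, or in the last subcase exhibits a two-branch singular point of $f(\mathscr{C}_\eta)$ and cites a deformation lemma from \cite{regenerationinfinite}.
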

\begin{proof}
We claim that there is a primitive embedding $\sigma: \Lambda\hookrightarrow\Sigma_r$ for $\Sigma_r$ given by
\eqref{K3RATNOTESE019} and $r\le 5$.

When $\Lambda$ is \eqref{eq: curves on complex k3 picard rank 4 lattice 1}, we let $r=4$ and
$$
\begin{aligned}
\sigma(B) &= 2A+E_1 \\
\sigma(C_1) &= -A+E_2 \\
\sigma(C_2) &= -A+E_3 \\
\sigma(C_3) &= -A+E_4
\end{aligned}
$$
where $\{B, C_1, C_2, C_3\}$ and $\{A, E_1,\ldots,E_r\}$ are the bases of $\Lambda$ and $\Sigma_r$, respectively,
with the corresponding intersection matrices.

When $\Lambda$ is \eqref{eq: curves on complex k3 picard rank 4 lattice 2}, we let $r=5$ and
$$
\begin{aligned}
\sigma(B) &= 12 A + 6 E_1 - 4E_2 - 3E_3 - 2E_4 - E_5 \\
\sigma(C_1) &= A - E_2 \\
\sigma(C_2) &= -E_1 \\
\sigma(C_3) &= A - E_3.
\end{aligned}
$$
This proves our claim.
So there exists a flat proper family $\pi: \calX\to \Spec\CC[[t]]$ of
K3 surfaces such that $\calX_0$ is a general K3 surface with Picard lattice $\Sigma_r$, $\calX_\eta$ is a K3 surface with Picard lattice $\Lambda$ and there is a divisor $\calL$ on $\calX$ with $\calL_0 = \sigma(L)$.

We may choose $\sigma$ such that $\calL_0 = \sigma(L)$ is big and nef on $\calX_0$. By the same argument as in the proof of
Theorem \ref{thm: nodal curves}, we can show that
$$
\sigma(L_i) . A \ge 1 \Rightarrow \sigma(L) . A \ge 3
\Leftrightarrow \calL_0 A \ge 3
$$
on $\calX_0$
by \eqref{eq: curves on complex k3 picard rank 4 00}.

When $\Lambda$ is \eqref{eq: curves on complex k3 picard rank 4 lattice 1},
$r=4$. Then the existence of nodal rational curves in
$|\calL_0|$ on $\calX_0$ is directly given by Theorem
\ref{K3NOTESTHM002}. Therefore, there are integral rational (resp.\ geometric genus 1) curves
in $|\calL|$ on $\calX_\eta$.

When $\Lambda$ is \eqref{eq: curves on complex k3 picard rank 4 lattice 2},
$r=5$. Theorem \ref{K3NOTESTHM002} only gives the existence of nodal rational curves in $|\calL_0|$ if $\calL_0$ additionally satisfies
$$
\min_{2\le i \le 5} \calL_0 E_i \le 2.
$$
So some extra work is needed. Suppose that
$$
\calL_0 = d A + m_1 E_1 - \sum_{i=2}^5 m_i E_i
$$
for some $d,m_i\in \ZZ$.
Without loss of generality, let us assume that
$m_2\ge m_3\ge m_4\ge m_5$. Since $\calL_0$ is nef and $\calL_0 A\ge 3$, we have
$$
d \ge 2m_1 \ge 4m_2\ge 4m_3\ge 4m_4\ge 4m_5\ge 0 \hspace{12pt}
\text{and} \hspace{12pt} m_1\ge 3.
$$
If $m_5 \le 1$, then there is a nodal rational curve in $|\calL_0|$ by Theorem \ref{K3NOTESTHM002} and we are done. Let us assume that $m_5\ge 2$.

We write
\begin{equation}\label{eq: curves on complex k3 picard rank 4 01}
\begin{aligned}
\calL_0 &= (d-4m_5+4) A + (m_1 - 2m_5+2) E_1 - \sum_{i=2}^5
(m_i - m_5 + 1) E_i\\
&\quad + (m_5-1) (4A + 2E_1 - \sum_{i=2}^5 E_i) = P + (m_5-1) F
\end{aligned}
\end{equation}
if $m_1-2m_5\ge 1$ and
\begin{equation}\label{eq: curves on complex k3 picard rank 4 02}
\begin{aligned}
\calL_0 &= (d-4m_5) A + m_5 (4A + 2E_1 - \sum_{i=2}^5 E_i)\\
&= (d-4m_5) A + m_5 F
\end{aligned}
\end{equation}
if $m_1 = 2m_5$, which implies $m_1 = 2m_2 = 2m_3=2m_4=2m_5$.

Suppose that $m_1-2m_5\ge 1$. That is, we have \eqref{eq: curves on complex k3 picard rank 4 01}. Since $P$ is big and
nef and $PA \ge 3$, there exists an integral rational (resp.\ geometric genus 1) curve $\Gamma\in |P|$ by Theorem \ref{K3NOTESTHM002}. There is also an integral nodal rational curve
$R\in |F|$ such that $\Gamma$ and $R$ meet transversely. As $F^2=0$, $R$ has a unique node $q$. Let
$$
\begin{tikzcd}
\widehat{\Gamma} \cup R_1 \cup R_2\cup \ldots \cup R_{m_5-1}\ar{r}{f} &\calX_0
\end{tikzcd}
$$
be a stable map given as follows:
\begin{itemize}
    \item $f: \widehat{\Gamma}\to \Gamma$ and $f: R_i\to R$ are the normalisations of $\Gamma$ and $R$, respectively, for $i=1,2,\ldots,m_5-1$;
    \item $\widehat{\Gamma}$ and $R_1$ meet at one point, $R_i$ and $R_{i+1}$ meet at one point for $i = 1,2,\ldots,m_5-2$ and there are no other intersections among $\Gamma$ and $R_i$;
    \item $f$ maps the point $\widehat{\Gamma}\cap R_1$
    to one of the intersections in $\Gamma\cap R$ and it is a local isomorphism at $\widehat{\Gamma}\cap R_1$ onto its image;
    \item $f$ maps the point $R_i\cap R_{i+1}$ to the node $q$ of $R$ and it is a local isomorphism at $R_i\cap R_{i+1}$ onto its image for $i=1,2,\ldots,m_5-2$.
\end{itemize}
By a local isomorphism at $\widehat{\Gamma}\cap R_1$ and $R_i\cap R_{i+1}$, we mean that $f$ maps an \'etale/analytic/formal neighborhood of the point
on the curve isomorphically onto its image.

It is clear that $f$ deforms in the expected dimension on $\calX_0$.
So it deforms to $\calX_\eta$. On the other hand, the divisor class $F$ does not deform in the family $\calX$ over $\Spec \CC[[t]]$ since $\calX_\eta$ is not elliptic. Therefore, $f$ extends to a family of stable maps to $\calX$ over $\Spec \CC[[t]]$, still denoted by $f: \mathscr{C}\to \calX$, such that
$\mathscr{C}_\eta$ is smooth and $f_* \mathscr{C}_\eta$ is an integral rational (resp.\ geometric genus 1) curve on $\calX_\eta$. We are done.

Suppose that $m_1=2m_5$. That is, we have \eqref{eq: curves on complex k3 picard rank 4 02}. There is a nodal rational curve $D\in |A|$ such that
$D$ and $R$ meet transversely at two points.
Clearly, $D$ has a unique node $p$. Let
$$
\begin{tikzcd}
D_1\cup D_2 \cup \ldots \cup D_{d-4m_5} \cup R_1 \cup R_2\cup \ldots \cup R_{m_5}\ar{r}{f} &\calX_0
\end{tikzcd}
$$
be a stable map given as follows:
\begin{itemize}
    \item $f: D_i\to D$ and $f: R_j\to R$ are the normalisations of $D$ and $R$, respectively, for $i=1,2,\ldots,d-4m_5$
    and $j=1,2,\ldots,m_5$;
    \item $D_i$ and $D_{i+1}$ meet at one point,
    $D_{d-4m_5}$ and $R_1$ meet at one point,
    $R_j$ and $R_{j+1}$ meet at one point for $1\le i \le d-4m_5-1$ and
    $1\le j\le m_5-1$, and
    there are no other intersections among $D_i$ and $R_j$;
    \item $f$ maps the point $D_{d-4m_5}\cap R_1$
    to one of the intersections in $D\cap R$ and it is a local isomorphism at $D_{d-4m_5}\cap R_1$ onto its image;
    \item $f$ maps the point $D_i\cap D_{i+1}$ to the node $p$ of $D$ and it is a local isomorphism at $D_i\cap D_{i+1}$ onto its image for $i=1,2,\ldots,d-4m_5-1$;
    \item $f$ maps the point $R_j\cap R_{j+1}$ to the node $q$ of $R$ and it is a local isomorphism at $R_j\cap R_{j+1}$ onto its image for $j=1,2,\ldots,m_5-1$.
\end{itemize}

Again $f$ deforms in the expected dimension on $\calX_0$. So it deforms to $\calX_\eta$. On the other hand, neither $A$ nor $F$ deforms in the family $\calX$ over $\Spec \CC[[t]]$ since $\calX_\eta$ is not elliptic. Therefore, $f$ extends to a family of stable maps to $\calX$ over $\Spec \CC[[t]]$, still denoted by $f: \mathscr{C}\to \calX$, such that
$\mathscr{C}_\eta$ is smooth and $f_* \mathscr{C}_\eta$ is an integral rational curve on $\calX_\eta$.

To see that there is also an integral geometric genus 1 curve in $|\calL|$ on $\calX_\eta$, we let
$s\in D\cap R$ be the intersection such that
$s\ne f(D_{d-4m_5} \cap R_1)$. Obviously, there are points
$s'\in D_{d-4m_5}$ and $s''\in R_1$ such that
$f(s') = f(s'') = s$. Therefore, $f(\mathscr{C}_\eta)$ has a singularity where it has two branches. Then it is well
known that $f(\mathscr{C}_\eta)$ can be deformed to an integral genus 1 curve on $\calX_\eta$ (see e.g.,\
\cite[Lemma 6.5]{regenerationinfinite}) so we are done.
\end{proof}

\bibliographystyle{alpha}
\newcommand{\etalchar}[1]{$^{#1}$}

\end{document}